\numberwithin{equation}{section}
\newcommand{\beq}{\begin{equation}}
\newcommand{\eeq}{\end{equation}}
\newcommand{\RR}{\ensuremath{\mathbb{R}}}
\newcommand{\bpm}{\begin{pmatrix}}
\newcommand{\epm}{\end{pmatrix}}
\newcommand{\CalB}{\mathcal{B}}
\newcommand{\CalX}{\mathcal{X}}
\newcommand{\Cal}{\mathcal}
\def\eps{\varepsilon }
\def\e{\varepsilon}
\newcommand\R{\mathbb R}
\newcommand\C{\mathbb C}
\newcommand\Z{\mathbb Z}
\newcommand\Zzer{\mathbb Z\setminus\{0\}}
\def\eps{\varepsilon}
\def\e{\varepsilon}
\def\ta{\tilde{a}}
\def\tmu{\tilde{\mu}}
\newcommand\br{\begin{remark}}
\newcommand\er{\end{remark}}
\newcommand\bp{\begin{pmatrix}}
\newcommand\ep{\end{pmatrix}}
\newcommand{\be}{\begin{equation}}
\newcommand{\ee}{\end{equation}}
\newcommand\ba{\begin{equation}\begin{aligned}}
\newcommand\ea{\end{aligned}\end{equation}}
\newcommand{\bap}{\begin{app}}
\newcommand{\eap}{\end{app}}
\newcommand{\begs}{\begin{exams}}
\newcommand{\eegs}{\end{exams}}
\newcommand{\beg}{\begin{example}}
\newcommand{\eeg}{\end{exaplem}}
\newcommand{\bpr}{\begin{proposition}}
\newcommand{\epr}{\end{proposition}}
\newcommand{\bt}{\begin{theorem}}
\newcommand{\et}{\end{theorem}}
\newcommand{\bc}{\begin{corollary}}
\newcommand{\ec}{\end{corollary}}
\newcommand{\bl}{\begin{lemma}}
\newcommand{\el}{\end{lemma}}
\newcommand{\bd}{\begin{definition}}
\newcommand{\ed}{\end{definition}}
\newcommand{\brs}{\begin{remarks}}
\newcommand{\ers}{\end{remarks}}
\newcommand{\CalE}{\mathcal{E}}
\newcommand{\CalF}{\mathcal{F}}
\newcommand{\CalN}{\mathcal{N}}
\newcommand{\ZZ}{{\mathbb Z}}
\newcommand{\CC}{{\mathbb C}}
\newcommand{\BbbA}{{\mathbb A}}
\newcommand{\BB}{{\mathbb B}}
\newcommand{\const}{\text{\rm constant}}
\newcommand{\Id}{{\rm Id }}
\newcommand{\Range}{{\rm Range }}
\newcommand{\sgn}{\text{\rm sgn}}
\newcommand{\rmi}{{\mathrm{i}}}
\newtheorem{theorem}{Theorem}[section]
\newtheorem{proposition}[theorem]{Proposition}
\newtheorem{corollary}[theorem]{Corollary}
\newtheorem{lemma}[theorem]{Lemma}
\theoremstyle{remark}
\newtheorem{remark}[theorem]{Remark}
\theoremstyle{definition}
\newtheorem{definition}[theorem]{Definition}
\newtheorem{example}[theorem]{Example}
\newtheorem{hypothesis}{Hypothesis}
\newcommand\tL{{L^\Pi}}
\newcommand\pL{{L^{\mathrm{pos}}}}
\newcommand\nL{{L^{\mathrm{neg}}}}
\newcommand\nLk{{L_k^{\mathrm{neg}}}}
\newcommand\nLze{{L_0^{\mathrm{neg}}}}
\newcommand\nZ{{\Z^{\mathrm{neg}}}}
\newcommand{\dom}{\text{\rm{dom}}}
\newcommand{\bb}{\mathbf{b}}
\newcommand{\bu}{\mathbf{u}}
\newcommand{\bv}{\mathbf{v}}
\newcommand{\bz}{\mathbf{z}}
\newcommand{\bF}{\mathbf{F}}
\newcommand{\tbv}{\tilde{\mathbf{v}}}
\newcommand{\bwe}{\mathbf{w}^\varepsilon}
\newcommand{\tbwe}{\tilde{\mathbf{w}}^\varepsilon}
\newcommand{\tS}{\tilde{\Sigma}}
\newcommand{\ale}{\alpha(\varepsilon)}
\newcommand{\oA}{\overline{A}}
\newcommand{\oB}{\overline{B}}
\renewcommand{\Re}{\mathrm{Re}}
\renewcommand{\Im}{\mathrm{Im}}
\title{
O(2) Hopf bifurcation of viscous\\
shock waves in a channel
}
\author{ Alin Pogan}
\address{ Indiana University, Bloomington, IN 47405}
\email{apogan@indiana.edu}
\thanks{Research of A.P. was partially supported under
NSF grant no. DMS-0300487}
\author{Jinghua Yao}
\address{ Indiana University, Bloomington, IN 47405}
\email{yaoj@indiana.edu}
\thanks{Research of J.Y. was partially supported under
NSF grant no. DMS-0300487}
\author{Kevin Zumbrun}
\address{Indiana University, Bloomington, IN 47405}
\email{kzumbrun@indiana.edu}
\thanks{Research of K.Z. was partially supported
under NSF grant no. DMS-0300487}
\begin{document}

\begin{abstract}
Extending work of Texier and Zumbrun in the semilinear non-reflection
symmetric case, we study $O(2)$ transverse Hopf bifurcation,
or ``cellular instability,''
of viscous shock waves in a channel,
for a class of quasilinear hyperbolic--parabolic systems
including the equations of thermoviscoelasticity.
The main difficulties are to (i) obtain Fr\'echet differentiability
of the time-$T$ solution operator by appropriate hyperbolic--parabolic
energy estimates, and (ii) handle $O(2)$ symmetry in the absence
of either center manifold reduction (due to lack of spectral gap)
or (due to nonstandard quasilinear hyperbolic-parabolic form)
the requisite framework for treatment by spatial dynamics
on the space of time-periodic functions, the two standard treatments
for this problem.
The latter issue is resolved by Lyapunov--Schmidt reduction of the time-$T$
map, yielding a four-dimensional problem with $O(2)$ plus approximate
$S^1$ symmetry, which we treat ``by hand''
using direct Implicit Function Theorem arguments.
The former is treated by balancing information obtained in
Lagrangian coordinates with that from an
augmented system.
Interestingly, this argument does not apply to gas dynamics
or magnetohydrodynamics (MHD), due to the infinite-dimensional family of Lagrangian symmetries
corresponding to invariance under arbitrary volume-preserving diffeomorphisms.
\end{abstract}

\date{\today}
\maketitle


\section{Introduction}\label{s:intro}

In this paper, we treat transverse Hopf bifurcation,
or ``cellular instability,'' of planar viscous shock waves in
an infinite channel with periodic boundary conditions,
for a class of hyperbolic--parabolic systems
including the equations of thermoviscoelasticity.
Transverse Hopf bifurcation has been treated in \cite{TZ2} for
semilinear equations.  The main differences here are
{\it partial parabolicity}/lack of parabolic smoothing
and {\it reflection symmetry} of the physical equations.
The former adds considerable technical difficulty to do with
the basic issue of regularity of the time-$T$ solution map,
as discussed for the 1D case in \cite{TZ3}.
The latter implies that the underlying bifurcation is not of planar
Hopf type, but, rather, a four-dimensional $O(2)$ Hopf bifurcation
as discussed for example in \cite{GSS}: roughly speaking, a ``doubled''
Hopf bifurcation coupled by nonlinear terms.

$O(2)$ Hopf bifurcation is typically treated by center manifold reduction followed
by transformation to a (doubly) angle
invariant normal form, and thereby
to a planar stationary bifurcation with $D_4$ symmetry in the (two) radial
coordinates.
Here, however, the linearized operator about the wave has no
spectral gap, hence standard center manifold theorems do not apply;
indeed, existence of a center manifold is unclear.
Instead, we proceed by the Lyapunov--Schmidt reduction
framework of \cite{TZ2},
applied to the time-$T$ evolution map of the underlying perturbation equations,
resulting in a $4$-dimensional stationary bifurcation problem with
$O(2)$ symmetry plus an additional ``approximate $S^1$ symmetry'' induced by
the underlying rotational linearized flow.
The latter is then analyzed ``by hand'', using direct
rescaling/Implicit Function Theorem arguments.

We note that, though there exist other methods suitable to treat related
problems without spectral gap, notably the spatial dynamics approach used by
Iooss, Sandstede--Scheel, and others- see in particular \cite{GSS,SS})-
the ``reverse temporal dynamics'' approach of \cite{TZ3} is the only one that has so far been successfully
applied (in the 1-D case) to the viscous shock solutions of physical,
partially parabolic systems.
Indeed, the main advantage of this method is
that it typically applies whenever there is an existing
time-evolutionary stability theory for the background equilibrium solution, which
in this case has already been developed.
A disadvantage of the method is, simply, that it is not the standard one, and so, as
turns out to be the case here, one
cannot always appeal to existing theory to treat the resulting reduced system.
Though elementary, the treatment of the nonstandard
finite-dimensional reduced system is thus a significant part of our analysis.

Regarding regularity, a critical aspect, as in the 1D
case \cite{TZ3}, is to work in Lagrangian rather than Eulerian coordinates,
in which hyperbolic transport modes become constant-coefficient linear rather
than quasilinear as in the Eulerian case, and certain key
variational energy estimates do not lose derivatives.
For our argument, we require also other favorable properties
related to stability of constant solutions that are evident in the Eulerian
formulation by existence of a convex entropy, but for multi-D
are in the Lagrangian formulation are less clear.
Fortunately, this issue has been addressed by Dafermos
using the ideas of involution and contingent entropy \cite{Da}
in a way suitable for our needs,
in particular yielding the necessary properties for the equations of
thermoviscoelasticity.

Lest one conclude that Eulerian and Lagrangian formulations
share identical properties, we point out
that for {\it gas dynamics and MHD, this is far from the case.}
Since the stress tensor in these cases
depends on the strain tensor through density alone, that is, only through
the Jacobian of the displacement map, it follows that the Lagrangian equations
are {\it invariant under any volume-preserving diffeomorphism,}
an infinite-dimensional family of symmetries preventing asymptotic
stability of constant solutions.  Meanwhile, the Eulerian equations, possessing
a convex entropy, automatically do have the property of asymptotic stability.
This represents a genuine difference between Eulerian and Lagrangian
formulations for gas dynamics and MHD, and an obstruction to the methods of
this paper.
%
We discuss in Section \ref{s:gas} various ideas how this might be overcome.

\medskip

{\bf Notation}: In what follows  $\bu:\RR\times [-\pi,\pi]\to\R^n$ is a smooth function, periodic on $[-\pi,\pi]$. Unless otherwise indicated, indices $j,k$ are in the
range $\{1, 2\}$, and summations in $j,k$ are
from 1 to 2. We denote
$\partial_j:=\frac{\partial}{\partial x_j}$ and
$\partial_t:=\frac{\partial}{\partial t}$,
$\alpha\in \mathbb{N}^2$,
and $D^{\alpha}:=\partial_x^\alpha$.

\subsection{Equations and assumptions}\label{assumptions}
Consider a one-parameter family of standing viscous planar shock
solutions \be\label{prof} \bu(x,t)=\bar \bu^\eps(x_1), \qquad
\lim_{x_1\to \pm \infty} \bar \bu^\eps(x_1)=\bu_\pm^\eps \quad \hbox{\rm
(constant for fixed $\eps$)}, \ee of a smoothly-varying family of
conservation laws
\begin{equation} \label{sys} \bu_t =\CalF(\eps; \bu):= \sum_{jk}\partial_j(B^{jk}(\eps;
\bu)\partial_k \bu)- \sum_j \partial_j F^j(\eps; \bu), \quad \bu\in \RR^n
\end{equation}\label{eq:2ncs}
on the periodic channel $x_1\in \R$, $x_2\in  [-\pi,\pi]_{per}$
possibly also subject to constraints
\be\label{constraints}
\sum_{j} M_j \partial_j u=0, \qquad M_j=\const \in \R^{r\times n}
\ee
preserved by the flow of \eqref{sys},
with associated linearized operators
\be\label{def-Leps}
L(\varepsilon)\bv:=\sum_{jk}\partial_j\Big(B^{jk}(\eps;\bar
\bu^{\varepsilon})\partial_k \bv-\partial_\bu B^{jk}(\eps;\bar
\bu^{\varepsilon})\bv\partial_k\bar
\bu^{\varepsilon}\Big)-\sum_j\partial_j (\partial_\bu F^j(\eps;\bar
\bu^{\varepsilon})\bv).
\ee
Typically, the bifurcation parameter $\varepsilon$ measures
shock amplitude or other physical parameters.
Here, the linear operator $L(\eps)$ is considered as a closed linear operator on $L^2(\R\times [-\pi,\pi],\C^n)$ with domain $\dom(L(\eps))=H^2(\R\times[-\pi,\pi],\C^n)$, and
the functions $B^{jk}:(-\delta,\delta)\times\R^n\to\R^{n\times n}$ and $F^j:(-\delta,\delta)\times\R^n\to\R^n$ are smooth in $\bu$, see also
Hypothesis (H0) below.
Equations (\ref{sys}) are typically shifts  $B^{jk}(\eps;
\bu)=B^{jk}(\bu)$, $F^1(\eps; \bu):= f^1(\bu)-s(\eps) \bu$ of a single
equation
$$ \bu_t = \sum_{jk}\partial_j(B^{jk}(\bu)\partial_k \bu) - \sum_{j}\partial_j F^j(\bu)
$$
written in coordinates $\tilde x =( x_1-s(\eps)t, x_2)$
moving with traveling-wave solutions $\bu(x,t)=\bar
\bu^\eps(x_1-s(\eps)t)$ of varying
speeds $s(\eps)$.
Profiles $\bar \bu^\eps$ satisfy the standing-wave ODE \be\label{ode-u-eps}
B^{11}(\eps;\bu)\bu'=F^1(\eps;\bu)- F^1(\eps; \bu^\eps_-). \ee

We assume, further, that there are {\it augmented variables}
$z:=(\bu, z(\bu))\in \RR^N$, $N\geq n$, satisfying an enlarged system of
conservation laws whenever $\bu$ satisfies \eqref{sys}--\eqref{constraints},
and a further invertible coordinate
change $z\to w$ yielding the ``partially symmetric
hyperbolic--parabolic form'' \cite{Z2}
\be\label{aug}
\BbbA^0(\eps;w) w_t =
\sum_{jk}\partial_j(\BB^{jk}(\eps;
w)\partial_k w)- \sum_j
\BbbA^j(\eps; w) \partial_jw
+ \begin{pmatrix} 0\\ \tilde g \end{pmatrix},
\;
\hbox{\rm with $\tilde g={\mathcal{O}}(|w_x|^2)$.}
\ee
%
We shall use the notation
\begin{equation}\label{Aj-eps}
A^j:(-\delta,\delta)\times\R^n\to\R^{n\times n},\quad A^j(\eps;\bu)=F_\bu^j(\eps;\bu);
\end{equation}
\[
A_\pm^j,B_\pm^{jk}:(-\delta,\delta)\to\R^{n\times n}
\]
\begin{equation}\label{ABj-eps-pm}
A_\pm^j(\eps)=F_\bu^j(\eps;\bu_\pm^\eps)=A^j(\eps;\bu_\pm^\eps),\quad B_\pm^{jk}(\eps)=B^{jk}(\eps;\bu_\pm^\eps).
\end{equation}
\[
\BbbA_\pm^j,\BB_\pm^{jk}:(-\delta,\delta)\to\R^{n\times n}
\]
\begin{equation}\label{bABj-eps-pm}
\BbbA_\pm^j(\eps)= \BbbA^j(\eps;z_\pm^\eps),\quad \BB_\pm^{jk}(\eps)=\BB^{jk}(\eps;z_\pm^\eps).
\end{equation}
In what follows, if $A$ is an $n\times n$ matrix we will use lower subscripts for the block decomposition
$A=\left(\begin{array}{cc} A_{11} & A_{12} \\ A_{21} & A_{22}
\end{array}\right)$, where $A_{11}$ is an $(n-r)\times(n-r)$ matrix and $A_{22}$ is an $r\times r$ matrix.
If $\BbbA$ is $N\times N$, we will use the same notation for
block decompositions in $N-r$ and $r$ dimensional blocks.

\subsubsection{Structural conditions}
We make the following structural assumptions:
\begin{enumerate}

\item[(A1)] For every $j,k\in\{1,2\}$ there exists functions $b^{jk}:(-\delta,\delta)\times \R^n\to\R^{r\times r}$ and function $\tilde{b}^{jk}:(-\delta,\delta)\times \R^N\to\R^{r\times r}$  such that $B^{jk}$ and
$\BB^{jk}$  have the representations
\be\label{rep-Bjk}
B^{jk}(\eps;\bu)=\left(\begin{array}{cc} 0 & 0 \\ 0 & b^{jk}(\eps;\bu),
\end{array}\right),
\qquad
\BB^{jk}(\eps;w)=\left(\begin{array}{cc} 0 & 0 \\ 0 & \tilde{b}^{jk}(\eps;w),
\end{array}\right).
\ee
Moreover, the first $(n-r)$ components of $F^j(\eps, \bu)$, $j\in\{1,2\}$ are
{\it linear in} $\bu$.


\item[(A2)] There exists a matrix-valued function $A^0:(-\delta,\delta)\times\R^n\to\R^{n\times n}$
smooth in $\bu\in\R^n$,  positive definite and having a block-diagonal structure such that
\begin{enumerate}
\item[(i)] $A^0_{11}(\eps;\bu) A^j_{11}(\eps;\bu)$ is symmetric for any $\eps\in(-\delta,\delta)$, $\bu\in\R^n$;
\item[(ii)] $A^0_{11}(\eps;\bu)A^1_{11}(\eps;\bu)$ is either positive definite or negative definite for any $\eps\in(-\delta,\delta)$, $\bu\in\R^n$;
\item[(iii)] there exists a constant $\theta>0$ such that
\[
\sum_{j,k}\bv_j\cdot
\Big(A^0_{22}(\eps;\bu)b^{jk}(\eps;\bu) \bv_k\Big)\geq \theta \sum_j |\bv_j|^2\quad\mbox{for all}\quad \bv_1, \bv_2\in\mathbb{R}^r.
\]
\end{enumerate}

\item[(A3)]
$\BbbA^0$ is block-diagonal, symmetric positive definite and:
\begin{enumerate}
\item[(i)] $\BbbA^j_{11}(\eps;w)$ is symmetric for any $\eps\in(-\delta,\delta)$, $w\in\R^N$;
\item[(ii)] $\BbbA^1_{11}(\eps;w)$ is either positive definite or negative definite for any $\eps\in(-\delta,\delta)$, $w\in\R^N$;
\item[(iii)] $\BbbA_\pm^j(\eps)$ is symmetric for any $\eps\in(-\delta,\delta)$;
\item[(iv)] there exists a constant $\theta>0$ such that
\[
\sum_{j,k}\bv_j\cdot
\Big(\BbbA^0_{22}(\eps;w)\tilde b^{jk}(\eps;w) \bv_k\Big)\geq \theta \sum_j |\bv_j|^2\quad\mbox{for all}\quad \bv_1, \bv_2\in\mathbb{R}^r.
\]
\end{enumerate}
\end{enumerate}

\noindent
To (A1)--(A3), we add the following more detailed hypotheses. Here
and elsewhere, $\sigma(M)$ denotes the spectrum of a matrix or
linear operator $M$.
\begin{enumerate}
\item[(H0)] For any $j,k\in\{1,2\}$ the functions  $F^j$,$B^{jk}$, $\BbbA^j$,
$\BB^{j,k}$ are of class $\mathcal{C}^{\nu}$, for some $\nu\ge 5$.

\item[(H1)]  For any $x_1\in\RR$, $0$ is not an eigenvalue of $\BbbA_{11}^1(\eps;\bar \bu^\eps(x_1))]_{11}$. Moreover, each eigenvalue of $\BbbA_{11}^1(\eps;\bar \bu^\eps(x_1))]_{11}$   has a multiplicity independent of $x_1\in\RR$.

\item[(H2)] $\sigma(A_\pm^1(\eps))$ is real, semisimple, and nonzero,

\item[(H3)]
There is no eigenvector of $\sum_j \BbbA^j_\pm(\eps) \xi_j$ lying in
$\ker \sum_{j,k}\BB^{jk}_\pm(\eps) \xi_j \xi_k$, for any $\xi\in \RR^2$
and all $\e\in (-\delta,\delta)$
(Kawashima's {\it genuine coupling condition} \cite{Kaw}).

\item[(H4)]  Considered as connecting orbits of \eqref{ode-u-eps}, $\bar \bu^\eps$
lie in an $\ell$-dimensional manifold, $\ell\ge 1$, of solutions
(\ref{prof}), obtained as a transversal intersection of the unstable
manifold at $\bu^\eps_-$ and the stable manifold at $\bu_+^\eps$.
(In the most typical case of a {\it Lax-type} shock \cite{L},
$\ell=1$ and the manifold of solutions
consists simply of the set of $x_1$-translates of a single wave.)
\end{enumerate}

\noindent
Finally, we make the key assumption of {\it reflection symmetry}:
\begin{enumerate}
\item[(B1)] Equations \eqref{sys} are invariant under
$S:x_2\to -x_2$, $\bu\to M\bu$, for $M\in \R^{n\times n}$ constant.
\end{enumerate}

\begin{remark}\label{entrmk}
Conditions (A1)--(A2) are analogous to the 1D conditions of \cite{TZ3},
used to obtain Fr\'echet differentiability of the nonlinear source term
in the time-$T$ solution map for the perturbation equations of \eqref{sys}
about a standing shock.
Condition (A3) is analogous to the multi-D conditions of \cite{Z1,Z2},
used to obtain damping-type energy estimates and high-frequency resolvent bounds.
To obtain a system of form \eqref{aug} satisfying (A3), it is
sufficient that (a) {\it the system of conservation laws
in the augmented variable $z$ inherit the structural ``well-posedness''
conditions
(A3)(i),(ii),(iv)} (in practice no issue) and (b) the system of conservation laws in $z$ {\it possess
a convex entropy $\eta(z)$ in a neighborhood of endstates $z_\pm^\eps$};
see the discussion surrounding Eg. 2.15 and Section 2.1 of \cite{Z2}.
\er

\br
With slight further effort,
we may replace as in \cite{Z1,Z2} the uniform ellipticity conditions
(A2)(iii) and (A3)(iv) with the spectral conditions
$
\sigma \Big(
\sum_{j,k} A^0_{22}(\eps;\bu)b^{jk}(\eps;\bu) \xi_j
\xi_k\Big)\geq \theta |\xi|^2$,
for all $\xi\in \R^2$,
using G\"arding's inequality instead of direct integration by parts
in the energy estimates of Section \ref{s:energy}.
See the proof of \cite[Proposition 5.9]{Z1} or the proof of \cite[Proposition 1.16]{Z2}.
However, this is not needed for the physical applications from continuum
mechanics that we have in mind.
\er

\br\label{O2rmk}
Condition (B1) together with translation-invariance in $x_2$, implies
O(2) symmetry in the perturbation equations around the (symmetric, since
constant in $x_2$-direction) background shock solutions, with
$R(\theta): \bu(x_1, \cdot, t)\to \bu(x_1, (\cdot +\theta)\mathrm{mod}\;2\pi, t)$
corresponding to rotation and $S$ reflection, $R(\theta)S=SR(-\theta)$.
That is, ``rotation'' in this context should be thought of as $x_2$-translation.
\er

\begin{example}\label{viscoeg}
The equations of isothermal viscoelasticity in Lagrangian
coordinates are
\be\label{thermovisc}
 \xi_{tt} - \nabla_X\cdot \big(DW(\nabla\xi) + \mathcal{Z}(\nabla\xi,
\nabla\xi_t)\big)=0,
\ee
where $\xi$ denotes deformation of an initial reference configuration
of constant temperature and density, and $\nabla_X\cdot$
denotes divergence, taken row-wise across a matrix field.
The elastic potential $W$ is a function of the deformation
gradient $F:=\nabla \xi$ and the viscous stress tensor $\mathcal{Z}$
a function of $F$ and $F_t$ obeying the Claussius-Duhem inequality
$\mathcal{Z}(F,Q):Q\geq 0$, where ``$:$''
denotes Frobenius matrix inner product;
see \cite{A,Da,BLeZ}.
Expressing these equations as a second-order system in $F$ and $u:=\xi_t$,
we obtain a system of first-order linear hyperbolic equations
$F_t-\nabla u=0$ in $F$ coupled with second-order parabolic equations
$u_t -  \nabla_X\cdot \big(DW(F) + \mathcal{Z}(F,\nabla_X u)\big)=0$
in $u$.
In the strict case $Q:\mathcal{Z}(F,Q)\gtrsim |Q|^2$,
the latter satisfies the uniform ellipticity
condition (A2)(iii), and we obtain (A1)--(A2), with constraints
$\nabla_X \times  F=0$, and $A^0=\Id$.

The delicate point is to verify (A3), and in particular (A3)(iii)
for an appropriate augmented system.
But, this follows whenever $W$ is polyconvex in a neighborhood of
$z_\pm^\eps$, i.e., a
strictly convex function of $F$, its determinant $\det F$, and its adjugate,
or transposed matrix of minors $F^\sharp$, as shown by Dafermos \cite{Da} using
the method of {\it contingent entropies} and considering the system
in terms of the extended variable
$z:=(F,u,F^\sharp, \det F)$, by the fact that $W(z)$ is then automatically
a (``viscosity-compatible'') strictly convex
entropy for the parabolic flow, from which the result follows as
described in Remark~\ref{entrmk}, by a construction similar to that of \cite[Eg. 1.25]{Z2}, .
\end{example}

\begin{example}\label{gaseg}
Similar considerations as in Example \ref{viscoeg}
 yield that the equations of thermoviscoelasticity satisfy our assumptions,
so long as the thermoelastic potential $e=e(F,S)$
is a strictly convex function of $(F,F^\sharp, \det F)$ and entropy $S$.
However, for ordinary gas dynamics, $e$
is a convex function of $\tau:=\det F$ and $S$ alone, where $\tau$ denotes
specific volume.  Thus, considered as a function of $(F,F^\sharp,\det F,S)$,
it is nonstrictly convex, and indeed it is readily verified that (H4) fails
(see \S \ref{s:gas}).
\end{example}

\br\label{tedrmk}
Though the construction of Example \ref{viscoeg} and the
conditions (A1)--(A3) in terms of two different coordinatizations
may seem overly complicated, we do not see a way to shorten this
description.  In particular, note that for the equations of isentropic
viscoelasticity, condition (A3)(iii) typically fails
for coordinates $\bu$, in which \eqref{sys} does not possess a convex entropy,
hence (by results of \cite{KaS})
the linearized equations are not symmetrizable near
endstates $\bu_\pm^\eps$.
On the other hand, condition (A1) fails in the coordinates $z$,
since the first $N-r$ coordinates, now including nonlinear functions of
the first $n-r$ coordinates of $\bu$, are no longer linear.
As our arguments require both of these properties, we therefore seem to require
both coordinatizations as well.
\er

\begin{example}
Eulerian gas dynamics or MHD with artificial viscosity, i.e., strictly parabolic
second-order terms, satisfy conditions (A1)--(A3), (H0)--(H4) automatically
with the single coordinate $z(\bu)=\bu$.
This admits a much simpler treatment of regularity, as, e.g., in \cite[\S 6]{TZ2}.
\end{example}

\subsection{O(2) Hopf bifurcation}\label{mot}
Before stating our results, we recall the standard $O(2)$
Hopf bifurcation scenario in finite dimensions, following
Crawford and Knobloch \cite{CK}.
After Center manifold/Normal form reduction, this takes the form, to cubic order, of
\ba\label{normz}
\dot z_1&= (\varepsilon \hat\varkappa(\varepsilon) +\hat\chi(\varepsilon) \rmi) z_1 + (\hat \Lambda |z_1|^2 + \hat \Gamma |z_2|^2)z_1,\\
\dot z_2&= (\varepsilon \hat\varkappa(\varepsilon)+\hat\chi(\varepsilon) \rmi) z_2 + (\hat \Lambda |z_2|^2 + \hat \Gamma |z_1|^2)z_2,\\
\ea
where $\varepsilon\in \RR$ is a bifurcation parameter, $\varkappa,\chi\in \RR$
are nonzero bifurcation coefficients,
and $z_j, \hat \Lambda, \hat \Gamma \in \C$.
Model \eqref{normz} has $O(2)$ symmetry group consisting of rotation
$R(\theta): (z_1,z_2)\to (z_1e^{\rmi\theta},z_2e^{-\rmi\theta})$
and reflection $S: (z_1,z_2)\to (z_2,z_1)$, with $SR(\theta)=R(-\theta)S$,
and also an additional $S^1$ symmetry $T(\beta): (z_1,z_2)\to (z_1e^{\rmi\beta},z_2e^{\rmi\beta})$ associated with normal form.
The linearization about the trivial equilibrium solution $(z_1,z_2)=(0,0)$
features a pair of double eigenvalues
$$
\lambda_\pm(\varepsilon)= \hat\varkappa(\varepsilon) \eps \pm \rmi\hat\chi(\varepsilon)
$$
crossing the imaginary axis at $\varepsilon=0$: an {\it equivariant Hopf bifurcation} with double multiplicity forced by reflection symmetry.
Noting that radial equations decouple from angular equations as
\ba\label{normr}
\dot r_1&= \varepsilon \hat\varkappa(\varepsilon) r_1 + (
\Re \hat \Lambda |r_1|^2 + \Re \hat \Gamma |r_2|^2)r_1,\\
\dot r_2&= \varepsilon \hat\varkappa(\varepsilon) r_2 + (\Re \hat \Lambda |r_2|^2 + \Re \hat \Gamma |r_1|^2)r_2,\\
\ea
$r_j:=|z_j|$, we find that periodic solutions are exactly equilibria for the
planar radial system \eqref{normr}.
Under the nondegeneracy conditions
\be\label{nondeg}
\Re \hat \Lambda|_{\varepsilon=0}\neq 0,
\quad
\Re(\hat \Lambda +\hat \Gamma)|_{\varepsilon=0}\neq 0,
\quad
\Re (\hat \Lambda-\hat \Gamma)|_{\varepsilon=0}\neq 0,
\ee
it is readily seen that the periodic solutions consist, besides
the trivial solution $(r_1,r_2)=(0,0)$ exactly of
``traveling'' (or ``rotating'') wave solutions $(r_1,r_2)\equiv (r_*,0)$ or $(0,r_*)$ and ``standing'' (or ``symmetric'') wave solutions
$(r_1,r_2)\equiv (r_*,r_*)$
consisting of a nonlinear superposition of counter-rotating traveling waves,
$r_*\neq 0$, with associated radial bifurcations of pitchfork type
$|r|\sim \sqrt{\varepsilon}$.

Restricting attention to periodic solutions with period $T(\varepsilon)$
near the linear period $T_*(\varepsilon):= 2\pi/\hat \chi(\varepsilon)$,
and noting that spurious radial equilibria introduced by $\Re(\hat\Lambda-\hat\Gamma)=0$
will have different periods in $z_1$, $z_2$ unless $\Im(\hat\Lambda-\hat\Gamma)=0$
as well, we find that \eqref{nondeg} may be weakened to
\be\label{wnondeg}
\Re \hat \Lambda|_{\varepsilon=0}\neq 0,
\quad
\Re(\hat \Lambda +\hat \Gamma)|_{\varepsilon=0}\neq 0,
\quad
(\hat \Lambda-\hat \Gamma)|_{\varepsilon=0}\neq 0.
\ee

\subsubsection{Alternative treatment via the displacement map}\label{dtreat}
Alternatively, we show in \S \ref{Proof-MT} that $O(2)$ Hopf bifurcation
after Lyapunov--Schmidt reduction of the time-$T$ displacement map $F_j:=
z_j(T)-z_j(0)$ for $z_j(0):=a_j$ takes the form, to cubic order, of
a {\it two-parameter stationary bifurcation}:
\ba\label{dispa}
0=F_1(a_1,a_2,\varepsilon ,\mu)&= (\varepsilon \varkappa(\varepsilon,\mu)
 +\chi(\varepsilon) \mu \rmi) a_1 + (\Lambda |a_1|^2 + \Gamma |a_2|^2)a_1,\\
0=F_2(a_1,a_2,\varepsilon ,\mu)&= (\varepsilon \varkappa(\varepsilon,\mu)+\chi(\varepsilon) \mu \rmi) a_2 + (\Lambda |a_2|^2 + \Gamma |a_1|^2)a_2\\
\ea
in four dimensions,
where $a_j,\Lambda, \Gamma\in \CC$, $\varkappa, \chi\in \R$ are
nonzero bifurcation coefficients, and $ \varepsilon, \mu \in \R$
are bifurcation parameters, with $\mu $ measuring the difference between
$T$ and the linear period $T_*(\varepsilon)$.
It is readily checked that, again, zeros of \eqref{dispa}, corresponding
to periodic solutions for the original problem, consist, besides the
trivial solution $(a_1,a_2)=(0,0)$, exactly of
traveling waves $(a_1,a_2)=(a_*,0)$ or $(0,a_*)$ and standing waves
$(a_1,a_2)=(a_*,e^{\rmi\theta} a_*)$, $a_*\neq 0$, $\theta\in \R$,
each of pitchfork type $|a|\sim \sqrt{\varepsilon}$,
under the nondegeneracy conditions
\be\label{nohatwnondeg}
\Re \Lambda|_{\varepsilon=0}\neq 0,
\quad
\Re(\Lambda +\Gamma)|_{\varepsilon=0}\neq 0,
\quad
(\Lambda-\Gamma)|_{\varepsilon=0}\neq 0.
\ee
This gives a different, more direct (though higher-dimensional),
route to $O(2)$ Hopf bifurcation avoiding Center Manifold or Normal form reductions, the simple form of the truncated system \eqref{dispa}
being forced rather by symmetry/time-averaging.
The extension to the full
system then proceeds by rescaling/Implicit Function Theorem arguments,
as described in \S \ref{Proof-MT} and Appendix \ref{sec:jacobian}.

\subsection{Statement of the main result}\label{mainresult}
We are now ready to describe our main results.
Note first that, by independence of coefficients of $L(\eps)$ on
the $x_2$-coordinate, the spectra of $L(\eps)$ may be decomposed
into spectra associated with invariant subspaces of functions $e^{\rmi kx_2}f(x_1)$
given by Fourier decomposition, on which $L(\eps)$ acts as
an ordinary differential operator $L_k(\eps)$ in $x_1$.
The operator $L_0(\eps)$ is exactly the linearized operator for the associated
one-dimensional problem, while the operators $L_k(\eps)$, $k\in\ZZ\setminus\{0\}$, govern the evolution
of ``transverse modes'' with Fourier wave number $k$.

We shall assume {\it one-dimensional spectral stability},
or stability of $L_0(\eps)$, in the sense of \cite{Z1,Z2}, which
is typically expressed
in terms of an {\it Evans function} associated with the wave.
This expresses in a generalized sense that the multiplicity-$\ell$ zero-eigenvalues guaranteed by (H4), which are also embedded in the essential spectrum
of $L_0(\eps)$, are the only spectra of $L_0(\eps)$ contained in the nonstable complex
half-plane $\CC_+=\{\lambda\in\CC:\Re \lambda \geq 0\}$.

At the same time, we will assume that there exists a conjugate pair of eigenvalues
\be\label{scene}
\lambda_\pm(\eps)= \gamma(\eps)\pm \rmi\omega(\eps),
\quad \gamma'(0), \omega(0)\neq 0
\ee
crossing the imaginary axis as $\eps$ crosses the bifurcation value $\eps=0$,
associated with transverse Fourier modes $k=\pm k_*\neq 0$,
and that these are each of the minimal (in the presence of reflective
symmetry) multiplicity two.
By the assumed reflective symmetry in $x_2$,
$\lambda_+(\eps)$ and $\lambda_-(\eps)$ are thus associated to subspaces
with eigenbases
\be\label{bespace}
\hbox{\rm $e^{\pm \rmi k_* x_2}w^\eps(x_1)$  and
$e^{\mp \rmi k_* x_2}\overline{w^\eps}(x_1)$,}
\ee
respectively, where $\bar{ }$ denotes complex conjugate.
These hypotheses are gathered in \S \ref{Proof-MT} as condition $(D_\eps)$;
see \S \ref{Proof-MT} for further details.
Together, they comprise a {\it spectral transverse $O(2)$ Hopf bifurcation;}

\medskip

Define now the exponentially-weighted function space
\be\label{def-X1'}
X'_1=H_\eta^4(\R,\C^n):=H^4(\R,\C^n;e^{\eta(1+|x_1|^2)^{1/2}}dx_1),
\quad \eta <<1,
\ee
with its natural Hilbert space norm and scalar product.
Then, the main result of this paper is:
\begin{theorem}\label{main-result}
Let $\bar \bu^\eps$, $\eps\in(-\delta,\delta)$, be a one-parameter family of standing viscous planar shock solutions of \eqref{sys} satisfying
Hypotheses (A1)--(A3), (B1), (H0)--(H4) and $(D_\eps)$ (given in \S\ref{O2-bif}). Then:

(i) Existence of time-periodic solutions $\hat u^\eps$
close in $X_1'$ norm to $\bu^\eps$ with period $T$
close to the linearized
value $T_*(\eps)=2\pi/\omega(\eps)$
is equivalent to satisfaction of one of an $\ell$-parameter family
\eqref{new-system-bif} of equations of
form \eqref{dispa} plus higher-order perturbations,
indexed by $\bb\in \R^\ell$ relating $\eps$,
a parameter $\mu$ measuring the difference between
$T$ and $T_*$, and the
projection $\Pi^\eps (\hat u^\eps -\bu^\eps)|_{t=0}$,
appropriately coordinatized
as $(a_1,a_2)\in \CC^2$, where $\Pi^\eps $ is the total eigenprojection of
$L(\eps)$ onto the eigenspace \eqref{bespace} associated with
eigenvalues $\lambda_\pm (\eps)$.

(ii) Under the genericity
assumption \eqref{geneq}
(Hypothesis \ref{genericity}, \S \ref{Proof-MT})
analogous to \eqref{nohatwnondeg},
equation \eqref{sys} exhibits an $\ell$-fold
$O(2)$-Hopf bifurcation from $\bar\bu^\eps$, namely, the family of
$X_1'$-close (nontrivial) periodic solutions with nearby periods consists
precisely of $4$ smooth
families indexed by $\bb$ sufficiently small, of bifurcating solutions
$$
\sqrt{\eps}/C\leq \|\hat u^\eps-\bu^\eps\|_{X_1'}\leq C\sqrt{\eps}
$$
with $(a_1,a_2)$ $\eps$-close to each of the ``traveling-'' and
``standing-wave'' solutions
$(a_*(\bb),0),(0,a_*(\bb))$ and $(a_{\natural}(\bb),a_{\natural}(\bb))$
of the associated cubic truncated system, as described in \S \ref{mot}.
These are, variously,
of supercritical ($\eps>0$) or subcritical ($\eps<0$) type, depending on
model parameters $\Lambda(\bb)$, $\Gamma(\bb)$ at $\bb=0$.
In the Lax case $\ell=1$, the ``traveling-wave'' type solutions are
actual traveling waves $\overline{\bu}^\eps(x,t)= h^\eps(x_1,x_2-dt)$ with respect
to the transverse $x_2$ direction.
\end{theorem}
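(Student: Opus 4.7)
My plan is to follow the displacement-map Lyapunov--Schmidt framework of \cite{TZ2}, rather than attempting a center manifold reduction (unavailable because $L(\eps)$ has no spectral gap), and to analyze the resulting finite-dimensional bifurcation equation \emph{by hand} using the $O(2)$-equivariance inherited from (B1) together with the approximate $S^1$ symmetry generated by the linearized rotational flow on the center eigenspace. The argument breaks into three stages: (a) establish that the time-$T$ solution operator $\Phi_T^\eps$ is sufficiently Fr\'echet differentiable on the weighted space $X_1'$; (b) perform a Lyapunov--Schmidt reduction of the displacement $\Phi_T^\eps - \mathrm{Id}$ onto the four-dimensional center eigenspace spanned by \eqref{bespace}; and (c) solve the reduced equation by a rescaling plus Implicit Function Theorem argument.

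For stage (a), the principal difficulty is the partial parabolicity of \eqref{sys}: because the first $n-r$ equations are hyperbolic, $\Phi_T^\eps$ does not smooth those components and direct energy estimates lose derivatives. I would resolve this as in \cite{TZ3} by combining two ingredients: (i) constant-coefficient estimates for the hyperbolic block in Lagrangian coordinates $\bu$, exploiting (A1)--(A2) so that the transport structure is frozen and no derivative is lost, with (ii) Kawashima-type symmetric hyperbolic--parabolic damping estimates for the augmented form \eqref{aug}, which by (A3) and the genuine coupling hypothesis (H3) close the high-frequency bounds. The balancing of the two coordinatizations --- unavoidable per Remark \ref{tedrmk} --- is precisely what forces the dual (A1)+(A2) vs.\ (A3) structure.

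For stage (b), I would decompose $X_1' = E^c(\eps) \oplus E^s(\eps)$, where $E^c(\eps) = \Pi^\eps X_1'$ is the four-dimensional total eigenspace associated with $\lambda_\pm(\eps)$ and $E^s(\eps)$ is its spectral complement, on which $\Phi_T^\eps - \mathrm{Id}$ is uniformly invertible for $T$ near $T_*(\eps)$; the $\ell$-dimensional embedded zero of $L_0(\eps)$ from (H4) is absorbed into the auxiliary parameter $\bb \in \R^\ell$ indexing the profile manifold. Writing a candidate initial datum as $\bar\bu^\eps + a_1 w^\eps + a_2 \overline{w^\eps} + v$ with $v \in E^s(\eps)$, the Implicit Function Theorem applied to the $E^s$-projected displacement equation yields $v = v(a_1,a_2,\eps,\mu,\bb)$, reducing the problem to the four real equations \eqref{new-system-bif} in $(a_1,a_2) \in \CC^2$, with $\mu = T - T_*(\eps)$. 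This establishes part (i).

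For stage (c), the reduced system inherits exact $O(2)$-equivariance from Remark \ref{O2rmk} --- rotation $R(\theta)$ acts on the center eigenspace as $(a_1,a_2) \mapsto (e^{\rmi k_* \theta}a_1, e^{-\rmi k_* \theta}a_2)$ and reflection $S$ as $(a_1,a_2) \mapsto (a_2,a_1)$ --- plus an \emph{approximate} $S^1$-equivariance from the near-rotational linearized flow $(a_1,a_2)\mapsto (e^{\rmi\omega(\eps)T}a_1,e^{-\rmi\omega(\eps)T}a_2)$. Equivariant Taylor expansion forces the cubic truncation to have the normal form \eqref{dispa}. Under the genericity assumption \eqref{geneq}, I would verify that the truncated system has precisely the four non-trivial branches $(a_*(\bb),0)$, $(0,a_*(\bb))$, and the standing-wave orbits $(a_\natural(\bb),e^{\rmi\theta}a_\natural(\bb))$ at scale $|a|\sim\sqrt{\eps}$, and that after the rescaling $a_j=\sqrt{\eps}\,\tilde a_j$ each branch has a non-singular Jacobian in $(\tilde a_1,\tilde a_2,\mu)$, whose computation I would defer to Appendix \ref{sec:jacobian}. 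A final Implicit Function Theorem step then upgrades each truncated branch to a smooth branch of the full reduced equation, proving part (ii); the Lax-case statement ($\ell=1$) follows because the $O(2)$-rotation absorbs the phase of $a_1$ (resp.\ $a_2$) into a genuine $x_2$-translation moving at constant speed $d$. The main conceptual obstacle is the handling of the approximate $S^1$ symmetry \emph{without} the normal-form theory usually grounded in center manifold reduction: \eqref{dispa} is forced by symmetry only modulo $S^1$-breaking higher-order corrections, so the final rescaling/IFT argument must be organized so that these corrections enter at sufficiently high order in $\sqrt{\eps}$ not to destroy the non-singular structure of the truncated branches.
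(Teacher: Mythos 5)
Your plan follows essentially the same route as the paper: Lagrangian-coordinate energy estimates (A1)--(A2) for differentiability of the time-$T$ map, with (A3)/(H3) supplying the Kawashima-type bounds, Lyapunov--Schmidt reduction of the displacement map with the $\ell$-fold translational kernel entering as the parameter $\bb$, and a rescaling/Implicit Function Theorem analysis of the reduced four-dimensional system whose cubic truncation is pinned down by $O(2)$ plus approximate $S^1$ symmetry. Two points, however, need repair. First, on the spectral complement the map $\Phi^\eps_T-\mathrm{Id}$ is \emph{not} uniformly invertible: $\lambda=0$ is embedded in the essential spectrum of $L_0(\eps)$, which touches the imaginary axis, so what one actually has (following \cite{TZ3}) is only a bounded \emph{right} inverse of $e^{T\tL(\eps)}-I$ from the derivative/weighted spaces $\CalX_2,\CalB_2$ to $\CalX_1,\CalB_1$ (Lemma~\ref{inv-lem}), the infinite-dimensional equation is solved as an $\ell$-parameter family $\tbv_0=\bz(a_1,a_2,\eps,T,\bb)$, and uniqueness of the resulting $\ell$-parameter families of periodic solutions requires the additional cone/shift argument of \cite[Prop.~2.20, Cor.~2.21]{TZ2}, not merely ``absorbing'' the kernel into $\bb$. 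Also note that (A3)/(H3) are used for the high-frequency resolvent and semigroup decay bounds underlying this right-invertibility (Lemma~\ref{Lk-est}), rather than for the Fr\'echet-differentiability step, which rests on (A1)--(A2) alone.

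Second, and more seriously, your claim that ``each branch has a non-singular Jacobian in $(\tilde a_1,\tilde a_2,\mu)$'' fails for the standing-wave branch: the cubic truncation carries the exact extra $S^1$ symmetry $(a_1,a_2)\mapsto(e^{\rmi\beta}a_1,e^{\rmi\beta}a_2)$, so its standing-wave solutions form the circle $(a_\natural,e^{\rmi\theta}a_\natural)$ even after the $O(2)$ rotation is fixed, and the full Jacobian is necessarily singular along it; a direct IFT continuation of that branch as proposed would break down. The paper's Appendix~\ref{sec:jacobian} circumvents this by using rotational symmetry together with invariance under the approximate common rotation to take \emph{both} components real, keeping only three of the four real equations, so that the relevant $3\times3$ determinants $J_2$, $J_3$ cannot both vanish under \eqref{geneq}; the IFT then yields only \emph{uniqueness}, while \emph{existence} of the standing branch comes separately from the reflection symmetry, which guarantees solutions with $a_1\equiv a_2$ a priori. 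Your closing remark about organizing the $S^1$-breaking corrections points in the right direction, but without this split (existence from symmetry, uniqueness from the reduced-variable IFT) the standing-wave case is not closed.
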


\begin{remark}\label{(D-epsilon-rem)}
It is interesting to note that the case $\omega(0)=0$ of a transverse
stationary bifurcation can be converted to the case $\omega(0)\neq 0$ of a
transverse Hopf bifurcation, and vice versa, by the introduction of a moving coordinate frame $x_2 \to x_2-dt$, inducing a shift $\lambda \to \lambda- \rmi kd$
in eigenvalues associated with Fourier number $k$.
Setting $d=\omega(0)/k_*$, this converts the scenario \eqref{scene} to that of an ordinary
(non-reflective symmetric) bifurcation involving a pair of roots crossing
at $\gamma(0)\pm 2\rmi\omega(0)$ plus a pair of roots crossing at $\lambda=0$,
i.e., a translationally-invariant stationary bifurcation in the moving coordinate frame.
This type of bifurcation has been treated recently in \cite{M} for the strictly parabolic semilinear case.
Likewise, a stationary $O(2)$ bifurcation involving a double eigenvalue
$\lambda(\eps)=\gamma(\eps)$ with wave numbers $k=\pm k_*$ can be converted
by the change of coordinates $x_2\to x_2-dt$ to an ordinary (non-$O(2)$-symmetric)
Hopf bifurcation $\lambda_\pm(\eps)=\gamma(\eps)\pm \rmi dk_*$, and treated as
in \cite{TZ2} to yield a time-periodic solution in the moving coordinate frame.
In the Lax case $\ell=1$, the uniqueness/shift invariance argument of
\S \ref{Proof-MT} yields the further information
that this is a traveling wave in $x_2$, as shown by direct (stationary)
argument in \cite{M}.
Thus, there is some overlap in the results obtainable by the methods here and those of \cite{M}; the difference in the $O(2)$ Hopf case is that we obtain full information on {\it all} time-periodic solutions and not only traveling waves.
As noted in \cite{M}, an example of the latter case
arises in MHD, as follows by observations of \cite{FT}.

A third, more degenerate,
possibility not yet treated is the case of a stationary $O(2)$ bifurcation for which the associated eigenfunctions $e^{\pm \rmi k_*x_2}
w(x_1)$ have a genuinely complex profile $w$ for which
$e^{\pm \rmi k_*x_2} \overline{w}(x_1)$ are linearly independent; in this case the multiplicity at $\lambda=0$ is four, and Lyapunov-Schmidt reduction of either the stationary problem or the time-$T$ evolution map for the associated Hopf bifurcation is a more general, $4$-dimensional bifurcation (mapping) problem in
$(z_1,z_2)$ with rotational but not reflectional symmetry.
This would be quite interesting to understand.
\end{remark}
\subsection{Discussion and open problems}
The Lyapunov-reduction/time-$T$ displacement map argument used here
serves as a substitute for the Center Manifold/Normal form reduction or
Lyapunov-reduction/spatial dynamics methods that have been used in other
contexts.
It is interesting to contrast the important use of additional $S^1$ symmetry in
these arguments, corresponding roughly to invariance with respect to
time-evolution.
This is imposed by force in normal form reduction, and appears naturally in
the spatial dynamics approach framed in the space of time-periodic solutions.
In our argument, we use the fact that time-evolution is an approximate
$S^1$ symmetry in a similar way, to restrict the possible forms arising at the
level of cubic approximation; see Remark~\ref{r4.6}.

An interesting open problem would be to carry out a similar analysis
for physical, partially parabolic, systems either in one- or multi-dimensions
using spatial dynamics techniques as done in \cite{SS} in the one-dimensional semilinear strictly parabolic case.
This appears to require both additional care in the choice of
spaces/analytical framework, and
additional theory to cope with absence of parabolic smoothing/compactness.
However, a possible advantage might be to remove the dependence on Lagrangian coordinates
that prevents for the moment the treatment of gas dynamics and MHD.
Some other ideas using the present framework are mentioned in Section \ref{s:gas}.

We note that the same issues obstructing multi-dimensional bifurcation
analysis obstruct also the proof of a multi-dimensional conditional
stability result similar to that obtained in \cite{Z5}
in the one-dimensional case- specifically, incompatibility
between Lagrangian form needed to obtain regularity needed for the
center-stable manifold reduction step and high-frequency resolvent estimates
needed for time-asymptotic decay estimates- making this issue one
of independent interest.

Though we do not carry it out here, spectral stability information on bifurcating solutions should be in principle available via the same reduced
displacement map; indeed, it should be ``reverse-engineerable''
from the standard normal-form analysis
via the relation (through time-integration) of
$\tilde \Lambda$, $\tilde \Gamma$ to $\Lambda$, $\Gamma$.
A very interesting open problem is to prove a full nonlinear stability result
under the assumption of spectral stability for a class of time-periodic
multi-dimensional solutions including the bifurcating time-periodic waves established here, similarly as was done in one dimension in \cite{BeSZ} in the strictly parabolic case.
Another interesting direction is the treatment of {\it spinning} shocks
and detonations in a cylindrical duct \cite{KS}, which should be treatable by similar arguments.

Finally, we note that the phenomenon of cellular instability/transverse $O(2)$ bifurcation of shock waves has so far been demonstrated mathematically only for a single example in MHD \cite{FT,M}.
The systematic cataloguing of this phenomenon for other waves and models by either numerical or analytic means we regard as an extremely interesting
open problem.

\vspace{0.3cm}

\noindent{\bf Acknowledgement.} Thanks to Arnd Scheel for helpful conversation
improving the exposition.

\section{Variational energy estimates}\label{s:energy}
In this section we introduce a useful energy functional associated
with  the perturbation equations for \eqref{sys},
and   prove the key energy estimate it satisfies; see Proposition~\ref{energyest} below. We start by
linearizing equation\eqref{sys} about $\bar\bu^\eps$. The linearized equation reads as follows:
\be\label{lin-sys}
\partial_t \bv=L(\varepsilon)\bv,
\ee
where the linear operators $L(\eps)$ are defined in \eqref{def-Leps}.
Next, we note that if $\bu$ is a solution of \eqref{sys} and if we denote by
$\bv(x,t):=\bu(x,t)-\bar \bu^{\eps}(x_1)$, then $\bv$ satisfies the perturbed equation:
\ba\label{pert-sys}
\partial_t \bv=
&L(\eps)\bv+\sum_{jk}\partial_j\Big(B^{jk}(\bv+\bar\bu^\eps)\partial_k(\bv+\bar\bu^\eps)-B^{jk}(\bar
\bu^{\varepsilon})\partial_k\bar \bu^{\eps}-B^{jk}(\bar
\bu^{\varepsilon})\partial_k \bv\\
&\quad-\partial_\bu B^{jk}(\bar
\bu^{\varepsilon})\bv\partial_k\bar \bu^{\varepsilon}\Big)-\sum_j\partial_j\Big(F^{j}(\bv+\bar\bu^\eps)-F^j (\bar \bu^{\eps})-\partial_\bu F^j
(\bar\bu^{\eps})\bv \Big).
\ea
Using assumptions $(A1)$, $(A2)$ and $(H0)$, we infer that
\be\label{part1-e-est}
\Big(F^{j}(\bv+\bar\bu^\eps)-F^j (\bar \bu^{\eps})-\partial_\bu F^j (\bar
\bu^{\eps})\bv \Big)=
\begin{pmatrix} 0\\ q^{j,\eps}(\bv)  \end{pmatrix};
\ee
\be\label{part2-e-est}
B^{jk}(\bv+\bar\bu^\eps)\partial_k(\bv+\bar\bu^\eps)-B^{jk}(\bar
\bu^{\varepsilon})\partial_k\bar \bu^{\eps}-B^{jk}(\bar
\bu^{\varepsilon})\partial_k \bv-\partial_\bu B^{jk}(\bar
\bu^{\varepsilon})\bv\partial_k\bar \bu^{\varepsilon}
=\begin{pmatrix} 0\\p^{j,\eps}(\bv)
\end{pmatrix}.
\ee
The functions $p^{jk,\eps},q^{j,\eps}$, $j,k=1,2$,  are defined as follows: for $\bv\in H^s(\R\times [-\pi,\pi],\C^n)$, $s\geq 2$,
\be\label{def-q-j}
q^{j,\eps}(\bv)=\int_0^1(1-t)\partial_\bu^2F_2^j(\bar\bu^\eps+t\bv)\bv\bv\,dt;
\ee
\be\label{def-p-jk}
p^{jk,\eps}(\bv)=\int_0^1\partial_\bu b^{jk}(\bar\bu^\eps+t\bv)\bv\partial_k\bv_2\,dt+
\int_0^1(1-t)\partial_\bu^2b^{jk}(\bar\bu^\eps+t\bv)\bv\bv\partial_k\bar\bu_2^\eps\,dt.
\ee
Using the functions introduced in \eqref{def-q-j} and \eqref{def-p-jk}, we obtain from \eqref{lin-sys}, \eqref{part1-e-est} and \eqref{part2-e-est}  that the perturbed equation \eqref{pert-sys} can be written as
\begin{equation}\label{inter1-pert}
\bv_t-L(\eps) \bv=\sum_{j,k}\partial_j \begin{pmatrix} 0\\ p^{jk,\eps}(\bv)\end{pmatrix}-\sum_j\partial_j\begin{pmatrix} 0\\ q^{j,\eps}(\bv)
\end{pmatrix}.
\end{equation}
By standard energy estimates, given any $ T> 0$ and $s \geq 2,$
there exists $\delta_*(T) >0,$ such that, if $\| \bv_0\|_{H^s} \leq
\delta_*(T),$ the perturbation system  has a unique solution $\bv \in
{\mathcal C}^0([0,T], H^s(\R\times[-\pi,\pi],\C^n))$, with $\bv(\cdot,0)=\bv_0$, and the bound
$$
\|\bv(\cdot, t)\|_{H^s}\le C\|\bv_0\|_{H^s},
$$
holds for any $t \in [0,T]$ and some $C >0$ that depends on $T$, but
neither on $\e$ nor on $t.$ Likewise, we have a formally quadratic
linearized truncation error $|q^{j,\eps}(\bv)|, |p^{jk,\eps}(\bv)|=O(|\bv|(|\bv|+|\nabla_x\bv|))$ for $\bv\in H^s(\R\times[-\pi,\pi],\C^n)$ with $\|\bv\|_{H^s}\le C$. Our goal in this
section is to establish a quadratic bound on the {\it
linearization error}: \be\label{quaderror} \|\bv(\cdot, T)-e^{L(\eps)
T}\bv_0\|_{H^s}\le C\|\bv_0\|_{H^s}^2. \ee
Here $\{e^{L(\eps)t}\}_{t\geq 0}$ denotes the
$C_0$-semigroup generated by $L(\eps)$, see, e.g., \cite{Lunardi,Z1,Z2}.
This inequality is far from evident in the absence of parabolic
smoothing as shown in \cite{TZ3}.
The corresponding bound does
not hold for quasilinear hyperbolic equations, nor as discussed in
\cite[Appendix A]{TZ3},
for systems of general hyperbolic--parabolic
type, due to loss of derivatives.
However, it follows easily for
systems satisfying assumptions (A1)--(A2).

Applying the differential operator $D^\alpha$ to the perturbation
system and multiplying the result system by
$A^{0,\eps}:=\begin{pmatrix}A^{0}_{11}(\bar \bu^{\eps}) & 0\\0 &
A^{0}_{22}(\bar \bu^{\eps})
\end{pmatrix}:=\begin{pmatrix}A^{0}_{11}& 0\\0 &
A^{0}_{22}
\end{pmatrix}$, we obtain that
\ba\label{A0-mult-eq1}
A^{0,\eps}D^\alpha\partial_t
\bv=&A^{0,\eps}D^\alpha\sum_{jk}\partial_j\Big(B^{jk}(\bar
\bu^{\varepsilon})\partial_k
\bv\Big)+A^{0,\eps}D^\alpha\sum_{jk}\partial_j\Big(\partial_\bu
B^{jk}(\bar \bu^{\varepsilon})\bv\partial_k\bar
\bu^{\varepsilon}\Big)\\&\quad-A^{0,\eps}D^\alpha\sum_j\partial_j (A^{j}\bv)
-A^{0,\eps}D^\alpha\sum_j\partial_j\begin{pmatrix} 0\\ q^{j,\eps}(\bv)
\end{pmatrix}\\
&\quad+A^{0,\eps}D^\alpha\sum_{j,k}\partial_j \begin{pmatrix} 0\\ p^{jk,\eps}(\bv)
\end{pmatrix}.
\ea
To prove our energy estimate we need the following identities:
\ba\label{A0-mult-eq2}
A^{0,\eps}D^\alpha\partial_j\Big(B^{jk}(\bar
\bu^{\varepsilon})\partial_k \bv\Big)&=
\partial_{j}\Big(A^{0,\eps}_{22}b^{jk}(\bar\bu^\eps)D^{\alpha}\partial_k \bv_2
\Big)-(\partial_j A^{0,\eps}_{22})b^{jk}(\bar
\bu^{\eps})D^{\alpha}\partial_k \bv_2\\
&\quad+A^{0,\eps}_{22}\partial_j\Big(\sum_{\beta\leq\alpha;
|\beta|=1}\begin{pmatrix}\alpha\\\beta
\end{pmatrix}D^{\beta}b^{jk}(\bu^\eps)D^{\alpha-\beta}\partial_k \bv_2\Big)\\
&\quad+A^{0,\eps}_{22}\partial_j\Big(\sum_{\beta\leq\alpha;
|\beta|>1}\begin{pmatrix}\alpha\\
\beta
\end{pmatrix}D^{\beta}b^{jk}(\bu^\eps)D^{\alpha-\beta}\partial_k \bv_2\Big);
\ea
\ba\label{A0-mult-eq3}
A^{0,\eps}D^\alpha\partial_j
(A^{j}\bv)=&A^{0,\eps}A^jD^{\alpha}\partial_j \bv
+A^{0,\eps}D^{\alpha}(\partial_j A^j \bv)\\
&\quad+A^{0,\eps}
\sum_{\beta\leq\alpha; |\beta|\geq1}\begin{pmatrix}\alpha\\ \beta
\end{pmatrix}D^{\beta}A^j D^{\alpha-\beta}\partial_j \bv.
\ea
Next, we briefly mention the weak Moser inequality in a channel, another tool needed in our analysis. If    $\kappa\geq1$, $\alpha_1,\dots,\alpha_m$ are multi-indexes, $s=\sum\limits_{i=1}^m|\alpha_i|$ and $h_1,\dots,h_m\in H^{\max\{s,\kappa\}}(\RR\times[-\pi,\pi],\C^n)$, then
\begin{equation}\label{moser}
\|(\partial^{\alpha_1}h_1)\cdot\dots\cdot
(\partial^{\alpha_m}h_m)\|_{L^2}
\leq \Big(\sum^m_{i=1}\|h_i\|_{H^s}\Big)
\Big(\prod_{j\neq i}\|\hat h_j\|_{L^1}\Big)
\leq C\Big(\sum^m_{i=1}\|h_i\|_{H^s}\Big)
\Big(\prod_{j\neq i}\|h_j\|_{H^\kappa}\Big).
\end{equation}
The proof of \eqref{moser} is based on the Hausdorff-Young inequality  and the strong Sobolev embedding principle, see, e.g., \cite[Lemma 1.5]{Z2} for the whole-space case. As an application of the weak Moser inequality, we can prove the following lemma.
\begin{lemma}\label{moserbd}
Assume Hypotheses (A1)--(A2) and (H0). Then,
for any multi-index $\alpha\in\mathbb{N}^2$ with $|\alpha|\geq 2$ and any $\bv\in H^{|\alpha|}(\RR\times[-\pi,\pi],\C^n)$ with $\bv_2\in H^{|\alpha|+1}(\RR\times[-\pi,\pi],\C^r)$, the functions $D^{\alpha}q^{j,\eps}(\bv)$ and $D^{\alpha}p^{jk,\eps}(\bv)$ belong to $L^2(\R\times[-\pi,\pi],\C^n)$ and the following estimate of their norms holds:
$$\max\{\|D^{\alpha}q^{j,\eps}(\bv)\|_{L^2},\|D^{\alpha}p^{jk,\eps}(\bv)\|_{L^2}\}
\lesssim\big(\|\bv\|_{H^{|\alpha|}} +
\|\bv_2\|_{H^{|\alpha|+1}}\big) \big( \|\bv\|_{H^{|\alpha|}}+
\|\bv\|_{H^{|\alpha|}}^{|\alpha|}\big). $$
\end{lemma}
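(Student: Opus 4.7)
The plan is to reduce everything to a straightforward application of the weak Moser inequality \eqref{moser} after expanding $D^\alpha$ by the combined Leibniz/Faà di Bruno rule on the integrands in \eqref{def-q-j} and \eqref{def-p-jk}. By Hypothesis (H0) with $\nu\geq 5$, the functions $\partial_\bu^2 F_2^j$, $\partial_\bu b^{jk}$ and $\partial_\bu^2 b^{jk}$ and all of their derivatives up to order $|\alpha|$ are smooth and bounded on compact sets. The profile $\bar\bu^\eps$ and its derivatives of all relevant orders are bounded on $\R$ (being a viscous shock profile with constant limits at $\pm\infty$), so all arguments $\bar\bu^\eps+t\bv$ stay in a fixed compact set provided $\|\bv\|_{H^{|\alpha|}}\leq 1$ via the Sobolev embedding $H^{|\alpha|}(\R\times[-\pi,\pi])\hookrightarrow L^\infty$ (which holds since $|\alpha|\geq 2$).

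For $q^{j,\eps}(\bv)$, the first step is to commute $D^\alpha$ under the $t$-integral and expand using the Leibniz rule and the chain rule applied to $\partial_\bu^2 F_2^j(\bar\bu^\eps+t\bv)$. This produces a finite sum of terms of the form
\begin{equation*}
\int_0^1 (1-t)\,G_{\alpha_0,\ldots,\alpha_m}(\bar\bu^\eps+t\bv)\,\prod_{i=1}^m D^{\alpha_i}\widetilde{\bv}_i\,dt,
\end{equation*}
where each $G_{\alpha_0,\ldots,\alpha_m}$ is a bounded smooth matrix-valued function, each $\widetilde\bv_i$ equals either $\bv$ or $\bar\bu^\eps$, at least two of the factors are $\bv$ (the original quadratic structure), $2\leq m\leq |\alpha|+2$, and $\sum_{i=1}^m|\alpha_i|=|\alpha|$ with $|\alpha_i|\leq |\alpha|$. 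I then apply the weak Moser inequality \eqref{moser} with $\kappa=2$ to each such product, drop the $\bar\bu^\eps$-factors (which contribute bounded constants), and observe that the resulting estimate is
\begin{equation*}
\Big\|\prod_{i=1}^m D^{\alpha_i}\bv\Big\|_{L^2}\lesssim \|\bv\|_{H^{|\alpha|}}\,\|\bv\|_{H^2}^{m-1}\lesssim \|\bv\|_{H^{|\alpha|}}^{m}.
\end{equation*}
Summing the finitely many contributions yields the claimed bound: the linear-in-$\|\bv\|_{H^{|\alpha|}}$ piece comes from the factor $\bv$ itself when $m=2$, while the term $\|\bv\|_{H^{|\alpha|}}^{|\alpha|}$ absorbs the largest admissible value of $m$.

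For $p^{jk,\eps}(\bv)$, the treatment is identical except for one cosmetic difference: the integrand carries an extra factor of $\partial_k\bv_2$ or $\partial_k\bar\bu_2^\eps$. The first integral in \eqref{def-p-jk} has exactly the structure of the $q$-case but with one factor $\partial_k\bv_2$ forced in place of a $\bv$; after expansion, one of the $D^{\alpha_i}$-factors has the form $D^{\alpha_i}(\partial_k\bv_2)$, which costs one additional derivative on $\bv_2$ and is exactly why $\bv_2\in H^{|\alpha|+1}$ is needed. The second integral in \eqref{def-p-jk} has the $\partial_k$ landing on $\bar\bu_2^\eps$, which is harmless since $\|\partial_k\bar\bu_2^\eps\|_{W^{|\alpha|,\infty}}<\infty$. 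Using Moser once more with the distinguished $\partial_k\bv_2$-factor estimated in $\|\cdot\|_{H^{|\alpha|}}$ (via $\|\bv_2\|_{H^{|\alpha|+1}}$) yields the desired bound with $\|\bv\|_{H^{|\alpha|}}$ replaced by $\|\bv\|_{H^{|\alpha|}}+\|\bv_2\|_{H^{|\alpha|+1}}$ on the first factor.

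The only real obstacle is the combinatorial bookkeeping: one must verify that in every term produced by the multi-variable Faà di Bruno formula, (a) each $|\alpha_i|$ satisfies $|\alpha_i|\leq |\alpha|$ (so that the Moser step with $s=\max_i|\alpha_i|$ is controlled by $\|\bv\|_{H^{|\alpha|}}$), and (b) the count $m$ of derivative-of-$\bv$ factors is bounded by $|\alpha|+2$, giving the power $|\alpha|$ on the right-hand side. Both are automatic from the way chain-rule expansions work on a function of the form $G(\bar\bu^\eps+t\bv)\,\bv\,\bv$, so the estimate follows without delicate cancellation or smoothing arguments.
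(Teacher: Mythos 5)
Your argument is correct and follows essentially the same route as the paper, which proves the lemma by exactly this combination of the Leibniz/chain-rule expansion of $D^{\alpha}q^{j,\eps}$ and $D^{\alpha}p^{jk,\eps}$, the weak Moser inequality \eqref{moser}, and the structural properties of $F^j$, $b^{jk}$ from Section~\ref{assumptions} (the paper merely cites these ingredients rather than writing out the bookkeeping). Your explicit restriction $\|\bv\|_{H^{|\alpha|}}\leq 1$, used both to keep $\bar\bu^\eps+t\bv$ in a compact set and to absorb the top-degree ($m=|\alpha|+2$) terms into $\|\bv\|_{H^{|\alpha|}}^{|\alpha|}$, is consistent with the standing smallness assumption under which the lemma is stated and applied in the energy estimates, so it is not a gap.
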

The proof of the lemma follows directly from the week Moser inequality, \eqref{moser}, and the properties of the functions $F^j$ and $b^{jk}$, $j,k\in\{1,2\}$, stated in Section~\ref{assumptions}, see, e.g., \cite{Ta,TZ3, Z2}. The main result of this section reads as follows:
\begin{proposition}\label{energyest}
Assume Hypotheses (A1)--(A2) and (H0). Then for any $2 \le s \le \nu-1$, $T_0>0$,
there exists
some $C=C(T_0)>0$ such that for any $\bv$ satisfying  \eqref{inter1-pert} with
initial data $\bv(\cdot, 0)=\bv_0$ sufficiently small in $H^s(\RR\times[-\pi,\pi],\C^n)$, the following inequalities hold true:
\ba\label{en} \|\bv(\cdot, T)\|_{H^s}^2+\int_0^T \|\bv_2(\cdot,
t)\|_{H^{s+1}}^2\, dt
&\le C\|\bv_0\|_{H^s}^2\quad\mbox{for all}\quad T\in[0,T_0],
\ea
\ba\label{varen} \|\bv(\cdot, T)-e^{L(\eps)T}\bv_0\|_{H^s}&\le
C\|\bv_0\|_{H^s}^2\quad\mbox{for all}\quad T\in[0,T_0]. \ea
\end{proposition}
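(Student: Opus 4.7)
The plan is a classical symmetric-hyperbolic/parabolic energy estimate, using the multiplier $A^{0,\eps}D^\alpha\bv$ tested against $D^\alpha$ applied to the perturbation equation \eqref{inter1-pert} and summed over $|\alpha|\leq s$. The structural conditions (A1)--(A2) are precisely what makes the argument close in the partially parabolic setting: (A2)(i)--(ii) give a symmetrizable first-order part, (A2)(iii) supplies parabolic dissipation in $\bv_2$, the positivity of $A^{0,\eps}$ makes $\sum_{|\alpha|\leq s}\langle A^{0,\eps}D^\alpha\bv,D^\alpha\bv\rangle$ equivalent to $\|\bv\|_{H^s}^2$, and (A1) confines the nonlinearity to the parabolic block. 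Part (ii) is then deduced by applying the same energy identity to $w:=\bv-e^{L(\eps)t}\bv_0$, which satisfies $\partial_t w=L(\eps)w+\mathcal{N}(\bv)$ with $w(0)=0$ and a purely quadratic source $\mathcal{N}(\bv):=\sum_{j,k}\partial_j\begin{pmatrix}0\\p^{jk,\eps}(\bv)\end{pmatrix}-\sum_j\partial_j\begin{pmatrix}0\\q^{j,\eps}(\bv)\end{pmatrix}$.

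For the linear part, applying $D^\alpha$ to \eqref{inter1-pert} and pairing in $L^2$ with $A^{0,\eps}D^\alpha\bv$ as in \eqref{A0-mult-eq1}--\eqref{A0-mult-eq3}, the principal hyperbolic contribution $\langle A^{0,\eps}A^j\partial_j D^\alpha\bv,D^\alpha\bv\rangle$ yields, after integration by parts and use of (A2)(i), a commutator of the form $-\tfrac12\int\partial_j(A^{0,\eps}A^j)|D^\alpha\bv|^2$, bounded by $C\|\bv\|_{H^s}^2$ since $\bar\bu^\eps$ is smooth and bounded with its derivatives. The viscous contribution, using the block structure \eqref{rep-Bjk} and (A2)(iii), produces after integration by parts the coercive dissipation $\theta\sum_j\|D^\alpha\partial_j\bv_2\|_{L^2}^2$. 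The remaining commutator pieces in \eqref{A0-mult-eq2}--\eqref{A0-mult-eq3} together with the terms involving $\partial_\bu B^{jk}(\bar\bu^\eps)\bv\,\partial_k\bar\bu^\eps$ carry smooth $\bar\bu^\eps$-dependent coefficients with at most $s$ derivatives on $\bv$; the weak Moser inequality \eqref{moser} bounds them by $C\|\bv\|_{H^s}^2$, with any factor carrying a derivative on $\bv_2$ absorbed into the dissipation via Young's inequality.

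The nonlinear source is the delicate step, and this is where (A1) is decisive. Since $\mathcal N(\bv)$ lies entirely in the parabolic block, we move $\partial_j$ by integration by parts onto $A^{0,\eps}_{22}D^\alpha\bv_2$, producing terms of the form $\int A^{0,\eps}_{22}(\partial_j D^\alpha\bv_2)\cdot D^\alpha\bigl(p^{jk,\eps}(\bv)-q^{j,\eps}(\bv)\bigr)$. Bounding the second factor by Lemma~\ref{moserbd} as $\lesssim(\|\bv\|_{H^s}+\|\bv_2\|_{H^{s+1}})\|\bv\|_{H^s}$ (using $\|\bv\|_{H^s}\leq 1$ to absorb the higher power) and the first by $\|\bv_2\|_{H^{s+1}}$, Cauchy--Schwarz combined with smallness of $\|\bv\|_{H^s}$ and Young's inequality absorb the cubic piece $\lesssim\|\bv\|_{H^s}\|\bv_2\|_{H^{s+1}}^2$ into $\tfrac{\theta}{2}\|\bv_2\|_{H^{s+1}}^2$, leaving $C\|\bv\|_{H^s}^2$. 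Summing over $|\alpha|\leq s$ we arrive at
\[
\tfrac{d}{dt}\|\bv\|_{H^s}^2+\tfrac{\theta}{2}\|\bv_2\|_{H^{s+1}}^2\leq C\|\bv\|_{H^s}^2,
\]
and Gronwall on $[0,T]\subset[0,T_0]$ together with integration of the dissipation yields \eqref{en}. For \eqref{varen}, applying the identical identity to $w$ produces the same left-hand side in $\|w\|_{H^s}^2$, while the source $\mathcal N(\bv)$ now contributes, after the same integration-by-parts/Moser step, a term $\lesssim\|\bv\|_{H^s}(\|\bv\|_{H^s}+\|\bv_2\|_{H^{s+1}})\|w\|_{H^s}$; integrating in time and using \eqref{en} to control $\int_0^T(\|\bv\|_{H^s}^2+\|\bv_2\|_{H^{s+1}}^2)\,dt\lesssim\|\bv_0\|_{H^s}^2$, Gronwall applied to $\|w\|_{H^s}^2$ returns $\|w(T)\|_{H^s}\leq C\|\bv_0\|_{H^s}^2$. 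The main obstacle, as emphasized in \cite[Appendix A]{TZ3}, is that in the absence of parabolic smoothing no derivative can be lost at top order; the saving feature is (A1), which places all nonlinearity inside the parabolic block $\bv_2$, so that the extra derivative generated by $D^\alpha\partial_j p^{jk,\eps}(\bv)$ can be absorbed by the parabolic dissipation $\|\bv_2\|_{H^{s+1}}^2$ rather than producing an unbalanced loss against the hyperbolic block.
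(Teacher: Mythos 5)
Your proposal is correct and follows essentially the same route as the paper: the $A^{0,\eps}$-weighted $H^s$ energy functional, the identities \eqref{A0-mult-eq2}--\eqref{A0-mult-eq3} with (A2)(i),(iii), the Moser bound of Lemma~\ref{moserbd}, absorption of commutator and nonlinear terms into the dissipation $\|\bv_2\|^2_{H^{s+1}}$, and the same error-equation argument for $E=\bv-e^{L(\eps)t}\bv_0$ with $E(0)=0$. One cosmetic slip: after integrating the divergence-form source by parts, the factor is $\|E_2\|_{H^{s+1}}$ rather than $\|w\|_{H^s}$, so it must first be absorbed into the dissipation by weighted Young (as in \eqref{part-energy-6}--\eqref{part-energy-7}) before applying Gronwall, exactly as you already do in the estimate for $\bv$ itself.
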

\begin{proof}
Since $A^{0,\eps}$ is symmetric and positive definite, we obtain that the energy functional
\be\label{def-energy}
\mathcal E(\bv):=\frac{1}{2}\sum_{|\alpha|\leq s} \langle D^{\alpha}\bv,
A^{0,\eps}D^{\alpha} \bv\rangle_{L^2}
\ee
defines a norm equivalent to $\|\cdot\|_{H^s}$, i.e., $\mathcal
E(\cdot)^{1/2}\sim \|\cdot\|_{H^s}$. Using \eqref{A0-mult-eq2}, it follows that
\ba\label{diff-energy}
\partial_t \mathcal{E}(\bv)=\sum_{|\alpha|\leq s} \langle D^{\alpha}\bv,
A^{0,\eps}D^{\alpha} \partial_t \bv\rangle_{L^2}.
\ea
Next, we estimate the right-hand side of \eqref{diff-energy}, using \eqref{pert-sys}, \eqref{A0-mult-eq2} and \eqref{A0-mult-eq3} . We break this long estimate into three separate parts. Using the hypotheses from Section~\ref{assumptions} we have that for any $\bv\in H^s(\RR\times[-\pi,\pi],\CC^n)$ with $\|v\|_{H^s}\leq \delta_0\leq\delta_*(T_0)$ the following holds:
\ba\label{rhs-1}
\sum_{jk}&\sum_{|\alpha|\leq s} \Big\langle D^{\alpha}\bv,
A^{0,\eps}D^\alpha\partial_j\Big(B^{jk}(\bar
\bu^{\varepsilon})\partial_k \bv\Big)\Big\rangle_{L^2} =
\sum_{jk}\sum_{|\alpha|\leq s}\Big\langle D^{\alpha}\bv_2,
\partial_{j}\Big(A^{0,\eps}_{22}b^{jk}(\bar\bu^\eps)D^{\alpha}\partial_k
\bv_2 \Big)\Big\rangle_{L^2}\\
&\qquad\qquad+\delta_0
\|\bv_2\|^2_{H^{s+1}}+C(\delta_0)\|\bv_2\|^2_{H^{s}}+\mathcal{O}(1)\|\bv\|^2_{H^{s}}\\
&=-\sum_{jk}\sum_{|\alpha|\leq s}\Big\langle D^{\alpha}\partial_j \bv_2,
\Big(A^{0,\eps}_{22}b^{jk}(\bar\bu^\eps)D^{\alpha}\partial_k \bv_2
\Big)\Big\rangle_{L^2}+\delta_0
\|\bv_2\|^2_{H^{s+1}}+C(\delta_0)\|\bv_2\|^2_{H^{s}}+\mathcal{O}(1)\|\bv\|^2_{H^{s}}\\
 &\leq -\theta \|\bv_2\|^2_{H^{s+1}}+\delta_0
\|\bv_2\|^2_{H^{s+1}}
+C(\delta_0)\|\bv_2\|^2_{H^{s}}+\mathcal{O}(1)\|\bv\|^2_{H^{s}}\\
&\leq(-\theta+\delta_0)\|\bv_2\|^2_{H^{s+1}}+(C(\delta_0)+\mathcal{O}(1))\|\bv\|^2_{H^{s}}.
\ea
In addition, one readily checks that
\ba\label{rhs-2}
\sum_{jk}\sum_{|\alpha|\leq s} \Big\langle D^{\alpha}\bv,
A^{0,\eps}D^\alpha\partial_j&\Big(\partial_\bu B^{jk}(\bar
\bu^{\varepsilon})\bv\partial_k\bar \bu^{\varepsilon}\Big)\Big\rangle_{L^2}=
\sum_{jk}\sum_{|\alpha|\leq s} \langle D^{\alpha}\bv_2,
A^{0,\eps}_{22}D^\alpha\partial_j\Big(\partial_\bu b^{jk}(\bar
\bu^{\varepsilon})\bv\partial_k\bar
\bu_2^{\varepsilon}\Big)\Big\rangle_{L^2}\\
&= \sum_{jk}\sum_{|\alpha|\leq s} \Big\langle
\partial_j\Big((A^{0,\eps}_{22})^* D^{\alpha}\bv_2\Big), D^\alpha\Big(\partial_\bu
b^{jk}(\bar \bu^{\varepsilon})\bv\partial_k\bar
\bu_2^{\varepsilon}\Big)\Big\rangle_{L^2}\\
&\leq  \mathcal{O}(1)\|\bv\|^2_{H^{s}}+\delta_0
\|\bv_2\|^2_{H^{s+1}}+C(\delta_0)\|\bv\|^2_{H^{s}}.
\ea
Moreover, we infer that
\ba\label{rhs-3}
\sum_{j}&\sum_{|\alpha|\leq s} \langle D^{\alpha}\bv,
A^{0,\eps}D^\alpha\partial_j (A^{j}\bv)\rangle_{L^2}=
\sum_{j}\sum_{|\alpha|\leq s} \langle D^{\alpha}\bv,
A^{0,\eps}A^jD^{\alpha}\partial_j
\bv\rangle_{L^2}+\mathcal{O}(1)\|\bv\|^2_{H^{|\alpha|}}\\
&=\sum_{j}\sum_{|\alpha|\leq s} \langle D^{\alpha}\bv_1,
A_{11}^{0,\eps}A^j_{11}D^{\alpha}\partial_j \bv_1\rangle_{L^2} +\sum_{j}\sum_{|\alpha|\leq s} \langle D^{\alpha}\bv_1,
A_{11}^{0,\eps}A^j_{12}D^{\alpha}\partial_j \bv_2\rangle_{L^2}+\mathcal{O}(1)\|\bv\|^2_{H^{|\alpha|}}\\
&\quad+\sum_{j}\sum_{|\alpha|\leq s} \langle D^{\alpha}\bv_2,
A_{11}^{0,\eps}A^j_{21}D^{\alpha}\partial_j \bv_1\rangle_{L^2} +\sum_{j}\sum_{|\alpha|\leq s} \langle D^{\alpha}\bv_2,
A_{11}^{0,\eps}A^j_{22}D^{\alpha}\partial_j \bv_2\rangle_{L^2}\\
&=\sum_{j}\sum_{|\alpha|\leq s} -\frac{1}{2}\langle D^{\alpha}\bv_1,
\partial_ j(A_{11}^{0,\eps}A^j_{11})D^{\alpha}\bv_1\rangle _{L^2}+\sum_{j}\sum_{|\alpha|\leq s} \langle D^{\alpha}\bv_1,
A_{11}^{0,\eps}A^j_{12}D^{\alpha}\partial_j \bv_2\rangle_{L^2}+\mathcal{O}(1)\|\bv\|^2_{H^{s}}\\
&\quad +\sum_{j}\sum_{|\alpha|\leq s} \langle
\partial_j\Big((A_{11}^{0,\eps}A^j_{21})^*D^{\alpha}\bv_2\Big),
D^{\alpha} \bv_1\rangle _{L^2}+\sum_{j}\sum_{|\alpha|\leq s} \langle D^{\alpha}\bv_2,
A_{11}^{0,\eps}A^j_{22}D^{\alpha}\partial_j
\bv_2\rangle_{L^2}\\
&\leq \mathcal{O}(1)\|\bv_1\|^2_{H^s}+\delta_0\|\bv_2\|^2_{H^{s+1}}+C(\delta_0)\|\bv_1\|^2_{H^s} +\delta_0\|\bv_2\|^2_{H^{s+1}}+C(\delta_0)\|\bv\|^2_{H^s}\\
&\quad+\delta_0\|\bv_2\|^2_{H^{s+1}}+C(\delta_0)\|\bv_2\|^2_{H^s}+\mathcal{O}(1)\|\bv\|^2_{H^{s}}\leq  3\delta_0\|\bv_2\|^2_{H^{s+1}}+C(\delta_0)\|\bv\|^2_{H^s}.
\ea
Next, we introduce the function $Q^\eps: H^s(\R\times [-\pi,\pi],\C^n)\to L^2(\R\times [-\pi,\pi],\C^n) $ defined by
\be\label{def-Qeps}
Q^{\eps}(\bv):= -A^{0,\eps}D^\alpha\sum_j\partial_j\begin{pmatrix} 0\\
q^{j,\eps}(\bv)
\end{pmatrix}+A^{0,\eps}D^\alpha\sum_{j,k}\partial_j \begin{pmatrix} 0\\ p^{jk,\eps}(\bv)
\end{pmatrix}.
\ee
From Lemma~\ref{moserbd}, we conclude that for any $\bv\in H^s(\R\times [-\pi,\pi],\C^n)$ the following estimate holds:
\ba\label{est-Qeps}
\sum_{|\alpha|\leq s}& \langle D^{\alpha}\bv, Q^{\eps}(\bv)\rangle_{L^2}=
\sum_{|\alpha|\leq s} \langle D^{\alpha}\bv_2, Q^{\eps}(\bv)\rangle_{L^2}\\
&\leq \mathcal{O}(1)\|\bv_2\|_{H^{s+1}} \big(\|\bv\|_{H^s} +
\|\bv_2\|_{H^{s+1}}\big) \big( \|\bv\|_{H^s}+ \|\bv\|_{H^s}^s\big).
\ea
Finally, from \eqref{rhs-1}--\eqref{est-Qeps} we obtain that
\ba\label{part-energy-1}
\partial_t\mathcal{E}(\bv)&\leq\mathcal{O}(1)\|\bv_2\|_{H^{s+1}}
\big(\|\bv\|_{H^s} + \|\bv_2\|_{H^{s+1}}\big) \big( \|\bv\|_{H^s}+
\|\bv\|_{H^s}^s\big)\\
&\quad-\theta\|\bv_2\|^2_{H^{s+1}}+5\delta_0\|\bv_2\|^2_{H^{s+1}}+\mathcal{O}(1)\|\bv\|^2_{H^s}.
\ea
We choose $\delta_0>0$ such that $5\delta_0<\theta/2$. So long as
$\|\bv\|_{H^s}$ remains sufficiently small, we infer that
\ba\label{part-energy-2}
\partial_t \CalE(\bv) &\le -(\theta/2) \|\bv_2\|_{H^{s+1}}^2
+O(\|\bv\|_{H^s}^2)\\
&\le -(\theta/2) \|\bv_2\|_{H^{s+1}}^2 +C\CalE(\bv), \ea from which
(\ref{en}) follows by Gronwall's inequality since
$$
\CalE(\bv(T))+ (\theta/2)\int_0^T \|\bv_2(\cdot,t)\|_{H^{s+1}}^2 dt \le
C_2\CalE(\bv_0).
$$
To prove \eqref{varen}, we first note that the error function $E(x,t):=\bv(x, t)-e^{L^{\eps}t}\bv_0(x)$ satisfies the equation
\be\label{eq--error}
\partial_t E=L(\eps)E
-\sum_j\partial_j\begin{pmatrix} 0\\ q^{j,\eps}(\bv)
\end{pmatrix}+\sum_{j,k}\partial_j \begin{pmatrix} 0\\ p^{jk,\eps}(\bv)
\end{pmatrix},
\ee
with initial condition $E(\cdot, 0)=0$. We note that equation \eqref{eq--error} has a structure similar to that of \eqref{pert-sys}. Using the same argument as in \eqref{A0-mult-eq2} and \eqref{A0-mult-eq3} one can show that 
\ba\label{part-energy-3}
A^{0,\eps}D^{\alpha}\partial_t E&=A^{0,\eps}D^{\alpha}\Bigg[\sum_{jk}\partial_j\Big(B^{jk}(\bar
\bu^{\eps})\partial_k
E\Big)+\sum_{jk}\partial_j\Big(\partial_\bu B^{jk}(\bar
\bu^{\eps})E\partial_k\bar
\bu^{\eps}\Big)-\sum_j\partial_j (A^{j}E)\\
&\quad-\sum_j\partial_j\begin{pmatrix} 0\\ q^{j,\eps}(\bv)\big)
\end{pmatrix}+\sum_{j,k}\partial_j \begin{pmatrix} 0\\ p^{jk,\eps}(\bv)
\end{pmatrix}\Bigg]\\
&=\sum_{jk}A^{0,\eps}D^{\alpha}\partial_j\Big(B^{jk}(\bar
\bu^{\eps})\partial_k
E\Big)+\sum_{jk}A^{0,\eps}D^{\alpha}\partial_j\Big(\partial_\bu B^{jk}(\bar
\bu^{\eps})E\partial_k\bar
\bu^{\eps}\Big)\\
&\quad-\sum_jA^{0,\eps}D^{\alpha}\partial_j (A^{j}E)-\sum_jA^{0,\eps}D^{\alpha}\partial_j\begin{pmatrix} 0\\ q^{j,\eps}(\bv)
\end{pmatrix}+\sum_{j,k}A^{0,\eps}D^{\alpha}\partial_j \begin{pmatrix} 0\\ p^{jk,\eps}(\bv)
\end{pmatrix}\\
&= \sum_{jk}A^{0,\eps}D^{\alpha}\partial_j\Big(B^{jk}(\bar
\bu^{\eps})\partial_k
E\Big)+\sum_{jk}A^{0,\eps}D^{\alpha}\partial_j\Big(\partial_\bu B^{jk}(\bar
\bu^{\eps})E\partial_k\bar
\bu^{\eps}\Big)\\
&\quad -\sum_jA^{0,\eps}D^{\alpha}\partial_j (A^{j}E)-Q^{\eps}(\bv).
\ea
From the definition of the energy functional in \eqref{def-energy} we note that
$
\mathcal{E}(E)\sim \|E\|^2_{H^s}$. In addition, its time evolution $\partial_t \mathcal{E}(E)$ satisfies the identity
\ba\label{part-energy-5}
\partial_t \mathcal{E}(E) &=\sum_{|\alpha|\leq s}\langle D^{\alpha}E, A^{0,\eps}D^{\alpha}\partial_t E \rangle_{L^2}=\sum_{|\alpha|\leq s}\Big\langle D^{\alpha}E,  \sum_{jk}A^{0,\eps}D^{\alpha}\partial_j\Big(B^{jk}(\bar\bu^{\eps})\partial_k
E\Big) \Big\rangle_{L^2}\\
&\quad+\sum_{|\alpha|\leq s}\Big\langle D^{\alpha}E,  \sum_{jk}A^{0,\eps}D^{\alpha}\partial_j\Big(\partial_\bu B^{jk}(\bar
\bu^{\eps})E\partial_k\bar
\bu^{\eps}\Big) \Big\rangle_{L^2} -\sum_{|\alpha|\leq s}\langle D^{\alpha}E, \sum_jA^{0,\eps}D^{\alpha}\partial_j (A^{j}E) \rangle_{L^2}\\
& \quad-\sum_{|\alpha|\leq s}\langle D^{\alpha}E, Q^{\eps}(\bv)\rangle_{L^2}.
\ea
To estimate the first three terms in the identity above we argue in the same way as in
\eqref{rhs-1}--\eqref{rhs-3}. The fourth term can be controlled by
\be\label{part-energy-6}
\sum_{|\alpha|\leq s}\langle D^{\alpha}E, -Q^{\eps}(\bv)\rangle_{L^2}\leq  \mathcal{O}(1)\|E_2\|_{H^{s+1}} \big(\|\bv\|_{H^s} +
\|\bv_2\|_{H^{s+1}}\big) \big( \|\bv\|_{H^s}+ \|\bv\|_{H^s}^s\big).
\ee
Combining all of these estimates together and using the weighted Young's inequality, we have, as long as $\|\bv\|_{H^s}$ remains sufficiently small, that
\be\label{part-energy-7}
\partial_t \mathcal{E}(E)\leq-\frac{\theta}{2}\|E_2\|^2_{H^{s+1}}+\mathcal{O}(1)\|E\|^2_{H^{s}}+\mathcal{O}(1)\|\bv\|^2_{H^s}\Big( \|\bv\|^2_{H^s}+\|\bv_2\|^2_{H^{s+1}} \Big).
\ee
Also, since $\|\bv(\cdot,t)\|_{H^s}\leq C\|\bv_0\|_{H^s}$, we obtain that
\be\label{energy-part-8}
\partial_t \mathcal{E}(E)\leq-\frac{\theta}{2}\|E_2\|^2_{H^{s+1}}+\mathcal{O}(1)\|E\|^2_{H^{s}}+\mathcal{O}(1)\|\bv_0\|^2_{H^s}\Big( \|\bv\|^2_{H^s}+\|\bv_2\|^2_{H^{s+1}} \Big).
\ee
Using \eqref{en}, Gronwall's inequality and the fact that the error function satisfies the initial condition $E(\cdot, 0)=0$, we conclude that
$$
\mathcal{E}(E(T))\leq\mathcal{O}(1)\|\bv_0\|^2_{H^s}\int_{0}^{T}\Big( \|\bv(\cdot,t)\|^2_{H^s}+\|\bv_2\|^2_{H^{s+1}}  \Big)\,dt\leq\mathcal{O}(1)\|\bv_0\|^4_{H^s},
$$
which implies that $\|E(\cdot,T)\|_{H^s}\leq \mathcal{O}(1)\|\bv_0\|^2_{H^s}$, proving the lemma.
\end{proof}

\section{O(2) bifurcation for the general case}\label{O2-bif}

In this
and the following
section we prove our main result of this paper, the existence of $O(2)$-Hopf bifurcation under a spectral criterion (Hypothesis $D_\eps$) described in detail below. More precisely, we are looking to prove the existence of periodic solution of equation \eqref{pert-sys}, of period $T>0$, by solving for $T$ as a function of the initial data $\bv(0)$ and the bifurcation parameter $\eps$ in the fixed point equation associated to the return map of \eqref{pert-sys}. Furthermore,
we reduce this infinite dimensional nonlinear system to a finite dimensional system by using the a special variant of the Lyapunov-Schmidt reduction method, introduced in \cite{TZ2} and refined in \cite{TZ3}.

In what follows we are going to consider the operator $L(\eps)$ defined in \eqref{def-Leps} as a second order differential operator from
$H^2(\R\times [-\pi,\pi],\C^n)$ to $L^2(\R\times [-\pi,\pi],\C^n)$. By taking Fourier Transform in
$x_2\in[-\pi,\pi]$, we can identify $L^2(\R\times [-\pi,\pi],\C^n)$ with $\ell^2(\Z,L^2(\R,\C^n))$ and
$H^m(\R\times [-\pi,\pi],\C^n)$ with $\bigoplus_{k\in\Z}\hat{H}_k^m(\R,\C^n)$, $m=1,2$, where the Hilbert space $\hat{H}_k^m(\R,\C^n)$ is the Sobolev space $H^m(\R,\C^n)$, $m=1,2$, with the scalar products
\be\label{Hk1-product}
\langle f,g\rangle_{\hat{H}_k^1(\R,\C^n)}=(1+k^2)\langle f,g\rangle_{L^2}+\langle \partial_{x_1}f,\partial_{x_1}g\rangle_{L^2};
\ee
\be\label{Hk2-product}
\langle f,g\rangle_{\hat{H}_k^2(\R,\C^n)}=(1+k^2+k^4)\langle f,g\rangle_{L^2}+(1+2k^2)\langle \partial_{x_1}f,\partial_{x_1}g\rangle_{L^2}+\langle \partial_{x_1}^2f,\partial_{x_1}^2g\rangle_{L^2}.
\ee
The operator $L(\eps)$ can be identified with $(L_k(\eps))_{k\in\Z}$, where $L_k(\eps):\hat{H}_k^2(\R,\C^n)\to L^2(\R,\C^n)$ are defined by
$L_k(\eps)=\widehat{L(\eps)}(k)$. A simple computation shows that
\begin{equation}\label{Lk-eps-rep}
L_k(\eps)=L_0(\eps)+\rmi kJ(\eps)-k^2\oB^{22}(\eps,x_1),
\end{equation}
where
\be\label{L0}
L_0(\eps)=\partial_{x_1}\big[\oB^{11}(\eps,x_1)\partial_{x_1}-\oA^1(\eps,x_1)\big]\ee
and $J(\eps)$ is the first order operator defined by
\be\label{J}
J(\eps)=\big[  \oB^{12}(\eps,x_1)+\oB^{21}(\eps,x_1) \big]\partial_{x_1}+\oB(\eps,x_1)-\oA^2(\eps,x_1).
\ee
Here the functions $\oA^j,\oB^{jk}:( -\delta,\delta)\times\R\to\R^{n\times n}$ are defined by composing the functions $A^j$ and $B^{jk}$, respectively, with $(\eps,\overline{\bu}^\eps(x_1))$.
\begin{remark}\label{Evans-D}
Since the operator $L_k(\eps)$, $k\in\ZZ$, are one-dimensional differential operators (\eqref{Lk-eps-rep}, \eqref{L0}), we note that its eigenvalues can be
obtained, away from the essential spectrum,
as zeros of the classical Evans function, denoted $D(\cdot,k,\eps)$, see, e.g
\cite{Z1,Z2}.
At the special eigenvalues $\lambda=0$ of $L_0(\eps)$, which are embedded in essential spectra, the zeros of the Evans function carry additional information
determining asymptotic stability \cite{Z1}.
\end{remark}

Next, we set up the general $O(2)$ bifurcation spectral criterion.
In addition to assumptions (A1)--(A3), (B1) and (H0)--(H4) imposed in the previous sections,  we impose the following crucial assumption.

\vspace{0.3cm}

\noindent{\bf Hypothesis $(D_\eps)$}.
We assume that the family of operators $L(\eps)$, $\eps\in(-\delta,\delta)$, satisfies the following conditions
\begin{enumerate}

\item[(i)] For each $\eps$ there exists an open set $\Xi(\eps)$ such that
\begin{equation}\label{(D-epsilon-i)}
\{\lambda\in\C:\mathrm{Re}\lambda\geq 0\}\setminus\{0\}\subset\Xi(\eps)\subseteq\rho_{\mathrm{ess}}(L(\eps));
\end{equation}

\item[(ii)] $\lambda=0$ is a zero of algebraic multiplicity $\ell$ (defined in
(H4)) of the Evans function $D(\cdot,0,\eps)$;
\item[(iii)] There exists a pair of eigenvalues $\lambda_\pm(\eps)=\gamma(\eps)+\rmi\omega(\eps)$ of $L(\eps)$ of multiplicity $2$, for which $\gamma(0)=0$ and $\gamma'(0)>0$, associated with operators $L_{\pm k_*}(\eps)$,
$k_*\neq 0$;
\item [(iv)] Besides $\lambda_\pm(\eps)$, the operators $L_k(\eps)$, $k\in\ZZ\setminus\{0\}$, have no other eigenvalues.
\end{enumerate}

The next step in constructing the $O(2)$ bifurcation is to construct the
time-$T$ evolution map of the
equation \eqref{pert-sys}. An crucial role in this construction is played by the rotational invariance and by the $x_2\to-x_2$ invariance of this equation. More precisely, we note that there exists a non-degenerate rotation group of linear operators $\{R(\theta)\}_{\theta\in\R}$ on $L^2(\RR\times [-\pi,\pi],\C^n)$, with $R(\theta)^*=R(-\theta)$, for all $\theta\in\R$\footnote{We can naturally extend the operator $S$ to the complexification of its domain such that $S^*=S$}, and a bounded symmetry $S$ on $L^2(\RR\times [-\pi,\pi],\R^n)$ satisfying
\begin{equation}\label{inv-RS}
\CalF(\eps;R(\theta)\bu)=R(\theta)\CalF(\eps;\bu),\quad \CalF(\eps;S\bu)=S\CalF(\eps;\bu),
\end{equation}
for all $\bu\in H^2(\RR\times [-\pi,\pi],\R^n)$, $\theta\in\R$ and $\eps\in(-\delta,\delta)$. The group $\{R(\theta\}_{\theta\in\R}$ and $S$ satisfy the following condition
\begin{equation}\label{RS}
R(\theta)S=SR(-\theta) \quad\mbox{for all}\quad\theta\in\R.
\end{equation}
Since $L(\eps)=\frac{\partial\CalF}{\partial\bu}(\eps;\overline{\bu}^\eps)$, from \eqref{inv-RS} we obtain that $H^2(\R\times[-\pi,\pi],\C^n)=\dom(L(\eps))$ is invariant under $S$ and $R(\theta)$, for all $\theta\in\R$ and
\begin{equation}\label{inv-LRS}
L(\eps)R(\theta)=R(\theta)L(\eps), \quad L(\eps)S=SL(\eps)\quad\mbox{for all}\quad\theta\in\R,\eps\in(-\delta,\delta).
\end{equation}
We introduce $G$ the generator of the rotation group $\{R(\theta)\}_{\theta\in\R}$ and we note that $\dom(L(\eps))\subset\dom(G)$ for all $\eps\in(-\delta,\delta)$. Moreover, from \eqref{RS} and \eqref{inv-LRS} and since $\{R(\theta)\}_{\theta\in\R}$ is a rotation group, we infer that
\begin{equation}\label{inv-GLS}
G^*=-G,\quad GL(\eps)=L(\eps)G,\quad GS=-SG\quad \mbox{for all}\quad\eps\in(-\delta,\delta).
\end{equation}
Very important in our reduction are the eigenspaces $\Sigma_\pm(\eps)$ associated to the eigenvalues $\lambda_\pm(\eps)$ of $L(\eps)$. In the next lemma we summarize a few basic properties of these eigenspaces.
\begin{lemma}\label{Sig-GS} For any $\eps\in(-\delta,\delta)$ the following assertions hold true:
\begin{itemize}
\item[(i)] The subspaces $\Sigma_\pm(\eps)$ are invariant under $G$, $S$ and $R(\theta)$ for any $\theta\in\R$;
\item[(ii)] There exits $\ale\in\R\setminus\{0\}$ such that $\sigma(G_{|\Sigma_+(\eps)})=\{\pm\rmi\ale\}$;
\item[(iii)] Let $\bwe\in\Sigma_+(\eps)$ be the eigenfunction (unique up to a scalar multiple) of $G_{|\Sigma_+(\eps)}$ corresponding to the eigenvalue $\rmi\ale$. Then, the eigenspaces $\Sigma_\pm(\eps)$ can be represented as follows:
\begin{equation}\label{Spmw}
\Sigma_+(\eps)=\mathrm{Sp}\{\bwe,S\bwe\},\quad \Sigma_-(\eps)=\mathrm{Sp}\{\overline{\bwe},S\overline{\bwe}\}.
\end{equation}
\end{itemize}
\end{lemma}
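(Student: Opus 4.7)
\emph{Approach.} All three parts will follow by elementary finite-dimensional spectral reasoning applied to the two-dimensional eigenspaces $\Sigma_\pm(\eps)$, using only the commutation relations \eqref{inv-LRS}, \eqref{inv-GLS}, the skew-adjointness $G^*=-G$, and the invariance of \eqref{sys} under complex conjugation. The plan for (i) is to use commutation with $L(\eps)$; for (ii), to combine skew-Hermiticity on a $2$-dimensional subspace with the sign symmetry enforced by $GS=-SG$; and for (iii), to build the displayed bases of $\Sigma_\pm(\eps)$ by applying $S$ and complex conjugation to a single $G$-eigenvector in $\Sigma_+(\eps)$.

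For (i), each $\Sigma_\pm(\eps)$ is a finite-dimensional algebraic eigenspace of $L(\eps)$ and hence is preserved by every closed operator that commutes with $L(\eps)$ and is defined on it. For $R(\theta)$ and $S$ this is immediate from \eqref{inv-LRS}; for $G$ one combines $\Sigma_\pm(\eps)\subset\dom(L(\eps))\subset\dom(G)$ with the identity $GL(\eps)=L(\eps)G$ from \eqref{inv-GLS}.

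For (ii), $G|_{\Sigma_+(\eps)}$ is a skew-Hermitian $2\times 2$ matrix, since $G$ is skew-adjoint on $L^2$ and $\Sigma_+(\eps)$ is a $G$-invariant finite-dimensional subspace; hence its spectrum consists of two purely imaginary numbers $\rmi\beta_1,\rmi\beta_2$. Using that $S$ is an isomorphism of $\Sigma_+(\eps)$ by (i), the relation $GS=-SG$ implies that $\bv\mapsto S\bv$ sends the $\rmi\beta_j$-eigenspace of $G|_{\Sigma_+(\eps)}$ to the $-\rmi\beta_j$-eigenspace, so $\sigma(G|_{\Sigma_+(\eps)})=-\sigma(G|_{\Sigma_+(\eps)})$ and consequently $\{\beta_1,\beta_2\}=\{\ale,-\ale\}$ for some $\ale\in\R$. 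To rule out $\ale=0$, I use that a $2\times 2$ skew-Hermitian matrix with spectrum $\{0\}$ vanishes, so $G$ would annihilate $\Sigma_+(\eps)$; every element would then be $x_2$-independent, placing $\lambda_+(\eps)$ in $\sigma(L_0(\eps))$ after Fourier decomposition \eqref{Lk-eps-rep}, which contradicts Hypothesis $(D_\eps)$(iii) (the crossing eigenvalues are carried by $L_{\pm k_*}$ with $k_*\neq 0$).

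For (iii), pick a nonzero $\bwe$ in the $\rmi\ale$-eigenspace of $G|_{\Sigma_+(\eps)}$; then $GS=-SG$ yields $G(S\bwe)=-\rmi\ale\,S\bwe$, so $\bwe$ and $S\bwe$ are linearly independent and span the two-dimensional $\Sigma_+(\eps)$. For $\Sigma_-(\eps)$, since \eqref{sys} is a real system both $L(\eps)$ and $G$ (a real differential operator) commute with complex conjugation on the complexified domain; hence $\overline{\bwe}$ is an eigenvector of $L(\eps)$ with eigenvalue $\overline{\lambda_+(\eps)}=\lambda_-(\eps)$, so $\overline{\bwe}\in\Sigma_-(\eps)$, and $G\overline{\bwe}=-\rmi\ale\,\overline{\bwe}$; applying (i) to $\Sigma_-(\eps)$ places $S\overline{\bwe}$ in $\Sigma_-(\eps)$, and $GS=-SG$ gives $G(S\overline{\bwe})=\rmi\ale\,S\overline{\bwe}$, so once again a distinct-eigenvalue argument plus a dimension count yields $\Sigma_-(\eps)=\mathrm{Sp}\{\overline{\bwe},S\overline{\bwe}\}$. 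I expect the only substantive step to be the nondegeneracy $\ale\neq 0$ in (ii), where the transverse character $k_*\neq 0$ of the bifurcating Fourier modes must be invoked; everything else is routine commutator bookkeeping.
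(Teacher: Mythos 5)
Your proof is correct and follows essentially the same route as the paper: invariance of $\Sigma_\pm(\eps)$ from the commutation relations, skew-Hermiticity of $G_{|\Sigma_+(\eps)}$ to place its spectrum on $\rmi\R$, the anticommutation $GS=-SG$ to produce the second eigenvector and the $\pm\rmi\ale$ pairing, and complex conjugation to handle $\Sigma_-(\eps)$. The only (harmless) difference is in ruling out $\ale=0$: the paper appeals to non-degeneracy of the rotation group (``$\ker G=\{0\}$''), whereas you argue via the Fourier decomposition and the transverse wave number $k_*\neq 0$ in Hypothesis $(D_\eps)$(iii) — an equally valid, and if anything more explicit, justification.
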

\begin{proof} Assertion (i) follows from the fact that the operator $L(\eps)$ commutes with
$R(\theta)$, $S$ and $G$ by \eqref{inv-LRS} and \eqref{inv-GLS}.

(ii) From \eqref{inv-GLS} we have that $G^*=-G$ and since by (i) $\Sigma_+(\eps)$ is invariant under $G$, we infer that $(G_{|\Sigma_+(\eps)})^*=-G_{|\Sigma_+(\eps)}$. It follows that $\sigma(G_{|\Sigma_+(\eps)})\subset\rmi\R$. Since $\dim(\Sigma_+(\eps))=2$, we conclude that there exists $\ale\in\R\setminus\{0\}$ such that $\sigma(G_{|\Sigma_+(\eps)})=\{\pm\rmi\ale\}$.
Taking into account that the group of rotations $\{R(\theta)\}$ is non-degenerate, we infer that $\ker G=\{0\}$, which implies that $\ale\ne 0$, proving (ii).

(iii) We note that since $L(\eps)\bwe=\lambda_+(\eps)\bwe$, from \eqref{inv-LRS} it follows that
$$L(\eps)S\bwe=SL(\eps)\bwe=\lambda_+(\eps)S\bwe,$$
and thus $S\bwe\in\Sigma_+(\eps)$. Next, we will show that $\bwe$ and $S\bwe$ are linearly independent. From \eqref{inv-GLS} we obtain that $G(S\bwe)=-SG\bwe=-\rmi\ale S\bwe$. Thus, $\bwe$ and $S\bwe$ are eigenfunctions of the same operator corresponding to different eigenvalues, which proves that $\bwe$ and $S\bwe$ are linearly independent. Using again that $\dim(\Sigma_+(\eps))=2$ we have that
$\Sigma_+(\eps)=\mathrm{Sp}\{\bwe,S\bwe\}.$
Since $\overline{L(\eps)\bu}=L(\eps)\overline{\bu}$ for any $\bu\in H^2(\R\times [-\pi,\pi],\CC^n)$ we readily infer that
$\Sigma_-(\eps)=\mathrm{Sp}\{\overline{\bwe},S\overline{\bwe}\}.$
\end{proof}
Since $\lambda_\pm(\eps)$ is an eigenvalue of $L(\eps)$ and $\overline{\lambda_+(\eps)}=\lambda_-(\eps)$ for any $\eps\in(-\delta,\delta)$, we know that $\lambda_\pm(\eps)$ are also eigenvalues of $L(\eps)^*$ for any $\eps\in(-\delta,\delta)$. Moreover, if we denote by $\tS_\pm(\eps)$ the eigenspaces associated to eigenvalues $\lambda_\pm(\eps)$ of $L(\eps)^*$, we have that $\dim(\tS_\pm(\eps))=2$ for any $\eps\in(-\delta,\delta)$. The properties satisfied by the eigenspaces $\tS_\pm(\eps)$ are similar to the ones described in Lemma~\ref{Sig-GS} as shown in the lemma below.
\begin{lemma}\label{TildeSig-GS}
For any $\eps\in(-\delta,\delta)$ the following assertions hold true:
\begin{itemize}
\item[(i)] The subspaces $\tS_\pm(\eps)$ are invariant under $G$, $S$ and $R(\theta)$ for any $\theta\in\R$;
\item[(ii)] $\sigma(G_{|\tS_-(\eps)})=\{\pm\rmi\ale\}$. The function $\ale$ is the one introduced in Lemma~\ref{Sig-GS}(ii);
\item[(iii)] Let $\tbwe\in\Sigma_-(\eps)$ be the eigenfunction (unique up to a scalar multiple) of $G_{|\tS_+(\eps)}$ corresponding to the eigenvalue $\rmi\ale$. Without loss of generality we can choose $\tbwe$ such that $\langle\bwe,\tbwe\rangle_{L^2}=1$. Then, the eigenspaces $\tS_\pm(\eps)$ can be represented as follows:
$$
\tS_-(\eps)=\mathrm{Sp}\{\tbwe,S\tbwe\},\quad \tS_+(\eps)=\mathrm{Sp}\{\overline{\tbwe},S\overline{\tbwe}\}.
$$
\end{itemize}
\end{lemma}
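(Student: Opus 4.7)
The argument closely parallels Lemma~\ref{Sig-GS}, with the additional ingredient of biorthogonality between spectral subspaces of $L(\eps)$ and $L(\eps)^*$. For (i), I would take adjoints of the commutation relations \eqref{inv-LRS}--\eqref{inv-GLS}. Since $G^*=-G$, $S^*=S$, and $R(\theta)^*=R(-\theta)$, the identities $L(\eps)G=GL(\eps)$, $L(\eps)S=SL(\eps)$, and $L(\eps)R(\theta)=R(\theta)L(\eps)$ pass to the corresponding identities for $L(\eps)^*$. Hence $\tS_\pm(\eps)$, being eigenspaces of $L(\eps)^*$, are invariant under $G$, $S$, and $R(\theta)$.

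For (ii), the skew-adjointness $G^*=-G$ restricted to the $2$-dimensional $G$-invariant subspace $\tS_-(\eps)$, together with $\ker G=\{0\}$ (nondegeneracy of the rotation group), yields $\sigma(G|_{\tS_-(\eps)})=\{\pm i\tilde\alpha(\eps)\}$ for some $\tilde\alpha(\eps)\neq 0$. The delicate point is to identify $\tilde\alpha(\eps)$ with the $\ale$ of Lemma~\ref{Sig-GS}. I plan to use that the $L^2$-pairing restricted to $\Sigma_+(\eps)\times\tS_-(\eps)$ is nondegenerate, by spectral biorthogonality between the $\lambda_+(\eps)$-eigenspace of $L(\eps)$ and the $\overline{\lambda_+(\eps)}=\lambda_-(\eps)$-eigenspace of $L(\eps)^*$. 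In bases $\{\bwe,S\bwe\}$ and $\{\tilde v,S\tilde v\}$ diagonalizing $G$ with eigenvalues $\pm i\ale$ and $\pm i\tilde\alpha(\eps)$ respectively, the identity $\langle Gf,g\rangle=-\langle f,Gg\rangle$ combined with sesquilinearity of the inner product forces each entry $\langle f,g\rangle$ of the pairing matrix to vanish unless the $G$-eigenvalues of $f$ and $g$ coincide. Nondegeneracy then forces $\tilde\alpha(\eps)\in\{\pm\ale\}$; after a possible relabeling of the two eigenvalues of $G|_{\tS_-(\eps)}$, we may take $\tilde\alpha(\eps)=\ale$.

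For (iii), I would let $\tbwe\in\tS_-(\eps)$ be the (unique up to scalar) $i\ale$-eigenvector of $G$. The matrix computation from (ii) shows $\langle\bwe,\tbwe\rangle_{L^2}\neq 0$, so the normalization $\langle\bwe,\tbwe\rangle_{L^2}=1$ is achievable. As in Lemma~\ref{Sig-GS}(iii), $GS=-SG$ gives $G(S\tbwe)=-i\ale S\tbwe$, so $\tbwe$ and $S\tbwe$ are linearly independent, and hence $\tS_-(\eps)=\mathrm{Sp}\{\tbwe,S\tbwe\}$. Finally, since $L(\eps)$ (hence $L(\eps)^*$) has real coefficients, complex conjugation maps the $\lambda_-(\eps)$-eigenspace of $L(\eps)^*$ to the $\overline{\lambda_-(\eps)}=\lambda_+(\eps)$-eigenspace, yielding $\tS_+(\eps)=\overline{\tS_-(\eps)}=\mathrm{Sp}\{\overline{\tbwe},S\overline{\tbwe}\}$.

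The principal obstacle is the identification $\tilde\alpha(\eps)=\ale$ in (ii): one must carefully track the interplay between the skew-adjointness of $G$ and the sesquilinearity of the $L^2$ inner product across the biorthogonal pairing between $\Sigma_+(\eps)$ and $\tS_-(\eps)$, in order to exclude the \emph{a priori} possibility that $G|_{\tS_-(\eps)}$ has eigenvalues unrelated to those of $G|_{\Sigma_+(\eps)}$. Once this matching is achieved, parts (i) and (iii) follow routinely from the arguments already deployed in Lemma~\ref{Sig-GS}.
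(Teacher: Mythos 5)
Your proposal is correct and follows essentially the same route as the paper: take adjoints of \eqref{inv-LRS}--\eqref{inv-GLS} to get invariance, use skew-adjointness of $G$ on the two-dimensional invariant subspace to get a pair $\{\pm\rmi\beta(\eps)\}$, match it with $\{\pm\rmi\ale\}$ via the pairing $\langle\bwe,\tbwe\rangle_{L^2}$, and then repeat the Lemma~\ref{Sig-GS} arguments ($GS=-SG$ and realness of coefficients) for the span representations. The only difference is that you justify nondegeneracy of the $\Sigma_+(\eps)$--$\tS_-(\eps)$ pairing via spectral biorthogonality before matching eigenvalues, which cleanly supplies the fact $\langle\bwe,\tbwe\rangle_{L^2}\neq 0$ that the paper's proof uses tacitly through the normalization $\langle\bwe,\tbwe\rangle_{L^2}=1$.
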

\begin{proof} First, we note that by taking adjoint in \eqref{inv-LRS} and \eqref{inv-GLS} we obtain that the operator $L(\eps)^*$ commutes with $R(\theta)$, $S$ and $G$ for any $\theta\in\R$ and $\eps\in(-\delta,\delta)$. Since, in addition $\dim(\tS_\pm(\eps))=2$ for any $\eps\in(-\delta,\delta)$, we can obtain all properties above by using the same arguments we used in Lemma~\ref{Sig-GS}. The only thing left to prove is that the operators $G_{|\Sigma_+(\eps)}$ and $G_{|\tS_+(\eps)}$ have the same eigenvalues. Since $G^*=-G$ and the eigenspace $\tS_+(\eps)$ is invariant under $G$ we have that $(G_{|\tS_+(\eps)})^*=-G_{|\tS_+(\eps)}$, and thus $\sigma(G_{|\tS_+(\eps)})=\{\pm\rmi\beta(\eps)\}$ for some $\beta(\eps)\in\R$. To finish the proof all we need to do is to show that $|\ale|=|\beta(\eps)|$. Indeed, one can readily check that
\[
\rmi\ale\langle\bwe,\tbwe\rangle_{L^2}=\langle G\bwe,\tbwe\rangle_{L^2}
=\langle \bwe,G^*\tbwe\rangle_{L^2}=\langle \bwe,-G\tbwe\rangle_{L^2}=-\rmi\beta(\eps)\langle\bwe,\tbwe\rangle_{L^2},
\]
finishing the proof.
\end{proof}
\begin{remark}\label{new-3.5}
For any $\eps\in(-\delta,\delta)$ and $\theta\in\R$ the following assertions hold true:
\begin{equation}\label{SW}
\langle S\bwe,\tbwe\rangle_{L^2}=0.
\end{equation}
\begin{equation}\label{R-theta-w}
R(\theta)\bwe=e^{\rmi\ale\theta}\bwe;R(\theta)S\bwe=e^{-\rmi\ale\theta}S\bwe;R(\theta)\tbwe=e^{\rmi\ale\theta}\tbwe;
R(\theta)S\tbwe=e^{-\rmi\ale\theta}S\tbwe.
\end{equation}
\end{remark}
\begin{proof}
To prove \eqref{SW} we use \eqref{inv-GLS}, Lemma~\ref{Sig-GS} and Lemma~\ref{TildeSig-GS} as follows: first  we compute
\[
\langle SG\bwe,\tbwe\rangle_{L^2}=\langle S(\rmi\ale\bwe),\tbwe\rangle_{L^2}=\rmi\ale\langle S\bwe,\tbwe\rangle_{L^2}.
\]
In addition,
\begin{align*}
\langle SG\bwe,\tbwe\rangle_{L^2}&=-\langle GS\bwe,\tbwe\rangle_{L^2}=-\langle S\bwe,G^*\tbwe\rangle_{L^2}=-\langle S\bwe,-G\tbwe\rangle_{L^2}\\
&=\langle S\bwe,G\tbwe\rangle_{L^2}=\langle S\bwe,\rmi\ale\tbwe\rangle_{L^2}=-\rmi\ale\langle S\bwe,\tbwe\rangle_{L^2}.
\end{align*}
Using the group property of $\{R(\theta)\}_{\theta\in\R}$, one readily infers \eqref{R-theta-w} from Lemma~\ref{Sig-GS} and Lemma~\ref{TildeSig-GS}.
\end{proof}
Throughout this paper we denote by $\Pi_\pm(\eps)$ the orthogonal projection onto $\Sigma_\pm(\eps)$ parallel to $\tS_\pm(\eps)^\perp$ and by $\Pi(\eps)=I-\Pi_+(\eps)-\Pi_-(\eps)$. Also,
we introduce $\Sigma(\eps):=\Range(\Pi(\eps))$ the complement of $\Sigma_+(\eps)\oplus\Sigma_-(\eps)$. From \eqref{Spmw} and \eqref{SW} we know that the projectors $\Pi_\pm(\eps)$
have the following representation:
\begin{equation}\label{Pipm-rep}
\Pi_+(\eps)\bu=\langle\bu,\tbwe\rangle_{L^2}\bwe+\langle\bu,S\tbwe\rangle_{L^2} S\bwe,\quad
\Pi_-(\eps)\bu=\langle\bu,\overline{\tbwe}\rangle_{L^2}\overline{\bwe}+\langle\bu,S\overline{\tbwe}
\rangle_{L^2} S\overline{\bwe}
\end{equation}
for any $\bu\in L^2(\RR\times [-\pi,\pi],\C^n)$. Using the fact that $\bwe$ and $S\bwe$ are eigenfunctions of $L(\eps)$ associated to the eigenvalue $\lambda_+(\eps)$ and $\tbwe$ and $S\tbwe$ are eigenvalues of $L(\eps)^*$ associated to the eigenvalue $\lambda_-(\eps)=\overline{\lambda_+(\eps)}$ for any $\eps\in(-\delta,\delta)$, one can readily check that
\begin{equation}\label{inv-PiL}
\Pi_\pm(\eps)L(\eps)=L(\eps)\Pi_\pm(\eps),\quad \Pi(\eps)L(\eps)=L(\eps)\Pi(\eps)\quad\mbox{for any}\quad\eps\in(-\delta,\delta).
\end{equation}
Next, we note that equation \eqref{pert-sys} is of the form
\begin{equation}\label{new-pert-sys}
\bv'(t)=L(\eps)\bv(t) +\CalN(\eps;\bv(t)), \quad t\geq 0,
\end{equation}
where the non-linear function $\CalN:(-\delta,\delta)\times H^2(\RR\times [-\pi,\pi],\C^n)\to L^2(\RR\times [-\pi,\pi],\C^n)$ is defined by
\begin{equation}\label{def-nonlinear}
\CalN(\eps;\bu)=\CalF(\eps;\bu)-L(\eps)\bu.
\end{equation}
From \eqref{inv-RS} and \eqref{inv-LRS} we conclude that
\begin{equation}\label{inv-N}
\CalN(\eps;R(\theta)\bu)=R(\theta)\CalN(\eps;\bu),\quad \CalN(\eps;S\bu)=S\CalN(\eps;\bu),
\end{equation}
for all $\bu\in H^2(\RR\times [-\pi,\pi],\R^n)$, $\theta\in\R$ and $\eps\in(-\delta,\delta)$.
To construct the return map of \eqref{pert-sys}, we start by coordinatizing equation \eqref{new-pert-sys} as follows:
\begin{equation}\label{coor-1}
\bv_\pm(t)=\Pi_\pm(\eps)\bv(t), \quad \tbv(t)=\Pi(\eps)\bv(t), \quad t\geq 0,\eps\in(-\delta,\delta).
\end{equation}
We introduce the functions $\phi,\psi:\R_+\to\C$ by $\phi(t)=\langle \bv(t),\tbwe\rangle_{L^2}$ and $\psi(t)=\langle \bv(t),S\tbwe\rangle_{L^2}$. Since we are looking for real-valued solutions of our PDE-system we are interested in finding solution $\bv$ of equation \eqref{new-pert-sys} satisfying $\bv(t)=\overline{\bv(t)}$ for each $t\geq 0$. It follows that
\begin{equation}\label{real-v}
\overline{\phi(t)}=\langle \bv(t),\overline{\tbwe}\rangle_{L^2},\quad \overline{\psi(t)}=\langle \bv(t),S\overline{\tbwe}\rangle_{L^2}\quad\mbox{for all}\quad t\geq 0,
\end{equation}
which implies that
\begin{equation}\label{v-pm}
\bv_-(t)=\overline{\bv_+(t)}\quad\mbox{for all}\quad t\geq 0.
\end{equation}
Next, we rewrite the system \eqref{new-pert-sys} in the new variables $(\phi,\psi,\tbv
)$. From Lemma~\ref{TildeSig-GS}(iii) and Remark~\ref{new-3.5} we conclude that \eqref{new-pert-sys} is equivalent to the system
\begin{equation}\label{int-pert-sys}
\left\{\begin{array}{ll}
\phi'=\lambda_+(\eps)\phi+\langle \CalN(\eps;\phi\bwe+\psi S\bwe+\tbv),\tbwe\rangle_{L^2}\\
\psi'=\lambda_+(\eps)\psi+\langle \CalN(\eps;\phi\bwe+\psi S\bwe+\tbv),S\tbwe\rangle_{L^2}\\
\tbv'=\tL(\eps)\tbv+\Pi(\eps)\CalN(\eps;\phi\bwe+\psi S\bwe+\tbv)\end{array}\right.,
\end{equation}
where $\tL(\eps)=L(\eps)_{|\Range(\Pi(\eps))}$.

Since the linear operator $L(\eps)$ generates a $C_0$-semigroup, (see, e.g., \cite{Lunardi} or \cite{Z2}),  from \eqref{inv-PiL} we infer that $\tL(\eps)$ generates a $C_0$-semigroup. Next, we integrate \eqref{int-pert-sys} in $t\in [0,T]$ using the variation of constants formula to obtain the system
\begin{equation}\label{pert-sys4}
\left\{\begin{array}{ll}
\phi(T)=e^{T\lambda_+(\eps)}\phi(0)+\int_0^Te^{(T-t)\lambda_+(\eps)}\Phi(\phi(s),\psi(s),\tbv(s),\eps)\,ds\\
\psi(T)=e^{T\lambda_+(\eps)}\psi(0)+\int_0^Te^{(T-t)\lambda_+(\eps)}\Psi(\phi(s),\psi(s),\tbv(s),\eps)\,ds\\
\tbv(T)=e^{T\tL(\eps)}\tbv(0)+\int_0^Te^{(T-t)\tL(\eps)}\tilde{\mathcal{V}}(\phi(s),\psi(s),\tbv(s),\eps)\,ds
\end{array}\right..
\end{equation}
Here we denoted by $\{e^{t\tL(\eps)}\}_{t\geq 0}$ the $C_0$-semigroup generated by the operator $\tL(\eps)$. The nonlinearities
$\Phi,\Psi:\C^2\times H^2(\R\times [-\pi,\pi],\C^n)\times (-\delta,\delta)\to\CC$, are defined by
\begin{equation*}
\Phi(z_1,z_2,\tbv,\eps)=\langle \Cal N(\eps;z_1\bwe+z_2 S\bwe+\tbv), \tbwe\rangle_{L^2},\;
\Psi(z_1,z_2,\tbv,\eps)=\langle \Cal N(\eps;z_1\bwe+z_2 S\bwe+\tbv), S\tbwe\rangle_{L^2}.
\end{equation*}
In addition, the nonlinear map $\tilde{\mathcal{V}}:\C^2\times H^2(\R\times [-\pi,\pi],\C^n)\times (-\delta,\delta)\to L^2(\R\times [-\pi,\pi],\C^n)$ is defined by
$$\tilde{\mathcal{V}}(z_1,z_2,\tbv,\eps)=
\Pi(\eps)\Cal N(\eps;z_1\bwe+z_2 S\bwe+\tbv).$$
To prove existence of periodic solutions of period $T$ of \eqref{pert-sys} it is enough to show that we can solve for $T$ in the $T$-return map of \eqref{pert-sys4} in terms of the initial conditions.
This is equivalent with finding fixed points of the period map defined by \eqref{pert-sys4} or with finding zeros of the displacement map $\mathrm{\bf Disp}=(\mathrm{\bf Disp}_1, \mathrm{\bf Disp}_2, \mathrm{\bf Disp}_3):\C^2\times H^2(\R\times [-\pi,\pi],\C^n)\times(-\delta,\delta)\times (0,\infty)\to \C^2\times L^2(\R\times [-\pi,\pi],\C^n)$ defined by
\be\label{def-disp1}
\mathrm{\bf Disp}_1(a_1,a_2,\tbv_0,\eps,T)=(e^{T\lambda_+(\eps)}-1)a_1+
\int_0^Te^{(T-t)\lambda_+(\eps)}\Phi(\phi(s),\psi(s),\tbv(s),\eps)\,ds;
\ee
\be\label{def-disp2}
\mathrm{\bf Disp}_2(a_1,a_2,\tbv_0,\eps,T)=(e^{T\lambda_+(\eps)}-1)a_2+
\int_0^Te^{(T-t)\lambda_+(\eps)}\Psi(\phi(s),\psi(s),\tbv(s),\eps)\,ds;
\ee
\be\label{def-disp4}
\mathrm{\bf Disp}_3(a_1,a_2,\tbv_0,\eps,T)=(e^{T\tL(\eps)}-I)\tbv_0+
\int_0^Te^{(T-t)\tL(\eps)}\tilde{\mathcal{V}}(\phi(s),\psi(s),\tbv(s),\eps)\,ds
\ee
where $(\phi,\psi,\tbv)$ is a solution of \eqref{int-pert-sys} with initial condition $(\phi,\psi,\tbv)(0)=(a_1,a_2, \tbv_0)$.
At this moment it is crucial to eliminate $\tbv_0$ from the system
\begin{equation}\label{pert-sys5}
\mathrm{\bf Disp}(a_1,a_2,\tbv_0,\eps,T)=0
\end{equation}
using a special form of the Lyapunov--Schmidt reduction in order to obtain a finite dimensional system. To apply the Lyapunov-Schmidt reduction method, following \cite{TZ1}--\cite{TZ4}, we need to investigate some of the properties of $\tL(\eps)=L(\eps)_{|\Range(\Pi(\eps))}$.
More precisely, we need to investigate the (right) invertibility of $e^{T\tL(\eps)}-I$ for $T>0$ to be chosen later.  From Hypothesis $(D_\eps)$ we infer that $\sigma(\tL(\eps))\cap\rmi\R=\emptyset$ and  $\tL(\eps)$ has only finitely many eigenvalues with positive real part of finite multiplicity. We introduce the spectral projectors
\be\label{proj-pos-neg}
\Pi^{\mathrm{pos}}(\eps)=\;\mbox{spectral projection of}\;\sigma(\tL(\eps))\cap\{\lambda\in\C:\mathrm{Re}\lambda>0\},\;\;\Pi^{\mathrm{neg}}(\eps)=\Pi(\eps)-\Pi^{\mathrm{pos}}(\eps).
\ee
Moreover, if we define $L^{\mathrm{pos}}(\eps)=L(\eps)_{|\Range(\Pi^{\mathrm{pos}}(\eps))}$ and
$L^{\mathrm{neg}}(\eps)=L(\eps)_{|\Range(\Pi^{\mathrm{neg}}(\eps))}$, from the invariance of the spectral projectors we obtain the following diagonal decomposition on $\Range(\Pi(\eps))=\Range(\Pi^{\mathrm{pos}}(\eps))\oplus\Range(\Pi^{\mathrm{neg}}(\eps))$:
\be\label{L-Pi-pos-neg}
L^\Pi(\eps)=\left(\begin{array}{cc} \pL(\eps) & 0 \\ 0 & \nL(\eps)
\end{array}\right).
\ee
Since $\Range(\Pi^{\mathrm{pos}}(\eps))$ is finite dimensional and $\sigma(\pL(\eps))\subset\{\lambda\in\C:\mathrm{Re}\lambda>0\}$, we infer
\be\label{inver-pos}
e^{T\pL(\eps)}-I\;\mbox{is invertible on}\;  \Range(\Pi^{\mathrm{pos}}(\eps))  \;\mbox{for all}\; T>0, \eps\in (-\delta,\delta).
\ee
Taking again Fourier Transform in $x_2\in [-\pi,\pi]$, we can identify $\nL(\eps)$ with $(\nLk(\eps))_{k\in\Z}$ on $\bigoplus_{k\in\Z}\hat{H}_k^2(\R,\C^n)$, where $\nLk(\eps)=\widehat{\nL(\eps)}(k)$.
In addition, using  that $\Range(\Pi^{\mathrm{pos}}(\eps))$ and $\Sigma_\pm(\eps)$ are finite dimensional spaces we have that there exists $\nZ(\eps)$ a finite subset of $\Z$ such that for any $\eps\in(-\delta,\delta)$, the following assertions hold true:
\be\label{dom-nL1}
\dom(\nLk(\eps))=\hat{H}_k^2(\R,\C^n)\;\mbox{for all}\; k\in\Z\setminus\nZ(\eps);
\ee
\be\label{dom-nL2}
\dom(\nLk(\eps))\;\mbox{is a finite codimension subspace}\; \hat{H}_k^2(\R,\C^n)\;\mbox{for all}\; k\in\nZ(\eps).
\ee
From the definition of spectral projections and Hypothesis $(D_\eps)$ we have that $\sigma(\nL(\eps))\subset\{\lambda\in\C:\mathrm{Re}\lambda<0\}\cup\{0\}$, which implies that
\be\label{Gerh-Pr-1}
\{\lambda\in\C:\mathrm{Re}\lambda>0\}\subset\rho(\nLk(\eps))\;\mbox{for all}\; k\in\Z,\eps\in(-\delta,\delta).
\ee
In the next step we are going to prove that the semigroups generated by $\nLk(\eps)$, $k\in\Z\setminus\{0\}$, are uniformly exponentially stable.
\begin{lemma}\label{Lk-est}
Assume Hypotheses (A1)--(A3), (B1), (H0)--(H4) and $(D_\eps)$. Then, the following assertions hold true:
\begin{enumerate}
\item[(i)] There exists $C>0$ sufficiently large such that for any $k\in\Z\setminus\{0\}, \eps\in(-\delta,\delta)$ the following estimate holds
\be\label{res-k-est}
\|\big(\lambda-\nLk(\eps)\big)^{-1}\|_{\hat{H}_k^1(\R,\C^n)\to\hat{H}_k^1(\R,\C^n)}\leq C\;\mbox{for any}\; \lambda\in\C\;\mbox{with}\;\mathrm{Re}\lambda>0;
\ee
\item[(ii)] There exists a constant $\nu>0$, small enough such that for any $k\in\Z\setminus\{0\}, \eps\in(-\delta,\delta)$ the following estimate holds
\be\label{sem-nLk-est}
\|e^{t\nLk(\eps)}\|_{\hat{H}_k^1(\R,\C^n)\to\hat{H}_k^1(\R,\C^n)}\leq Ce^{-\zeta t}\;\mbox{for any}\; t\geq 0.
\ee
\end{enumerate}
\end{lemma}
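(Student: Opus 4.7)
The plan is to establish (i) first by splitting the spectral parameters $(k,\lambda)$ into a high-frequency regime handled by energy estimates and a bounded regime handled by compactness, and then to deduce (ii) from (i) via the Gearhart--Pr\"uss theorem on Hilbert-space $C_0$-semigroups: a uniform bound on $(\lambda - \nLk(\eps))^{-1}$ on a half-plane $\{\Re\lambda \geq -\zeta_0\}$ immediately yields an exponential decay estimate $\|e^{t\nLk(\eps)}\|_{\hat H^1_k \to \hat H^1_k} \le C e^{-\zeta t}$ for any $\zeta < \zeta_0$.

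The high-frequency regime $k^2 + |\lambda| \geq R$ is treated by energy estimates in the augmented variables $w$ provided by (A3), in the spirit of Section~\ref{s:energy}. Starting from the resolvent equation $(\lambda - L_k(\eps))u = f$ written in those coordinates, I would apply the symmetrizer $\BbbA^0$, integrate over $x_1 \in \R$, and use symmetry of $\BbbA^1_{11}$ from (A3)(i) together with uniform positivity of $\BB^{22}$ from (A3)(iv) to obtain
\[
(\Re\lambda + 1)\|u\|_{L^2}^2 + k^2\|u_2\|_{L^2}^2 + \|\partial_1 u_2\|_{L^2}^2 \leq C \|f\|_{L^2}^2,
\]
with variable-coefficient lower-order errors absorbed by the damping once $k^2$ is large enough. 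Decay in the hyperbolic component, not provided by $\BB^{22}$, is then recovered by Kawashima's compensator construction exploiting the genuine coupling (H3), as in \cite{Z1,Z2}. Applying $\partial_1$ to the equation and running an analogous first-order energy estimate upgrades the bound to the $\hat H^1_k$ norm, yielding a uniform $\hat H^1_k \to \hat H^1_k$ resolvent estimate in this regime.

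For the complementary regime $k^2 + |\lambda| \leq R$ with $k \neq 0$, the parameter set $\{k \in \Z : 0 < |k| \leq \sqrt{R}\} \times (-\delta,\delta) \times \{\lambda : |\lambda| \leq R,\ \Re\lambda \geq 0\}$ is compact. The essential spectrum of $L_k(\eps)$ on this set lies strictly in $\{\Re\lambda < 0\}$, since the constant-coefficient symbols $\sum_j \BbbA_\pm^j \xi_j + \sum_{jk} \BB_\pm^{jk} \xi_j \xi_k$ at transverse wavenumber $k \neq 0$ are spectrally bounded away from $\rmi\R$ by (A3), (H3), and Kawashima's theorem; and the point spectrum of $\nLk(\eps)$ in $\{\Re\lambda \geq 0\}$ is empty, because by Hypothesis $(D_\eps)$(iv) the only such eigenvalues of $L_k(\eps)$ are $\lambda_\pm(\eps)$ at $k = \pm k_*$, which are exactly removed by the construction of $\Pi^{\mathrm{neg}}(\eps)$. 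Joint continuity of $(\lambda - \nLk(\eps))^{-1}$ in $(k,\eps,\lambda)$ on the compact set, via analytic perturbation theory and smooth $\eps$-dependence of the projectors guaranteed by $(D_\eps)$(iii), upgrades the pointwise bounds to a uniform bound. Combining with the high-frequency estimate proves (i), and (ii) follows from Gearhart--Pr\"uss.

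The main obstacle is the high-frequency energy estimate: since the transverse damping $-k^2 \BB^{22}$ acts only on the parabolic block, direct integration yields no damping on the hyperbolic component, and the coupling terms from $\BbbA^1$, $\BbbA^2$ and the off-diagonal parts of $\BB^{jk}$ must be controlled without losing that damping. The Kawashima compensator construction of \cite{Z1,Z2}, adapted from the endstates to the variable-coefficient setting by smallness of the profile variation, is the standard remedy and transfers damping to the hyperbolic block via (H3). A secondary technicality is checking that $\Pi^{\mathrm{neg}}(\eps)$ varies smoothly in $\eps$, so that the compact-set continuity argument actually delivers uniformity in $\eps$; this follows from $(D_\eps)$(iii) and standard analytic perturbation theory of isolated eigenvalues of finite multiplicity.
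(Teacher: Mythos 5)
Your proposal is correct and takes essentially the same route as the paper: a uniform high-frequency resolvent bound (which the paper simply quotes from \cite[Prop.~4.7]{Z2} rather than re-deriving via the symmetrizer/Kawashima-compensator energy estimates you sketch), combined with the observation that for $k\neq 0$ the operators $\nLk(\eps)$ have no point spectrum in $\{\mathrm{Re}\,\lambda\geq 0\}$ by $(D_\eps)$ and essential spectrum uniformly bounded to the left of the imaginary axis by (A1)--(A3) and (H3), and then Gearhart--Pr\"uss to pass from (i) to (ii).
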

\begin{proof} First, we note that we can apply the results from
\cite[Prop. 4.7]{Z2}
to conclude that
there are two positive constants $R$ and $C$ sufficiently large and $\theta_{00}>0$ sufficiently small such that for any $\eps\in (-\delta,\delta)$
\be\label{part-est1}
\|\big(\lambda-L_k(\eps)\big)^{-1}\|_{\hat{H}_k^1(\R,\C^n)\to\hat{H}_k^1(\R,\C^n)}\leq C
\;\mbox{whenever}\;|(k,\lambda)|\geq R,\; \mathrm{Re}\lambda>-\theta_{00}.\ee
It follows that
\be\label{part-est2}
\|\big(\lambda-\nLk(\eps)\big)^{-1}\|_{\hat{H}_k^1(\R,\C^n)\to\hat{H}_k^1(\R,\C^n)}\leq C
\;\mbox{whenever}\;|\lambda|\geq R,\; \mathrm{Re}\lambda\geq 0
\ee
for any $\eps\in (-\delta,\delta)$ and any $k\in\Z\setminus\{0\}$.
From Hypothesis $(D_\eps)$ we conclude that $\sigma(\nL(\eps))=\sigma_{\mathrm{ess}}(L(\eps))$, which implies that the operator $\nL(\eps)$ has only essential spectrum, that is $\sigma(\nLk(\eps))=\sigma_{\mathrm{ess}}(\nLk(\eps))$ for all $k\in\Z\setminus\{0\}$. Moreover, from (A1)--(A3) and (H3) we infer that
\[
\sup\mathrm{Re}\sigma_{\mathrm{ess}}(\nLk(\eps))\leq\sup_{\xi\in\R}\Big[ -\frac{\theta_0(\xi^2+k^2)}{1+\xi^2+k^2}\Big]\leq -\frac{\theta_0}{2}<0
\]
for any $\eps\in(-\delta,\delta)$ and any $k\in\Z\setminus\{0\}$. We conclude that
$\{\lambda\in\C:\mathrm{Re}\lambda\geq 0,\;|\lambda|\leq R\}$  is contained in $\rho(\nLk(\eps))$
for any $\eps\in(-\delta,\delta)$ and any $k\in\Z\setminus\{0\}$, which implies that
\be\label{part-est3}
\|\big(\lambda-\nLk(\eps)\big)^{-1}\|_{\hat{H}_k^1(\R,\C^n)\to\hat{H}_k^1(\R,\C^n)}\leq C
\;\mbox{whenever}\;|\lambda|\leq R,\; \mathrm{Re}\lambda\geq 0
\ee
for any $\eps\in (-\delta,\delta)$ and any $k\in\Z\setminus\{0\}$. Assertion (i) follows shortly from
\eqref{part-est2} and \eqref{part-est3}. Assertion (ii) follows from the Gearhart-$\mathrm{Pr\ddot{u}ss}$ Spectral Mapping theorem for $C_0$-semigroups on Hilbert spaces and the estimate \eqref{res-k-est}.
\end{proof}
The (right) invertibility problem for $e^{T\nLze(\eps)}-I$ was settled in
\cite[Prop.4]{TZ3} (see also \cite[Lemma 5.10]{TZ2}).  To formulate this result we need to introduce the function spaces $X_1$, $B_1$, $X_2$ and $B_2$ as follows:
\be\label{def-X1B1}
X_1=H_\eta^2(\R,\C^n)=H^2(\R,\C^n;e^{\eta(1+|x_1|^2)^{1/2}}dx_1), \quad B_1=H^1(\R,\C^n),
\ee
with their natural Hilbert space scalar product. Furthermore, we define
\be\label{def-X2B2}
X_2=\partial_{x_1}H_{2\eta}^1(\R,\C^n)\cap X_1,\quad B_2=\partial_{x_1}L^1(\R,\C^n)\cap B_1.
\ee
and note that $X_2$ is a Hilbert space while $B_2$ is a Banach space. The scalar product on $X_2$ and
the norm on $B_2$ are defined by
\be\label{def-normsX2B2}
\langle \partial_{x_1}f,\partial_{x_1}g\rangle_{X_2}=\langle \partial_{x_1}f,\partial_{x_1}g\rangle_{X_1}+
\langle f,g\rangle_{H_{2\eta}^1},\quad \|\partial_{x_1}f\|_{B_2}=\|f\|_{L^1}+\|\partial_{x_1}f\|_{B_1}.
\ee
\begin{remark}\label{inv-L0}
Under Hypotheses (A1)--(A3), (B1), (H0)--(H4) and $(D_\eps)$, we can choose a $\delta>0$ small enough such that the operator $e^{T\nLze(\eps)}-I$ has a right inverse, bounded from $X_2$ to $X_1$ and from $B_2$ to $B_1$, uniformly in $(\eps,T)$ for $T\in [T_0,T_1]$, $0<T_0<T_1<\infty$, and $\eps\in(-\delta,\delta)$.  Moreover, the function
\be\label{right-inv-C1}
(\eps,T)\to(e^{T\nLze(\eps)}-I)_{|X_2}^\dagger:(-\delta,\delta)\times [T_0,T_1]\to L(X_2,X_1)\cap L(B_2,B_1)
\ee
is $\mathcal{C}^1$ in the $L(B_2,B_1)$ norm\footnote{Throughout this paper we use $A^\dagger$ to denote the right inverse of linear operator $A$}.
\end{remark}
In the following lemma we collect the results from the previous lemmas on the invertibility of
$e^{T\nL(\eps)}-I$. To formulate the result we define the spaces
\be\label{def-cal-X1B1}
\CalX_1=\bigoplus_{k\in\Z\setminus\{0\}}\hat{H}_k^1(\R,\C^n)\oplus X_1,\quad
\CalB_1=\bigoplus_{k\in\Z\setminus\{0\}}\hat{H}_k^1(\R,\C^n)\oplus B_1,
\ee
\be\label{def-cal-X2B2}
\CalX_2=\bigoplus_{k\in\Z\setminus\{0\}}\hat{H}_k^1(\R,\C^n)\oplus X_2,\quad
\CalB_2=\bigoplus_{k\in\Z\setminus\{0\}}\hat{H}_k^2(\R,\C^n)\oplus B_2.
\ee
We recall the definition of $\hat{H}_k^m(\R,\C^n)$, $k\in\Z$, $m=1,2$, given in \eqref{Hk1-product} and \eqref{Hk2-product}
and the definition of $X_1$, $B_1$, $X_2$ and $B_2$ in \eqref{def-X1B1} and\eqref{def-X2B2}.
\begin{lemma}\label{inv-lem}
Under Hypotheses (A1)--(A3), (B1), (H0)--(H4) and $(D_\eps)$, we can choose $\delta>0$ small enough and $T_0>0$ large enough, such that the operator $e^{T\nL(\eps)}-I$ has a right inverse, bounded  from $\CalX_2$ to $\CalX_1$ and from $\CalB_2$ to $\CalB_1$, uniformly in $(\eps,T)$ for $T\in [T_0,T_1]$, $T_1<\infty$, and $\eps\in(-\delta,\delta)$. Moreover, the function
\be\label{right-inv-cal-C1}
(\eps,T)\to(e^{T\tL(\eps)}-I)_{|\CalX_2}^\dagger:(-\delta,\delta)\times [T_0,T_1]\to L(\CalX_2,\CalX_1)\cap L(\CalB_2,\CalB_1)
\ee
is $\mathcal{C}^1$ in the $L(\CalB_2,\CalB_1)$ norm.
\end{lemma}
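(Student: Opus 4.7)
The plan is to exploit the Fourier decomposition in $x_2$ to decouple $\nL(\eps)$ mode-by-mode, handle $k=0$ by appeal to Remark~\ref{inv-L0}, handle $k\ne 0$ by appeal to the uniform exponential stability of Lemma~\ref{Lk-est}, and then reassemble by direct sum. Since by construction
\[
e^{T\nL(\eps)}-I \;=\; \bigoplus_{k\in\Z}\big(e^{T\nLk(\eps)}-I\big)
\]
acts diagonally on both $\CalX_j=\bigoplus_k\hat H_k^1\oplus(\text{$k=0$ piece})$ and $\CalB_j$, it suffices to produce right inverses on each fiber with bounds and $\mathcal{C}^1$-dependence uniform in $k$.

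First, for $k\in\Z\setminus\{0\}$, Lemma~\ref{Lk-est}(ii) gives $\|e^{t\nLk(\eps)}\|_{\hat H_k^1\to\hat H_k^1}\le Ce^{-\zeta t}$ uniformly in $k$ and $\eps$. Thus I would choose $T_0>0$ with $Ce^{-\zeta T_0}\le 1/2$; then for every $T\in[T_0,T_1]$ and $\eps\in(-\delta,\delta)$ the Neumann series
\[
(e^{T\nLk(\eps)}-I)^{-1} \;=\; -\sum_{n\ge 0} e^{nT\nLk(\eps)}
\]
converges in $L(\hat H_k^1,\hat H_k^1)$ with bound uniform in $k\ne 0$, yielding genuine (not merely right) invertibility on each such fiber. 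For $k=0$, Remark~\ref{inv-L0} directly supplies the right inverse from $X_2$ to $X_1$ and from $B_2$ to $B_1$, with $\mathcal{C}^1$ dependence in the $L(B_2,B_1)$ norm. The orthogonal direct sum of these fiberwise inverses is the sought $(e^{T\nL(\eps)}-I)^\dagger$, and the uniform-in-$k$ bounds give boundedness $\CalX_2\to\CalX_1$ and $\CalB_2\to\CalB_1$.

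The main technical point that remains is the $\mathcal{C}^1$ dependence of the assembled operator in the $L(\CalB_2,\CalB_1)$ norm. On the $k=0$ fiber this is exactly the content of Remark~\ref{inv-L0}. On the $k\ne 0$ fibers, differentiating the Neumann series term by term in $T$ is immediate (the factor of $n$ is absorbed by $e^{-\zeta nT}$), while differentiation in $\eps$ requires care since $\partial_\eps\nLk(\eps)$ is itself a differential operator and could a priori lose derivatives. I would handle this by passing to the Dunford representation
\[
e^{T\nLk(\eps)} \;=\; \frac{1}{2\pi\rmi}\int_{\Gamma} e^{T\lambda}\,(\lambda-\nLk(\eps))^{-1}\,d\lambda
\]
on a contour $\Gamma\subset\{\Re\lambda<-\zeta/2\}$; then $\partial_\eps$ brings down $(\lambda-\nLk(\eps))^{-1}\,\partial_\eps\nLk(\eps)\,(\lambda-\nLk(\eps))^{-1}$, and the smoothing afforded by composing with $e^{(T/2)\nLk(\eps)}$ compensates for the derivative loss, the uniform-in-$k$ resolvent bound \eqref{res-k-est} yielding uniformity. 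The hardest step is precisely this verification of uniform-in-$k$ $\mathcal{C}^1$ dependence; once it is in hand, combining with the $k=0$ case via Remark~\ref{inv-L0} completes the construction of \eqref{right-inv-cal-C1}.
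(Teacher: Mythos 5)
Your construction of the right inverse itself is essentially the paper's: the paper likewise fixes $T_0$ so that $Ce^{-\zeta T_0}\le\frac12$, inverts $e^{T\nLk(\eps)}-I$ for $k\ne 0$ using the uniform exponential decay of Lemma~\ref{Lk-est}, quotes Remark~\ref{inv-L0} for the $k=0$ fiber, and assembles the pieces. One omission: the regularity claim \eqref{right-inv-cal-C1} concerns $\tL(\eps)$, not only $\nL(\eps)$, so you must also handle the finite-dimensional unstable block via \eqref{L-Pi-pos-neg} and \eqref{inver-pos}: $e^{T\pL(\eps)}-I$ is invertible on $\Range(\Pi^{\mathrm{pos}}(\eps))$ (spectrum in the open right half-plane, finite dimension) and depends smoothly on $(\eps,T)$. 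This is easy, but it is part of the statement and cannot be skipped.

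The genuine gap is in your mechanism for the $\mathcal{C}^1$ dependence. The operators $\nLk(\eps)$ are of hyperbolic--parabolic type, so the semigroups $e^{t\nLk(\eps)}$ are not analytic: the resolvent is only bounded, not decaying, along vertical lines, hence the Dunford representation over a contour $\Gamma\subset\{\Re\lambda<-\zeta/2\}$ does not converge and is not available here (moreover the bound \eqref{res-k-est} you invoke is stated only for $\Re\lambda>0$; its high-frequency extension \eqref{part-est1} reaches $\Re\lambda>-\theta_{00}$ but again gives only boundedness). Likewise, ``the smoothing afforded by composing with $e^{(T/2)\nLk(\eps)}$'' is precisely what is absent in the hyperbolic modes --- lack of parabolic smoothing is the central difficulty of the paper --- and even the $T$-derivative is not ``immediate,'' since $\partial_T e^{T\nLk(\eps)}=\nLk(\eps)e^{T\nLk(\eps)}$ already loses derivatives. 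The paper's device is different: it does not try to recover the lost derivatives, but absorbs them in the choice of spaces, asserting (using the regularity properties of $L(\eps)$ and the block structure (A1)) that $(\eps,T)\mapsto(e^{T\nLk(\eps)})_{k\in\Zzer}$ is $\mathcal{C}^1$ only as a map into $L\big(\bigoplus_{k\ne0}\hat{H}_k^2(\R,\C^n),\bigoplus_{k\ne0}\hat{H}_k^1(\R,\C^n)\big)$, see \eqref{C1-good}; the one-derivative gap between $\CalB_2$ and $\CalB_1$ in \eqref{def-cal-X2B2} exists exactly for this purpose, following \cite{TZ3}. You should therefore replace the Dunford/smoothing step by a direct verification (e.g.\ a Duhamel formula for $\partial_{\eps,T}e^{T\nLk(\eps)}$, exploiting that the troublesome coefficient derivatives act through the parabolic components) that the loss of derivatives is compensated by measuring the derivative in the weaker $L(\CalB_2,\CalB_1)$ norm, rather than by a smoothing property the semigroup does not have.
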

\begin{proof} Without loss of generality we can choose $T_0>0$ large enough so that $Ce^{-\zeta T_0}\leq\frac{1}{2}$, where $C$ and $\nu$
are defined in \eqref{res-k-est} and \eqref{sem-nLk-est}.
The invertibility result follows from Lemma~\ref{Lk-est}, Remark~\ref{inv-L0}, \eqref{L-Pi-pos-neg} and \eqref{inver-pos}.
Next, we note that using the regularity properties of $L(\eps)$ we have that
\be\label{C1-good}
(\eps,T)\to(e^{T\nLk(\eps)})_{k\in\Zzer}:(-\delta,\delta)\times [T_0,T_1]\to L\Big(\bigoplus_{k\in\Z\setminus\{0\}}\hat{H}_k^2(\R,\C^n),\bigoplus_{k\in\Z\setminus\{0\}}\hat{H}_k^1(\R,\C^n)\Big).
\ee
is $C^1$. The lemma follows shortly from Remark~\ref{inv-L0}, \eqref{L-Pi-pos-neg}, \eqref{inver-pos} and \eqref{C1-good}.
\end{proof}
Now we are ready to apply the Lyapunov-Schmidt reduction on \eqref{pert-sys5}. We follow the procedure described in detail in \cite{TZ2} and further developed in \cite{TZ3}. Using the result from Lemma~\ref{inv-lem}, we note that the infinite-dimensional part of equation \eqref{pert-sys5}, $\mathrm{\bf Disp}_3(a_1,a_2,\tbv_0,\eps,T)=0$, is equivalent to
\begin{equation}\label{D4-eq}
\tbv_0=(I-e^{T\tL(\eps)})_{|\CalX_2}^\dagger{\bf N}_3(a_1,a_2,\tbv_0,\eps,T)+h,
\end{equation}
for $h\in\ker(I-e^{T\tL(\eps)})\cap\CalX_1$, where ${\bf N}_j$, $j=1,2,3$, denote the nonlinear integral terms from the definition of
$\mathrm{\bf Disp}$ in \eqref{def-disp1}-\eqref{def-disp4}. Another key element of the analysis in \cite{TZ2} is to use the (right) invertibility result from Lemma~\ref{inv-lem} to show that $(I-e^{T\tL(\eps)})_{|\CalX_2}^\dagger$ is bounded on $\Range{\bf N}_3$ and then prove contractivity by
the quadratic bounds of ${\bf N}_3$. In the next lemma we collect some estimates satisfied by the nonlinearities ${\bf N}_j$, $j=1,2,3$.
\begin{lemma}\label{N-estm}
Under Hypotheses (A1)--(A3), (B1), (H0)--(H4) and $(D_\eps)$,  the function ${\bf N}_j:\C^2\times H^2(\R\times [-\pi,\pi],\C^n)\times(-\delta,\delta)\times (0,\infty)\to\CC$, $j=1,2$, is quadratic order and $C^1$. Moreover,
for any $M>0$  the map ${\bf N}_3:\C^2\times B_{\CalX_1}(0,M)\times(-\delta,\delta)\times (0,\infty)\to\CalX_2$ is quadratic order and $C^1$ from
$\C^2\times\CalB_1\times(-\delta,\delta)\times (0,\infty)$ to $\CalB_2$. More precisely the following estimates hold true:
\begin{align*}
&|{\bf N}_j(a_1,a_2,\tbv_0,\eps,T)|+|\partial_{\eps,T}{\bf N}_j(a_1,a_2,\tbv_0,\eps,T)|\leq c(|a_1|+|a_2|+\|\tbv_0\|_{\CalB_1})^2\\
&|\partial_{a_k}{\bf N}_j(a_1,a_2,\tbv_0,\eps,T)|+\|\partial_{\tbv_0}{\bf N}_j(a_1,a_2,\tbv_0,\eps,T)\|_{\CalB_1}\leq c(|a_1|+|a_2|+\|\tbv_0\|_{\CalB_1})\\
&|{\bf N}_3(a_1,a_2,\tbv_0,\eps,T)|+\|\partial_{\eps,T}{\bf N}_3(a_1,a_2,\tbv_0,\eps,T)\|_{\CalB_2}\leq c(|a_1|+|a_2|+\|\tbv_0\|_{\CalX_1})^2\\
&\|\partial_{a_k}{\bf N}_3(a_1,a_2,\tbv_0,\eps,T)\|_{\CalB_2}+\|\partial_{\tbv_0}{\bf N}_3(a_1,a_2,\tbv_0,\eps,T)\|_{L(\CalB_1,\CalB_2)}\leq c(|a_1|+|a_2|+\|\tbv_0\|_{\CalX_1}),
\end{align*}
for any $j=1,2$ and $k=1,2$, whenever $\|\tbv_0\|_{\CalX_1}\leq M$.
\end{lemma}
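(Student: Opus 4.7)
The key structural input is the quadratic divergence form of $\CalN$. From \eqref{def-nonlinear} together with \eqref{part1-e-est}--\eqref{part2-e-est} and hypotheses (A1)--(A2), one has $\CalN(\eps;\bv)=\sum_{j,k}\partial_j\binom{0}{p^{jk,\eps}(\bv)}-\sum_j\partial_j\binom{0}{q^{j,\eps}(\bv)}$ with $p^{jk,\eps}(\bv), q^{j,\eps}(\bv)=\mathcal{O}(|\bv|(|\bv|+|\nabla_x\bv|))$ and, by (H0), of class $\mathcal{C}^{\nu-2}$ in $\bv$. The solution $\bv$ of \eqref{pert-sys} with initial data $\bv(0)=a_1\bwe+a_2S\bwe+\tbv_0$ satisfies, by Proposition~\ref{energyest}, the propagation bound $\|\bv(\cdot,t)\|_{H^s}\leq C\|\bv(0)\|_{H^s}$ for $t\in[0,T]$, together with Fr\'echet differentiability of the time-$t$ map in $H^s$ provided by the linearization-error estimate \eqref{varen}. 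The eigenfunctions $\bwe, S\bwe$ and the dual eigenfunctions $\tbwe, S\tbwe$ associated with the isolated eigenvalues $\lambda_\pm(\eps)$ decay exponentially as $|x_1|\to\infty$ by the standard Gap Lemma (isolated eigenvalues off the essential spectrum); hence $\|\bv(0)\|_{H^s}\lesssim|a_1|+|a_2|+\|\tbv_0\|_{\CalB_1}$, and likewise with $\CalX_1$ in place of $\CalB_1$.

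For $\mathbf{N}_1$ and $\mathbf{N}_2$, integrate by parts inside $\Phi(\phi(s),\psi(s),\tbv(s),\eps)=\langle\CalN(\eps;\bv(s)),\tbwe\rangle_{L^2}$ to move one spatial derivative onto $\tbwe$; since the latter is smooth and exponentially decaying, the pairing is bounded by $C\|\bv(s)\|_{H^1}^2$. Integrating over $s\in[0,T]$ and using the propagation bound yields $|\mathbf{N}_j(a_1,a_2,\tbv_0,\eps,T)|\leq CTe^{CT}(|a_1|+|a_2|+\|\tbv_0\|_{\CalB_1})^2$. The derivative bounds follow identically after differentiating under the integral and appealing to the $C^1$-dependence of $(\phi,\psi,\tbv)$ on the data; each such derivative drops one power of $\bv$, giving the linear-order bounds listed in the statement. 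Regularity in $T$ and $\eps$ is immediate from smoothness of $e^{T\lambda_+(\eps)}$ and of $\lambda_+(\eps)$.

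The main obstacle is the $\CalX_2$-valued estimate for $\mathbf{N}_3$, since $\CalX_2$ combines the exponential weight $e^{\eta(1+|x_1|^2)^{1/2}}$ with a divergence-form structure in its $k=0$ component $X_2=\partial_{x_1}H^1_{2\eta}\cap X_1$. For nonzero Fourier modes $k\neq0$ no weight is needed and the bound on each $\hat{H}^1_k$ summand follows at once from standard $H^s$ propagation paired with the quadratic bound on $\CalN$. For the $k=0$ component we combine the intrinsic $\partial_{x_1}$-divergence structure of $\CalN$ (which supplies the required $\partial_{x_1}$ factor free of charge and places $p^{jk,\eps}(\bv), q^{j,\eps}(\bv)$ in $H^1_{2\eta}$ by Moser-type estimates using $1$D Sobolev embedding) with a weighted analogue of Proposition~\ref{energyest}: conjugating the energy functional $\CalE$ by $e^{\eta(1+|x_1|^2)^{1/2}}$ produces commutator terms that, thanks to (A1) and the uniform ellipticity (A2)(iii), are either lower-order or absorbable into the positive-definite diffusive contribution, yielding $\|\bv(\cdot,t)\|_{X_1}\lesssim\|\bv(0)\|_{X_1}$. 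The projector $\Pi(\eps)=I-\Pi_+(\eps)-\Pi_-(\eps)$ is bounded on $\CalX_1$ and on $\CalX_2$ by \eqref{Pipm-rep} together with the exponential decay of $\bwe, S\bwe, \tbwe, S\tbwe$, and $\{e^{t\tL(\eps)}\}$ is uniformly bounded on $\CalX_2$ for $t\in[0,T_1]$ by Lemma~\ref{inv-lem} together with Lemma~\ref{Lk-est}. Combining these ingredients with the quadratic bound on $\CalN$ and differentiating under the integral sign yields the stated quadratic and $C^1$ estimates for $\mathbf{N}_3$; the asymmetric regularity $\CalB_1\to\CalB_2$ (rather than $\CalX_1\to\CalX_2$) for the derivatives reflects the standard loss of one derivative of weight in differentiating through the nonlinear composition.
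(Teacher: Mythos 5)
Your overall route is the same as the paper's: the paper proves Lemma~\ref{N-estm} exactly by the appeal you make, namely the variation-of-constants expressions \eqref{def-disp1}--\eqref{def-disp4} for ${\bf N}_j$ combined with the propagation and linearization-error bounds of Proposition~\ref{energyest}, and your handling of ${\bf N}_1,{\bf N}_2$ (pair against the smooth, exponentially localized duals $\tbwe$, $S\tbwe$, integrate by parts to avoid the loss of a derivative and to get quadratic dependence on the weak norm, then differentiate under the integral) is a sensible fleshing-out of that citation.

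There is, however, a concrete gap in the step you add for the $\CalX_2$-valued bound on ${\bf N}_3$. You invoke a weighted analogue of Proposition~\ref{energyest} to get $\|\bv(\cdot,t)\|_{X_1}\lesssim\|\bv(0)\|_{X_1}$, but by \eqref{def-cal-X1B1} only the $k=0$ Fourier component of $\CalX_1$ carries the exponential weight; the summands $\hat H^1_k$, $k\neq 0$, in \eqref{Hk1-product} are unweighted. Thus for $\tbv_0\in B_{\CalX_1}(0,M)$ the initial datum $\bv(0)=a_1\bwe+a_2S\bwe+\tbv_0$ (plus conjugates) need not lie in any exponentially weighted space, and the weighted propagation estimate you describe cannot be applied to the full solution. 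This is not a cosmetic point: the $k=0$ component of the quadratic nonlinearity is the $x_2$-average of the quadratic terms and therefore couples pairs of nonzero (unweighted) modes, so exponential localization in $x_1$ of the $k=0$ part of ${\bf N}_3$, i.e.\ membership in $X_2=\partial_{x_1}H^1_{2\eta}\cap X_1$, does not follow from the $\partial_{x_1}$-divergence structure together with your weighted estimate; your argument does yield the unweighted $\CalB_2$ statements, which only require the $\partial_{x_1}L^1\cap B_1$ structure. A secondary, more minor, issue is that Proposition~\ref{energyest} is an $H^s$, $s\ge2$, estimate while the claimed bounds are in the weaker $\CalB_1$ norm on $\CalX_1$-balls; your integration by parts takes care of this for ${\bf N}_{1,2}$, but for ${\bf N}_3$ one still needs the standard weak-norm Lipschitz bounds on bounded strong-norm sets as in \cite{TZ3}. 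The paper's own two-line proof does not supply the weighted step either, so your instinct to make it explicit is right, but the mechanism as stated would fail and needs repair (for instance, weighted control of the zero-mode mode-coupling terms, or a setup in which all Fourier components are weighted, as in the one-dimensional setting of \cite{TZ3}).
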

\begin{proof}
These estimates follow shortly from the estimates from Proposition~\ref{energyest} and the variation of constants formulas of ${\bf N}_j$, given in \eqref{def-disp1}-\eqref{def-disp4}.
\end{proof}
We conclude this section with this a result describing $\ker(I-e^{T\tL(\eps)})$.
\begin{lemma}\label{sem-ker}
Under Hypotheses (A1)--(A3), (B1), (H0)--(H4) and $(D_\eps)$, there exist smooth functions $h_1,\dots,h_\ell:(-\delta,\delta)\to H^2(\R\times [-\pi,\pi],\C^n)$ such that $\{h_j(\eps):j=1,\dots,\ell\}$
is a basis of $\ker(I-e^{T\tL(\eps)})$ of dimension $\ell$ (defined in (H4)) for all $T>0$.
\end{lemma}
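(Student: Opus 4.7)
The strategy is to exhibit a basis $h_1(\eps),\dots,h_\ell(\eps)$ explicitly as tangent vectors to the $\ell$-dimensional manifold of shock profiles from (H4), to verify directly that these lie in $\ker(I-e^{T\tilde L(\eps)})$, and then to show that no further elements exist by combining the Fourier decomposition in $x_2$ with the spectral information already available from Lemma~\ref{Lk-est} and Remark~\ref{inv-L0}. The whole point of passing from $L(\eps)$ to $\tilde L(\eps)$ is that the latter operator has been stripped of the eigenvalues $\lambda_\pm(\eps)$, so on $\Range(\Pi(\eps))$ the only piece of spectrum of $L(\eps)$ left on the closed right half-plane is the $\ell$-fold zero eigenvalue of $L_0(\eps)$.

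For the construction, (H4) supplies a smooth parametrization $\mathbf{a}\mapsto \bar\bu^{\eps,\mathbf{a}}$ of a neighborhood of $\bar\bu^\eps$ in the profile manifold, with $\bar\bu^{\eps,0}=\bar\bu^\eps$ and the common endstates $\bu_\pm^\eps$ independent of $\mathbf{a}$. Set
\[
h_j(\eps):=\partial_{a_j}\bar\bu^{\eps,\mathbf{a}}\big|_{\mathbf{a}=0},\quad j=1,\dots,\ell,
\]
regarded as $x_2$-independent functions. By the exponential dichotomy at the endstates they decay exponentially in $x_1$, so $h_j(\eps)\in X_1\subset H^2(\R\times[-\pi,\pi],\C^n)$, and smoothness in $\eps$ follows from (H0) together with smooth dependence of the profile manifold on parameters. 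Differentiating $\CalF(\eps;\bar\bu^{\eps,\mathbf{a}})\equiv 0$ in $a_j$ at $\mathbf{a}=0$ yields $L(\eps)h_j(\eps)=0$. Since $h_j(\eps)$ is $x_2$-independent it has Fourier support at $k=0$ only, hence lies in $\Range(\Pi(\eps))$; alternatively, $L(\eps)$ commutes with $\Pi_\pm(\eps)$ by~\eqref{inv-PiL} and $\lambda_\pm(\eps)\neq 0$ for $|\eps|$ small (since $\omega(0)\neq 0$), so $\lambda_\pm(\eps)\Pi_\pm(\eps)h_j(\eps)=\Pi_\pm(\eps)L(\eps)h_j(\eps)=0$ forces $\Pi(\eps)h_j(\eps)=h_j(\eps)$. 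Consequently $\tilde L(\eps)h_j(\eps)=0$ and $e^{T\tilde L(\eps)}h_j(\eps)=h_j(\eps)$ for every $T>0$, while linear independence of $\{h_j(\eps)\}$ is inherited from the immersion property of the parametrization.

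For the reverse inclusion, take $h\in\ker(I-e^{T\tilde L(\eps)})$ and expand $h=\sum_k \hat h_k(x_1)e^{\rmi kx_2}$. For each $k\neq 0$, the decomposition $\tilde L_k=L^{\mathrm{pos}}_k\oplus L^{\mathrm{neg}}_k$ combined with invertibility of $e^{TL^{\mathrm{pos}}_k}-I$ on the finite-dimensional positive-part subspace and the uniform exponential decay of Lemma~\ref{Lk-est} gives $\hat h_k=0$. The case $k=0$ is the main obstacle: since $\Pi(\eps)$ is the identity on this mode we must show $\ker(I-e^{TL_0(\eps)})$ has dimension exactly $\ell$, and here $\lambda=0$ is embedded in the essential spectrum of $L_0(\eps)$ on the unweighted space, so the point-spectrum spectral-mapping theorem does not apply directly. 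The resolution is to pass to the exponentially weighted space $X_1=H^2_\eta$, in which the essential spectrum of $L_0(\eps)$ is pushed into $\{\mathrm{Re}\,\lambda<0\}$ and $\lambda=0$ becomes an isolated eigenvalue whose algebraic multiplicity equals $\ell$ by $(D_\eps)$(ii) and Remark~\ref{Evans-D}; combined with the $\ell$-dimensional geometric kernel $\mathrm{span}\{h_j(\eps)\}$ produced above, this forces the eigenvalue to be semisimple and $\ker(I-e^{TL_0(\eps)})\cap X_1=\mathrm{span}\{h_j(\eps)\}$ for every $T>0$. The one-dimensional exponential-dichotomy/Evans function machinery of \cite{Z1,Z2}, already invoked in \cite[Prop.~4]{TZ3} to establish Remark~\ref{inv-L0}, then upgrades this identification from $X_1$ to $H^2(\R,\C^n)$: any $\hat h_0\in H^2$ fixed by $e^{TL_0(\eps)}$ is, by spectral mapping for point spectrum on Hilbert spaces, an $L_0(\eps)$-eigenfunction at some $\lambda\in\frac{2\pi\rmi}{T}\Z$, which by $(D_\eps)$(i) and one-dimensional stability can only be $\lambda=0$, and the resulting bounded ODE solution automatically decays exponentially at $\pm\infty$ and already lies in $X_1$.
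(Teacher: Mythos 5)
Your overall route---constructing the $\ell$ tangent directions from (H4) as an $\eps$-smooth basis of $\ker\tL(\eps)$, and then identifying $\ker(I-e^{T\tL(\eps)})$ with this kernel via Fourier decomposition in $x_2$, the estimates of Lemma~\ref{Lk-est} for $k\neq 0$, and point-spectrum semigroup theory for $k=0$---is exactly the argument the paper sketches, written out in detail. Most of it is sound: for $k\ne 0$ the bound \eqref{sem-nLk-est} makes the spectral radius of $e^{T\nLk(\eps)}$ strictly less than $1$ for every $T>0$, and $e^{T\pL(\eps)}-I$ is invertible on the finite-dimensional unstable piece, so those modes contribute nothing; moreover the spectral mapping theorem for point spectrum (obtained by Fej\'er averaging of $t\mapsto e^{-\lambda t}e^{tL_0(\eps)}\hat h_0$ over a period) holds for any $C_0$-semigroup irrespective of where the essential spectrum lies, so the worry that motivates your weighted-space detour is not actually an obstruction.

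The weighted-space step itself, however, contains a genuine error: it is not true in general that on $X_1=H^2_\eta$ (weight $e^{\eta\langle x_1\rangle}$ growing at both ends) the essential spectrum of $L_0(\eps)$ is pushed into $\{\Re\lambda<0\}$ with $\lambda=0$ an isolated eigenvalue. A symmetric, decay-enforcing weight moves the essential-spectrum branches of \emph{incoming} characteristic families to the left but moves the \emph{outgoing} ones to the right: for a convection--diffusion mode with speed $a>0$ at $x_1=+\infty$, conjugation by $e^{\eta\langle x_1\rangle/2}$ replaces $\partial_{x_1}$ by $\partial_{x_1}-\eta/2$ and the symbol acquires real part $a\eta/2+\eta^2/4>0$ at frequency zero. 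For a Lax shock of a genuine system there are outgoing families, so no weighted spectral gap at the origin is available; this is precisely why \cite{TZ2,TZ3} work with the derivative spaces $X_2=\partial_{x_1}H^1_{2\eta}\cap X_1$ and pointwise Green-function bounds rather than a weighted spectral-gap argument. Fortunately the conclusion you actually need---that $\ker L_0(\eps)$ in $L^2$ is exactly $\ell$-dimensional---does not require it: any $L^2$ solution of $L_0(\eps)w=0$ integrates once to the linearized profile equation and, containing no neutral modes of the asymptotic systems, decays exponentially; by the transversality in (H4) the exponentially decaying solutions of the linearized profile ODE form precisely the $\ell$-dimensional tangent space to the manifold of connecting orbits, which is also the count encoded in $(D_\eps)$(ii) via Remark~\ref{Evans-D}. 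With that substitution (and noting that semisimplicity is not needed, since only the geometric kernel of $I-e^{TL_0(\eps)}$ enters), your ``upgrade'' step alone closes the argument and the proof coincides with the paper's.
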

\begin{proof}
From hypothesis $(D_\eps)$ (ii) one can readily infer the existence of an $\ell$-dimensional, $\eps$-smooth basis of $\ker\tL(\eps)$. The lemma follows shortly by using the intricate connections between
$\ker\tL(\eps)$ and $\ker(I-e^{T\tL(\eps)})$ that one can readily check by using elementary semigroup theory.
\end{proof}

\section{Proof of the main result}\label{Proof-MT}
In this section we collect the result from the previous sections to prove the existence of an $O(2)$-Hopf bifurcation from our  one-parameter family of standing viscous planar shocks. First, we solve for $\tbv_0$ in the infinite-dimensional equation $\mathrm{\bf Disp}_3=0$.
\begin{lemma}\label{l4.1}
Under Hypotheses (A1)--(A3), (B1), (H0)--(H4) and $(D_\eps)$, there exists $\delta>0$ small enough and a map $\bz:\C^2\times (-\delta,\delta)\times [T_0,T_1]\times\RR^\ell\to\CalX_1$ that is $C^1$ from $\C^2\times (-\delta,\delta)\times [T_0,T_1]\times\RR^\ell$ to $\CalB_1$ such that for any $a_1,a_2\in B_\CC(0,\delta)$, $\eps\in(-\delta,\delta)$ and $T\in [T_0,T_1]$, the local solutions of equation $\mathrm{\bf Disp}_3(a_1,a_2,\tbv_0,\eps, T)=0$ are given by $\tbv_0=\bz(a_1,a_2,\eps,T,b_1,\dots,b_\ell)$ for some $(b_1,\dots,b_\ell)\in\RR^\ell$.
\end{lemma}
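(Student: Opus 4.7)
The plan is to reduce $\mathrm{\bf Disp}_3(a_1,a_2,\tbv_0,\eps,T)=0$ to a fixed-point equation and apply a contraction/uniform-contraction argument, exactly as in the Lyapunov--Schmidt framework of \cite{TZ2,TZ3}. First I would use the right invertibility of $I - e^{T\tL(\eps)}$ from Lemma~\ref{inv-lem} and the decomposition
$\tbv_0 \in \ker(I - e^{T\tL(\eps)}) \oplus \bigl(\mbox{range of the right inverse}\bigr)$
to rewrite $\mathrm{\bf Disp}_3=0$ in the equivalent form \eqref{D4-eq}; then use Lemma~\ref{sem-ker} to parametrize the kernel as $h = \sum_{j=1}^\ell b_j h_j(\eps)$, with $(b_1,\dots,b_\ell) \in \R^\ell$ and $h_j$ smooth in $\eps$. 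This reduces the problem to finding a fixed point of
\[
\Phi(\tbv_0) := (I-e^{T\tL(\eps)})_{|\CalX_2}^\dagger \, {\bf N}_3(a_1,a_2,\tbv_0,\eps,T) + \sum_{j=1}^\ell b_j h_j(\eps),
\]
viewed as a function of $\tbv_0 \in \CalX_1$ depending on parameters $(a_1,a_2,\eps,T,b)$.

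Next I would establish self-mapping on a small ball. By Lemma~\ref{N-estm}, ${\bf N}_3$ maps $\C^2 \times B_{\CalX_1}(0,M) \times (-\delta,\delta) \times (0,\infty) \to \CalX_2$ quadratically, and by Lemma~\ref{inv-lem} the right inverse is uniformly bounded $\CalX_2 \to \CalX_1$ for $(\eps,T) \in (-\delta,\delta) \times [T_0,T_1]$. Hence, for $|a_1|,|a_2|,|b_j|$ and $\rho > 0$ sufficiently small,
\[
\|\Phi(\tbv_0)\|_{\CalX_1} \lesssim (|a_1|+|a_2|+\rho)^2 + \sum_j |b_j|\, \|h_j(\eps)\|_{\CalX_1} \leq \rho,
\]
so $\Phi$ carries $\overline{B_{\CalX_1}(0,\rho)}$ into itself. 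For contractivity I would use the weaker norm: Lemma~\ref{N-estm} gives $\|\partial_{\tbv_0}{\bf N}_3\|_{L(\CalB_1,\CalB_2)} \lesssim |a_1|+|a_2|+\|\tbv_0\|_{\CalX_1}$, and the right inverse is bounded $\CalB_2 \to \CalB_1$, so $\Phi$ is a contraction in the $\CalB_1$ metric on $\overline{B_{\CalX_1}(0,\rho)}$ (which is closed and bounded in $\CalB_1$ as well). The Banach fixed-point theorem applied in this two-norm setup then produces a unique fixed point $\bz(a_1,a_2,\eps,T,b_1,\dots,b_\ell) \in \CalX_1$.

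For $C^1$ regularity of $\bz$ into $\CalB_1$, I would invoke the Uniform Contraction Principle (implicit function theorem): both $(\eps,T) \mapsto (I - e^{T\tL(\eps)})^\dagger$ in $L(\CalB_2,\CalB_1)$ (from Lemma~\ref{inv-lem}) and ${\bf N}_3: \C^2 \times \CalB_1 \times (-\delta,\delta) \times (0,\infty) \to \CalB_2$ (from Lemma~\ref{N-estm}) are $C^1$, so $\Phi$ is $C^1$ jointly in $(\tbv_0, a_1, a_2, \eps, T, b)$ in the $\CalB_1$ topology, and its $\tbv_0$-Fr\'echet derivative has $\CalB_1$-operator norm bounded by a small constant. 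The main obstacle, and the reason this is not immediate, is precisely the mismatch between the ``regularity space'' $\CalX_1$ (with exponential weight, where self-mapping and iterates live) and the ``contraction space'' $\CalB_1$ (without weight, where linearization loses no derivatives), which forces one to run the argument in two scales simultaneously; this is the hyperbolic--parabolic manifestation in multi-D of the difficulty resolved in the 1D case in \cite{TZ3}, and it is handled here using the balance provided by the augmented system together with Proposition~\ref{energyest}.
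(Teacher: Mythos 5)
Your proposal is correct and follows essentially the same route as the paper: the paper defines exactly your fixed-point map $\bF$ in \eqref{def-Z}, observes via \eqref{D4-eq} and Lemma~\ref{sem-ker} that $\mathrm{\bf Disp}_3=0$ is equivalent to \eqref{D4-Z-eq}, and then invokes Lemma~\ref{inv-lem}, Lemma~\ref{N-estm} and the two-norm ($\CalX_1$ boundedness / $\CalB_1$ contraction) machinery of \cite[Section 2]{TZ2} and \cite[Section 4]{TZ3} to obtain $\bz$ and its $C^1$ dependence. The only difference is that you spell out the self-mapping, contraction, and uniform-contraction details that the paper delegates to the cited references.
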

\begin{proof} We define the map
$\bF:\C^2\times\CalX_1\times(-\delta,\delta)\times [T_0,T_1]\times\RR^\ell\to\CalX_1$ by
\begin{equation}\label{def-Z}
\bF(a_1,a_2,\tbv_0,\eps,T,b_1,\dots,b_\ell)=(I-e^{T\tL(\eps)})_{|\CalX_2}^\dagger{\bf N}_3(a_1,a_2,\tbv_0,\eps,T)+\sum\limits_{j=1}^{\ell}b_jh_j(\eps).
\end{equation}
From \eqref{D4-eq} we have that the equation $\mathrm{\bf Disp}_3(a_1,a_2,\tbv_0,\eps,T)=0$ is equivalent to the fixed point equation
\begin{equation}\label{D4-Z-eq}
\tbv_0=\bF(a_1,a_2,\tbv_0,\eps,T,b_1,\dots,b_\ell),
\end{equation}
for some $b=(b_1,\dots,b_\ell)\in\RR^\ell$. Using the results from \cite[Section 2]{TZ2} and \cite[Section 4]{TZ3} we can show that the map $\bF$ is bounded from $\C^2\times\CalX_1\times(-\delta,\delta)\times [T_0,T_1]\times\RR^\ell$ to $\CalX_1$ and $C^1$ from $\C^2\times\CalB_1\times(-\delta,\delta)\times [T_0,T_1]\times\RR^\ell$ to $\CalX_1$. Moreover, using the results from Lemma~\ref{inv-lem} and Lemma~\ref{N-estm} we readily obtain appropriate estimates on $\bF$ and its partial derivatives. Using again the results from
\cite[Section 2]{TZ2} we infer that there exists a map $\bz:\C^2\times (-\delta,\delta)\times [T_0,T_1]\times\RR^\ell\to\CalX_1$ that is $C^1$ from $\C^2\times (-\delta,\delta)\times [T_0,T_1]\times\RR^\ell$ to $\CalB_1$ such that $\tbv_0=\bz(a_1,a_2,\eps,T,b_1,\dots,b_\ell)$ solves \eqref{D4-Z-eq} locally, proving the lemma.
\end{proof}
\begin{remark}\label{r4.2}
Since the function $\bz$ is $C^1$ from $\CC^2\times (-\delta,\delta)\times [T_0,T_1]\times\RR^\ell$ to $\CalB_1$, we can use the results from Lemma~\ref{inv-lem} and Lemma~\ref{N-estm} to infer that
\begin{equation}\label{kappa-0}
\bz(0,0,0,T,0,\dots,0)=0.
\end{equation}
Moreover, by differentiating with respect to $T$ in \eqref{D4-Z-eq}, one can easily check that
\be\label{pa-varkappa}
\partial_T\bz(0,0,0,T,0,\dots,0)=0.
\ee
\end{remark}
At this point we note that to solve the equation $\mathrm{\bf Disp}=0$ it is enough to solve a system of two scalar (complex) equations, with variables $a_1,a_2,\eps,T$ and parameters $b_1,\dots,b_\ell$.
Next, we choose $k_*\in\ZZ$, with $|k_*|$ large enough such that $\frac{2 k_*\pi}{\omega(0)}\in (T_0,T_1)$.
Let $T_*:(-\delta,\delta)\to\RR$ be the function defined by $T_*(\eps)=\frac{2k_*\pi}{\omega(\eps)}$.
We plug in
\begin{equation}\label{subst-v,T}
\tbv_0=\bz(a_1,a_2,\eps,T,b_1,\dots,b_\ell)\quad\mbox{and}\quad T=T_*(\eps)(1+\mu),\quad \mu\in(-\delta,\delta)
\end{equation}
in \eqref{def-disp1} and \eqref{def-disp2} to obtain the system
\be\label{tilde-N1,2}
\left\{\begin{array}{ll}
\tilde{\bf N}_1(a_1,a_2,\eps,\mu,b_1,\dots,b_\ell)=0\\
\tilde{\bf N}_2(a_1,a_2,\eps,\mu,b_1,\dots,b_\ell)=0
\end{array}\right..
\ee
To solve the remaining $\CC^2$ system in variables $a_1$, $a_2$, with bifurcation parameters $\eps$ and $\mu$, and involving parameters $(b_1,\dots,b_\ell)\in(-\delta,\delta)^\ell$, we need to identify the symmetries that are satisfied by this system, which are inherited from the original system \eqref{pert-sys} or its reformulation \eqref{int-pert-sys}.
\begin{lemma}\label{l4.3}
Under Hypotheses (A1)--(A3), (B1), (H0)--(H4) and $(D_\eps)$, the finite-dimensional system \eqref{tilde-N1,2} is invariant in the variables $(a_1,a_2)$ under the symmetry $(z_1,z_2)\to(z_2,z_1)$ and the rotation $(z_1,z_2)\to (z_1e^{\rmi\theta},z_2e^{-\rmi\theta})$ for any $\theta\in\RR$.
\end{lemma}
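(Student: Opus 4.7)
The plan is to transfer the $O(2)$ symmetry of \eqref{sys}---the reflection $S$ and the rotations $R(\theta)$---all the way through the Lyapunov--Schmidt reduction, using that both symmetries commute with the full nonlinear evolution and with the spectral projectors onto $\Sigma_\pm(\eps)$. First I would read off, from Lemma~\ref{Sig-GS} and Remark~\ref{new-3.5}, how these actions look in the coordinates $(a_1,a_2,\tbv_0)$: since $R(\theta)\bwe = e^{\rmi\ale\theta}\bwe$ and $R(\theta)S\bwe = e^{-\rmi\ale\theta}S\bwe$, applying $R(\theta)$ to the ansatz $\bv = a_1\bwe + a_2 S\bwe + \tbv_0$ yields coordinates $(a_1 e^{\rmi\ale\theta}, a_2 e^{-\rmi\ale\theta}, R(\theta)\tbv_0)$, while $S$ (using $S^2=I$) yields $(a_2, a_1, S\tbv_0)$.

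Next I would verify that the full displacement map $\mathbf{Disp}$ of \eqref{def-disp1}--\eqref{def-disp4} is equivariant under both actions. The flow of \eqref{new-pert-sys} commutes with $R(\theta)$ and $S$ by \eqref{inv-N} and \eqref{inv-LRS}, and from the explicit formula \eqref{Pipm-rep} together with $S^*=S$, $R(\theta)^*=R(-\theta)$ and Remark~\ref{new-3.5} one checks directly that $\Pi_\pm(\eps)$ and $\Pi(\eps)$ commute with both $R(\theta)$ and $S$; in particular $\tL(\eps)$ and hence $e^{T\tL(\eps)}$ commute with these symmetries. It then follows that $\mathbf{Disp}_j$ ($j=1,2$) transforms multiplicatively by $e^{\pm\rmi\ale\theta}$ under $R(\theta)$ and by the swap $\mathbf{Disp}_1 \leftrightarrow \mathbf{Disp}_2$ under $S$, while $\mathbf{Disp}_3$ intertwines with $R(\theta)$ and with $S$ on the range space.

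The third and most delicate step, which I expect to be the main obstacle, is to push this equivariance through the solution map $\bz$ produced by Lemma~\ref{l4.1}. By Lemma~\ref{sem-ker} and (H4), the kernel $\ker(I - e^{T\tL(\eps)})$ arises from the $\ell$-dimensional manifold of $x_2$-independent standing profiles and is therefore itself $R(\theta)$- and $S$-invariant; one must choose the right inverse $(I - e^{T\tL(\eps)})_{|\CalX_2}^\dagger$ and the basis $\{h_j(\eps)\}$ equivariantly, so that $R(\theta)$ and $S$ act trivially on the parameters $(b_1,\dots,b_\ell)$ (automatic in the Lax case $\ell=1$, where $h_1 = \partial_{x_1}\bar\bu^\eps$ is $x_2$-independent and $M$-fixed because the planar wave itself is $S$-invariant). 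Combined with the uniqueness clause of the contraction argument behind Lemma~\ref{l4.1}, this yields
\[
R(\theta)\bz(a_1,a_2,\eps,T,b) = \bz(a_1 e^{\rmi\ale\theta}, a_2 e^{-\rmi\ale\theta}, \eps, T, b), \quad S\,\bz(a_1,a_2,\eps,T,b) = \bz(a_2,a_1,\eps,T,b).
\]

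Finally, substituting $\tbv_0 = \bz(\cdot)$ and $T = T_*(\eps)(1+\mu)$ into $\mathbf{Disp}_1,\mathbf{Disp}_2$ and combining the previous three steps delivers $\tilde{\mathbf{N}}_j \mapsto e^{\pm\rmi\ale\theta}\tilde{\mathbf{N}}_j$ under the rotation and $\tilde{\mathbf{N}}_1(a_2,a_1,\cdot,\cdot,\cdot) = \tilde{\mathbf{N}}_2(a_1,a_2,\cdot,\cdot,\cdot)$ under $S$. Since $\ale(\eps)\neq 0$ by Lemma~\ref{Sig-GS}(ii), the reparametrization $\theta' := \ale(\eps)\theta$ identifies the rotation subgroup with the full one-parameter group $(a_1,a_2)\to(a_1 e^{\rmi\theta'}, a_2 e^{-\rmi\theta'})$, $\theta'\in\RR$; since multiplicative equivariance preserves zero sets, the system \eqref{tilde-N1,2} is invariant under both actions, as claimed.
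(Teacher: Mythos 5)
Your proposal is correct and takes essentially the same route as the paper: transfer the $O(2)$ symmetry of the perturbation equations through the reduction, identify the induced action on $(a_1,a_2)$ via Lemma~\ref{Sig-GS} and \eqref{R-theta-w} as the swap and the rotation $(z_1,z_2)\to(z_1e^{\rmi\ale\theta},z_2e^{-\rmi\ale\theta})$, and rescale $\theta\to\theta/\ale$. The paper's own proof is terser, essentially asserting that the reduced system inherits these actions; your additional care with the equivariance of the commuting projectors, the displacement map, and the solution map $\bz$ (equivariant choice of the right inverse and of the kernel basis $h_j(\eps)$, with the symmetries acting trivially on the parameters $b$) fills in precisely the step the paper leaves implicit.
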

\begin{proof}
We recall that the system \eqref{pert-sys} is invariant under the symmetry $S$ and the non-degenerate rotation group $\{R(\theta)\}_{\theta\in\RR}$. Therefore, the group actions of $S$ and the $C_0$-group $\{R(\theta)\}_{\theta\in\RR}$ are inherited on the eigenspaces $\Sigma_\pm(\eps)$ associated to the crossing eigenvalues $\lambda_\pm(\eps)$. From Lemma~\ref{Sig-GS} and since $S$ is a symmetry, we infer that the group action of $S$ on the eigenspaces $\Sigma_\pm(\eps)$ is isomorphic to the transformation $\tilde{S}:\CC^2\to\CC^2$ defined by $\tilde{S}(z_1,z_2)=(z_2,z_1)$. Using Lemma~\ref{Sig-GS} and \eqref{R-theta-w}, we conclude that for any $\theta\in\RR$ the group action of $R(\theta)$ on the eigenspaces $\Sigma_\pm(\eps)$
 is isomorphic to the transformation $\tilde{R}(\theta):\CC^2\to\CC^2$ defined by $\tilde{R}(\theta)(z_1,z_2)=(z_1e^{\rmi\ale \theta}, z_2e^{-\rmi\ale\theta})$. It follows that the system \eqref{tilde-N1,2} is invariant in the variables $(a_1,a_2)\in\CC^2$ under the transformations $\tilde{S}$ and $\tilde{R}(\theta)$ for all $\theta\in\RR$. Making the change of variables $\theta\to\frac{\theta}{\ale}$, the lemma follows shortly.
\end{proof}
It was shown in \cite[Rmk 13]{TZ3} that one can easily improve the $C^1$-regularity of the map $\bz$ by choosing the spaces $X_j$, j=1,2 (defined in \eqref{def-X1B1} and \eqref{def-X2B2}), appropriately.
\begin{remark}\label{smooth-inv}
By choosing the space $X_1=H_\eta^4(\RR,\CC)=H^4(\R,\C^n;e^{\eta(1+|x_1|^2)^{1/2}}dx_1)$ and
$X_2=\partial_{x_1}H_{2\eta}^1(\R,\C^n)\cap X_1$, one can use the same analysis from \cite{TZ2,TZ3} to prove that $\bz$ is of class $C^2$. More generally,  we can strengthen the result by proving that the map $\bz$ is of class $C^m$ where $\nu=2m+1$ in (H0). Since $\nu\geq 5$, we have that $\bz$ is (at least) of class $C^3$  from $\CC^2\times (-\delta,\delta)\times [T_0,T_1]\times\RR^\ell$ to $\CalB_1$. Therefore, we conclude that the functions $\tilde{\bf N}_j$, $j=1,2$, are of class (at least) $C^3$ from $B_\CC(0,\delta)^2\times(-\delta,\delta)^{2+\ell}$ to $\CC$.
\end{remark}
In the next lemma we exploit the fact that $\tilde{\bf N}_j$, $j=1,2$, are of class $C^3$ by expanding these functions to cubic order.
\begin{lemma}\label{l4.5}
Under Hypotheses (A1)--(A3), (B1), (H0)--(H4) and $(D_\eps)$, there exists $\delta>0$ small enough, two non-zero real constants $\varkappa,\chi\ne 0$ and
smooth functions $\Lambda,\Gamma:(-\delta,\delta)^\ell\to\CC$ such that
\begin{align}\label{structure}
\tilde{\bf N}_1(a_1,a_2,\eps,\mu,b_1,\dots,b_\ell)&=a_1\Bigl(\varkappa\eps+
\rmi\chi\mu
+\Lambda(b_1,\dots,b_\ell)|a_1|^2+\Gamma(b_1,\dots,b_\ell)|a_2|^2\Bigr)+\mathcal{O}(4),\nonumber\\
\tilde{\bf N}_2(a_1,a_2,\eps,\mu,b_1,\dots,b_\ell)&=a_2\Bigl(\varkappa\eps
+
\rmi\chi\mu
+\Gamma(b_1,\dots,b_\ell)|a_1|^2+\Lambda(b_1,\dots,b_\ell)|a_2|^2\Bigr)+\mathcal{O}(4).
\end{align}
\end{lemma}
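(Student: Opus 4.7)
\emph{Plan.} The strategy is to Taylor-expand $\tilde{\bf N}_1, \tilde{\bf N}_2$ to cubic order in $(a_1, \bar a_1, a_2, \bar a_2)$ with coefficients smooth in $(\eps, \mu, b)$ -- which is legitimate by Remark \ref{smooth-inv} -- and then use the $O(2)$-equivariance of Lemma \ref{l4.3} together with an approximate $S^1$ (time-averaging) argument to whittle the admissible monomials down to those in \eqref{structure}. For the linear-in-$(a_1,a_2)$ part, Remark \ref{r4.2} and Lemma \ref{N-estm} show that the integral piece in \eqref{def-disp1} is at least quadratic in $(a_1, a_2, \tbv_0)$ while $\bz$ is at least linear in $(a_1,a_2)$, so the linear part of $\tilde{\bf N}_1$ reduces to $\bigl(e^{T_*(\eps)(1+\mu)\lambda_+(\eps)} - 1\bigr) a_1$. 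Using $T_*(\eps)\omega(\eps) = 2\pi k_*$, this factors as $\bigl(e^{(1+\mu)T_*(\eps)\gamma(\eps)} e^{2\pi i k_* \mu} - 1\bigr)a_1$; expansion in $(\eps,\mu)$ yields $(\varkappa\eps + i\chi\mu)a_1$ plus higher-order corrections, with $\varkappa = T_*(0)\gamma'(0) > 0$ and $\chi = 2\pi k_* \ne 0$ both nonzero by $(D_\eps)$.

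\emph{Cubic part, $O(2)$ reduction.} Assign ``rotation charge'' $+1$ to $a_1$ and $\bar a_2$ and $-1$ to $\bar a_1$ and $a_2$, consistent with the $R(\theta)$-action of Remark \ref{new-3.5} after the $\theta\to\theta/\ale$ rescaling in Lemma \ref{l4.3}. Rotational equivariance forces every monomial in $\tilde{\bf N}_1$ to carry total charge $+1$; the only cubic monomials satisfying this are $a_1|a_1|^2$, $a_1|a_2|^2$, and $\bar a_1 \bar a_2^2$. The reflection relation $\tilde{\bf N}_2(a_1,a_2) = \tilde{\bf N}_1(a_2,a_1)$ then determines $\tilde{\bf N}_2$ from $\tilde{\bf N}_1$ and produces the symmetric roles of $\Lambda, \Gamma$ in both equations.

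\emph{Elimination of $\bar a_1 \bar a_2^2$ via approximate $S^1$.} Compute each cubic coefficient by substituting the linear flow $\phi(s) = e^{s\lambda_+(\eps)}a_1$, $\psi(s) = e^{s\lambda_+(\eps)}a_2$ and the quadratic leading term of $\bz$ (extracted from one step of the fixed-point iteration \eqref{D4-Z-eq}) into the quadratic parts of $\Phi$, then integrate against $e^{(T-s)\lambda_+(\eps)}$ over $[0, T_*(\eps)(1+\mu)]$. Both $a_1|a_1|^2$ and $a_1|a_2|^2$ carry the resonant time factor $e^{s(2\lambda_+ + \bar\lambda_+)}$, so the integrand reduces to $e^{T\lambda_+ + 2s\gamma(\eps)} = 1$ at $\eps = \mu = 0$, producing nonzero coefficients $\Lambda(b), \Gamma(b)$ with smooth $b$-dependence (inherited from the $b$-linear correction to $\bz$ in \eqref{D4-Z-eq}). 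The off-resonant monomial $\bar a_1\bar a_2^2$ carries time factor $e^{3s\bar\lambda_+}$, so at $\eps = \mu = 0$ its integral equals
\[
e^{T_*(0)\lambda_+(0)}\int_0^{T_*(0)}e^{-4 i \omega(0) s}\,ds = e^{2\pi i k_*}\cdot\frac{e^{-8\pi i k_*} - 1}{-4 i \omega(0)} = 0,
\]
by $T_*(0)\omega(0) = 2\pi k_*$; hence its coefficient is $\mathcal{O}(\eps) + \mathcal{O}(\mu)$ and absorbs into the $\mathcal{O}(4)$ remainder under the standard bifurcation weighting.

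\emph{Main obstacle.} The technical core is the Duhamel bookkeeping in the last step: one must expand \eqref{pert-sys4} systematically to cubic order, extract the quadratic leading term of $\bz$ from \eqref{D4-Z-eq} via $(I - e^{T\tL(\eps)})_{|\CalX_2}^\dagger$ acting on quadratic data, and verify that every time factor other than the two resonant ones integrates to zero at $\eps=\mu=0$. This is the precise content of the approximate $S^1$ symmetry: the linear flow on $\Sigma_+(\eps)\oplus\Sigma_-(\eps)$ at $\eps = 0$ is exactly $T_*(0)$-periodic, so integration over one period annihilates every non-resonant monomial -- equivalently, imposes the extra invariance under the common phase rotation $(a_1, a_2)\mapsto(e^{i\beta}a_1, e^{i\beta}a_2)$, which together with $O(2)$ eliminates precisely $\bar a_1 \bar a_2^2$.
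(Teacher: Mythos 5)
There is a genuine gap in your $O(2)$ reduction step, and it matters. Assigning rotation charge $+1$ to $a_1,\bar a_2$ and $-1$ to $\bar a_1,a_2$, the cubic monomials of total charge $+1$ are \emph{six}, not three: besides $a_1|a_1|^2$, $a_1|a_2|^2$ and $\bar a_1\bar a_2^{\,2}$, rotational equivariance also allows $a_1^2a_2$, $\bar a_2|a_1|^2$ and $\bar a_2|a_2|^2$ (each has charge $+1$). This is exactly why the paper's proof of Lemma~\ref{l4.5} writes the general equivariant cubic expansion with coefficients $\Lambda_1,\Gamma_1,\Upsilon_1,\Lambda_2,\Gamma_2,\Upsilon_2$ and then must prove $\Upsilon_1=\Lambda_2=\Upsilon_2=\Gamma_2=0$. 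Because you claim that only three monomials survive $O(2)$, your argument never eliminates $a_1^2a_2$, $\bar a_2|a_1|^2$, $\bar a_2|a_2|^2$ (and their mirrors in $\tilde{\bf N}_2$); if any of these were present, the normal form \eqref{structure} — and the subsequent bifurcation analysis based on it — would fail. These three terms are precisely the ones that are \emph{not} invariant under the common phase rotation $(a_1,a_2)\mapsto(e^{\rmi\beta}a_1,e^{\rmi\beta}a_2)$, so they can only be removed by the approximate $S^1$ argument, not by $O(2)$ alone.

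The repair is available within your own framework: at $\eps=\mu=0$ these monomials carry the non-resonant time factors $e^{2\rmi\omega(0)s}$ and $e^{-2\rmi\omega(0)s}$, whose integrals over $[0,T_*(0)]$ vanish just like the one you computed for $\bar a_1\bar a_2^{\,2}$, so the same time-averaging kills all four non-$S^1$-invariant coefficients at once. Note, though, that the paper handles this step by a softer invariance argument (reflection-symmetric solutions $a_1\equiv a_2$ must be preserved under the approximate common rotation induced by the underlying evolution, forcing the four extra coefficients to vanish), which avoids the Duhamel bookkeeping you flag as the main obstacle; your explicit resonance computation only treats the leading contribution, and the pieces routed through $\bz$ and through quadratic self-corrections of $(\phi,\psi)$ do not carry pure phase factors, so "every non-resonant time factor integrates to zero" is not literally how those contributions are controlled. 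Your linear part ($\varkappa=\gamma'(0)T_*(0)$, $\chi=2\pi k_*$ from \eqref{lead-term-Disp}) matches the paper. One minor point: the lemma does not assert $\Lambda,\Gamma\neq 0$, and your claim that the resonant integrals produce nonzero coefficients is neither needed nor justified — nonvanishing conditions are imposed separately in Hypothesis~\ref{genericity}.
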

\begin{proof}
To find an expansion for the functions $\tilde{\bf N}_j$, $j=1,2$, we use the definition of the functions
$\mathrm{\bf Disp}_j$, $j=1,2$ given in \eqref{def-disp1} and \eqref{def-disp2}  and the invariance properties of the system \eqref{tilde-N1,2} proved in Lemma~\ref{l4.3}. From Lemma~\ref{N-estm},
\eqref{kappa-0}, \eqref{pa-varkappa} and the substitution \eqref{subst-v,T} it follows that the leading order terms in $\eps$ and $\mu$ are obtained from the leading order terms of $\mathrm{\bf Disp}_j$, $j=1,2$, namely
\begin{equation}\label{lead-term-Disp}
\Bigl(e^{T_*(\eps)(1+\mu)\lambda_+(\eps)}-1\Bigr)a_j=\Bigl(e^{T_*(\eps)(1+\mu)\gamma(\eps)+2k_*\pi\rmi\mu}-1\Bigr)a_j,\quad j=1,2.
\end{equation}
Taking $\varkappa=\gamma'(0)T_*(0)=\frac{2k_*\pi\gamma'(0)}{\omega(0)}\ne 0$ (by Hypothesis $(D_\eps)$) and $\chi=2k_*\pi\ne 0$, we infer that there exist smooth functions $\Lambda_j, \Gamma_j, \Upsilon_j:(-\delta,\delta)^\ell\to\RR$ such that
\begin{align*}
&\tilde{\bf N}_1(a_1,a_2,\eps,\mu,b_1,\dots,b_\ell)=a_1\Bigl(\varkappa\eps+
\rmi\chi\mu
+\Lambda_1(b_1,\dots,b_\ell)|a_1|^2+\Upsilon_1(b_1,\dots,b_\ell)a_1a_2\nonumber\\
&+\Gamma_1(b_1,\dots,b_\ell)|a_2|^2\Bigr)+\overline{a_2}\Bigl(
+\Lambda_2(b_1,\dots,b_\ell)|a_1|^2+\Upsilon_2(b_1,\dots,b_\ell)\overline{a_1}\overline{a_2}+\Gamma_2(b_1,\dots,b_\ell)|a_2|^2
\Bigr)+\mathcal{O}(4),\nonumber\\
&\tilde{\bf N}_2(a_1,a_2,\eps,\mu,b_1,\dots,b_\ell)=a_2\Bigl(\varkappa\eps
+
\rmi\chi\mu
+\Gamma_1(b_1,\dots,b_\ell)|a_1|^2+\Upsilon_1(b_1,\dots,b_\ell)a_1a_2\nonumber\\
&+\Lambda_1(b_1,\dots,b_\ell)|a_2|^2\Bigr)
+\overline{a_1}\Bigl(
\Gamma_2(b_1,\dots,b_\ell)|a_1|^2+\Upsilon_2(b_1,\dots,b_\ell)\overline{a_1}\overline{a_2}+\Lambda_2(b_1,\dots,b_\ell)|a_2|^2
\Bigr)
+\mathcal{O}(4).
\end{align*}
Noting that there always exist reflectionally symmetric solutions
$a_1\equiv a_2$, for which the situation reduces to that of a standard
Hopf bifurcation, recalling that the system originates from a rotating evolutionary system in which $a_1$, $a_2$ rotate in a common direction with
common (to linear order) speed $\omega(\eps)$, and noting that periodic solutions are preserved under influence of the flow, we infer that $\Upsilon_1=\Lambda_2=\Upsilon_2=\Gamma_2=0$ must hold.
For, otherwise it is easy to see that solutions $a_1\equiv a_2$ are not preserved under an approximate common rotation. Dropping the index of the functions $\Lambda_1$ and $\Gamma_1$, the lemma follows immediately.
\end{proof}
To finish the proof of our main result we introduce the following genericity assumption:
\begin{hypothesis}\label{genericity}
If $\Lambda,\Gamma:(-\delta,\delta)^\ell\to\CC$ are the functions from expansion \eqref{structure}, we assume that there exists $\delta>0$ small enough such that
\begin{equation}\label{geneq}
\Lambda\ne\Gamma,\quad\mathrm{Re}(\Lambda+\Gamma)\ne0,\quad \mathrm{Re}\Lambda\ne 0.
\end{equation}
\end{hypothesis}

\begin{proof}[Proof of Theorem~\ref{main-result}]
From Lemma~\ref{l4.1} it follows that to prove the theorem it is enough to solve the finite-dimensional system \eqref{tilde-N1,2}.
Let $\bb=(b_1,\dots,b_\ell)\in(-\delta,\delta)^\ell$ with $|\bb|\leq C|(a_1,a_2)|$
for some constant $C>0$.
Making the substitution
\begin{equation}\label{rho-substitution}
a_1=a,\quad a_2=\rho a,\quad a\in\CC\quad\mbox{and}\quad \rho\in\CC\cup\{\infty\}.
\end{equation}
in \eqref{tilde-N1,2} and using the results from Lemma~\ref{l4.5}, we obtain the equivalent system:
\begin{equation}\label{new-system-bif}
\left\{\begin{array}{ll}
a\Bigl(\varkappa\eps+\rmi\chi\mu+\Lambda(\bb)|a|^2+\Gamma(\bb)|a|^2|\rho|^2\Bigr)+\mathcal{O}(4)=0\\
a\rho\Bigl(\varkappa\eps+\rmi\chi\mu+\Gamma(\bb)|a|^2+\Lambda(\bb)|a|^2|\rho|^2\Bigr)+\mathcal{O}(4)=0
\end{array}\right..
\end{equation}
There are three cases of interest: when $a\ne 0$ and $\rho$ is bounded and bounded away from $0$ and when $|\rho|<<1$ or $|\rho|>>1$.
We are going to treat all of these cases separately.

\noindent{\bf Case 1.} $a\ne 0$ and there exists $C>0$ such that $\frac{1}{C}\leq |\rho|\leq C$.

\noindent Multiplying the first equation of the system \eqref{new-system-bif} by $\rho$ and subtracting it from the second equation, we obtain the equation:
\begin{equation*}
a\rho\Bigl((\Lambda(\bb)-\Gamma(\bb)\Bigr)|a|^2(1-|\rho|^2)+\mathcal{O}(4)=0.
\end{equation*}
Since in this case $\rho$ is bounded and bounded away from $0$ we can divide this equation by $a\rho|a|^2$ to obtain the equation:
\begin{equation}\label{proof-1}
\bigl((\Lambda(\bb)-\Gamma(\bb)\bigr)(1-|\rho|^2)+\mathcal{O}(a)=0.
\end{equation}
Since by Hypothesis~\ref{genericity} we have that $\Lambda(\bb)\ne\Gamma(\bb)$ from \eqref{proof-1} we infer that
$|\rho|=1+\mathcal{O}(a)$. Substituting back in the first equation of \eqref{new-system-bif} and using that $a\ne 0$ we obtain that
\be\label{proof-2}
\varkappa\eps+\rmi\chi\mu+\bigl(\Lambda(\bb)+\Gamma(\bb)\bigr)|a|^2+\mathcal{O}(a^3)=0.
\ee
Depending on model parameters (specifically, the relative signs of $\varkappa$
and $\Re(\Lambda+\Gamma)$), this will occur for $\eps$ positive (supercritical case) or negative (subcritical case).
Taking the real part in the above equation and since $\varkappa\ne 0$ by Lemma~\ref{l4.5} and $\mathrm{Re}(\Lambda(\bb)+\Gamma(\bb))\ne 0$ by Hypothesis~\ref{genericity}, we infer that $|a|=\mathcal{O}(|\eps|^{1/2})$. From Lemma~\ref{l4.5} we have that $\chi\ne0$, therefore by taking imaginary part in \eqref{proof-2} we conclude that $\mu=\mathcal{O}(\eps)$.

\noindent{\bf Case 2.} $a\ne 0$, $|\rho|<<1$.

\noindent From the first equation of \eqref{new-system-bif}, by using the fact that $\rho$ is small, we obtain the equation
\be\label{proof-3}
\varkappa\eps+\rmi\chi\mu+\bigl(\Lambda(\bb)+|\rho|^2\Gamma(\bb)\bigr)|a|^2+\mathcal{O}(a^3)=0.
\ee
Since $\mathrm{Re}\Lambda(\bb)\ne 0$, by Hypothesis~\ref{genericity}, we have that $\mathrm{Re}\bigl(\Lambda(\bb)+|\rho|^2\Gamma(\bb)\bigr)\ne 0$, by continuity. Taking real and imaginary part in \eqref{proof-3} and using that $\varkappa,\chi\ne0$, we infer that $|a|=\mathcal{O}(|\eps|^{1/2})$ and $\mu=\mathcal{O}(\eps)$.

\noindent{\bf Case 3.} $a\ne 0$, $|\rho|>>1$.

\noindent From the second equation of \eqref{new-system-bif} and since $1/\rho$ is small, we obtain the equation
\be\label{proof-4}
\varkappa\eps+\rmi\chi\mu+\Bigl(\Lambda(\bb)+\frac{1}{|\rho|^2}\Gamma(\bb)\Bigr)|a|^2+\mathcal{O}(a^3)=0.
\ee
Using Hypothesis~\ref{genericity} again, we have that $\mathrm{Re}\Bigl(\Lambda(\bb)+\frac{1}{|\rho|^2}\Gamma(\bb)\Bigr)\ne 0$, by continuity. Arguing just as in the previous cases, by taking real and imaginary part in \eqref{proof-4}, we conclude again that $|a|=\mathcal{O}(|\eps|^{1/2})$ and $\mu=\mathcal{O}(\eps)$.
Summarizing, we conclude that any
solution of \eqref{tilde-N1,2} satisfies one of the conditions
\begin{align}\label{proof-5}
(i)\; |a_1|&=\mathcal{O}(|\eps|^{1/2})\;\mbox{and}\;a_2= e^{\rmi\theta } a_1+\mathcal{O}(a_1^2),\nonumber\\
\;(ii)\;
|a_1|=\mathcal{O}(|\eps|^{1/2})&\;\mbox{and}\;|a_2|<<|a_1|\;\;\;(iii)\; |a_2|=\mathcal{O}(|\eps|^{1/2})\;\mbox{ and}\;|a_1|<<|a_2|.
\end{align}
Moreover, whenever we have a solution of \eqref{tilde-N1,2} we have that $\mu=\mathcal{O}(\eps)$.
Making the change of variables
\be\label{tilda-substitution}
\ta_1=\frac{a_1}{|\eps|^{1/2}},\quad \ta_2=\frac{a_2}{|\eps|^{1/2}},\quad\tmu=\frac{\mu}{\eps}
\ee
in \eqref{tilde-N1,2} we obtain the system:
\begin{equation}\label{new-system-bif2}
\left\{\begin{array}{ll}
\ta_1\Bigl(\varkappa\sgn(\eps)+\rmi\chi\tmu+\Lambda(\bb)|\ta_1|^2+\Gamma(\bb)|\ta_2|^2\Bigr)+\mathcal{O}(|\eps|^{1/2})=0\\
\ta_2\Bigl(\varkappa\sgn(\eps)+\rmi\chi\tmu+\Gamma(\bb)|\ta_1|^2+\Lambda(\bb)|\ta_2|^2\Bigr)+\mathcal{O}(|\eps|^{1/2})=0
\end{array}\right..
\end{equation}
Since the functions $\tilde{\bf N}_j$, $j=1,2$ are of class $C^3$, by Remark~\ref{smooth-inv}, it follows that $\mathcal{O}(|\eps|^{1/2})$ terms in \eqref{new-system-bif2} are of $C^3$ class in $\ta_1,\ta_2$ and $\tmu$.  Since system \eqref{new-system-bif2} is equivalent to the system \eqref{tilde-N1,2} which is rotationally invariant by Lemma~\ref{l4.3},
it inherits the property of rotational invariance, and in addition the property that
$(\tilde a_1, \tilde a_2)=(0,0)$ is always a solution.

Setting $\eps=0$ in \eqref{new-system-bif2},
we reduce to (a rescaled version of)
the truncated cubic system discussed in \S \ref{dtreat},
which has under nondegeneracy assumptions \eqref{geneq}
precisely four families of solutions:
$$
(\tilde a_1,\tilde a_2)= (0,0), \; (a_*,0), \; (0,a_*),
\;(a_{\natural},e^{\rmi\theta }a_{\natural}),
$$
$\theta \in \R$, of which the
last three are nontrivial equilibria
bifurcating from the first, zero equilibrium.
Depending on model parameters, these will occur variously
for $\eps>0$ (supercritical case) or $\eps<0$ (subcritical case),
according to the rule
$\sgn(\eps) = -(\Re \Lambda(\bb) |\tilde a_1|^2 + \Re \Gamma(\bb) |\tilde a_2|^2)/\varkappa$, where
the righthand side is, variously,
$-(\Re \Lambda(\bb) + \Re \Gamma(\bb))a_*^2/\varkappa$,   $-(\Re \Lambda(\bb))/\varkappa$,  or  $-(\Re
\Gamma(\bb)/\varkappa$.
Fixing $\bb=0$, we find by a combination of Implicit Function arguments and
symmetry considerations that each of these families continues uniquely
under small perturbations in $|\eps|^{1/2}$, to give the claimed exact solutions
of the full system obtained by Lyapunov--Schmidt reduction
using the choice of right inverse corresponding to $\bb=0$.
These computations are carried out in Appendix \ref{sec:jacobian}.

It remains to deal with the $\ell$-fold indeterminacy associated with
parameter $\bb$, induced by the $\ell$-fold kernel of the one-dimensional
linearized operator $L_0(\eps)$.
This may be accounted for as follows,
using the method introduced for that purpose in \cite{TZ2}.
First, we make the change of variables
$\tilde{b}_j=\frac{b_j}{|(a_1,a_2)|}$, $j=1,2$, ensuring for fixed
$\tilde \bb$ that $|\bb|\leq C|(a_1,a_2)|$ as required for our arguments above;
by the same arguments, we obtain thereby a unique family of solutions perturbing
from the $\eps=0$ solutions for each choice of $\tilde \bb$, thus obtaining
an $\ell$-dimensional cone of distinct solutions above each solution for
$\bb=0$, which are the unique solutions lying
within the cone $\{(a_1,a_2,\tbv_0):
\|\tbv_0\|_{\CalX_1}\leq C|(a_1,a_2)|\}$, for some $C>0$;
moreover, these cones may be extended to smooth $\ell$-dimensional
manifolds of solutions by perturbing about different choices of background wave in the $\ell$-dimensional manifold of stationary shock solutions with the
same endpoints, thus obtaining a family of nearby problems to which the same arguments uniformly apply.
Finally, applying
\cite[Prop. 2.20]{TZ2} we find that {\it any} periodic solution of \eqref{int-pert-sys} can be shifted by such a change of coordinates so as
to originate in the cone $\{(a_1,a_2,\tbv_0):
\|\tbv_0\|_{\CalX_1}\leq C|(a_1,a_2)|\}$, for some such nearby
problem. Thus, we can infer uniqueness of the full $\ell$-parameter families
of solutions as in
\cite[Cor. 2.21]{TZ2}.
%

{\bf Characterization as traveling waves.}
Finally, we verify in the Lax case, $\ell=1$, that ``traveling-wave'' type
solutions close to $(\tilde a_1,\tilde a_2)= (a_*,0)$
or $(0,a_*)$ are indeed traveling waves with respect to $x_2$.
This may be seen by the computation carried out in Appendix \ref{sec:jacobian}
showing that these types of solutions are unique up to rotational invariance,
i.e., up to translation with respect to $x_2$, for each fixed $\bb$.
On the other hand, time-translates of any such solution give a nearby periodic solution (specifically, nearby in rescaled coordinates), which must therefore be a ``traveling-type'' solution for some nearby choice of $\bb$.
Recalling, for the Lax case $\ell=1$, that change in $\bb$ corresponds
simply to translation in $x_1$, we find therefore that time-translates of
``traveling-type'' solutions correspond to translates in $x=(x_1,x_2)$,
from which it is readily seen (by substitution in the original pde)
that they are traveling waves $h^\eps(x_1-ct,x_2-dt)$ in  $x_1$ and $x_2$.
But, time-periodicity, plus the fact that the background standing shocks
by our assumptions are not periodic in $x_1$, implies that
$c$
must vanish, leaving the conclusion that the
solution must be a traveling wave in $x_2$ alone.
\end{proof}
\begin{remark}\label{r4.6}
The above displacement map argument substitutes in our setting
for the standard approaches used to treat $O(2)$ Hopf bifurcation
in other settings, namely,
the center-manifold/normal forms approach, not available to us because in absence of spectral gap we have no readily available center manifold (possibly even nonexisting as far as we know);
and the spatial dynamics approach,
or Lyapunov-Schmidt reduction of the evolution equations recast
the in the class of time-periodic functions, as described for example
in \cite{GSS}, again not applicable to our framework
based on the time-$T$ map and the ``reverse temporal dynamics'' approach of \cite{TZ3}.
Both of these standard approaches rely on $O(2)\times S^1$ symmetry, with the
additional $S^1$ symmetry imposed, respectively, by normal form reduction
and time-periodicity.
It is interesting that we do not need full $S^1$ symmetry in our argument,
substituting similar but less detailed information coming from the origins of the time-$T$ map as an evolution problem to obtain the key property $\Upsilon_1=\Lambda_2=\Upsilon_2=\Gamma_2=0$.
Though we do not treat it in our analysis, spectral stability information for the bifurcating waves
should in principle be readily available from the reduced
time-$T$ solution map \eqref{tilde-N1,2} with $\eps,\mu$ held fixed.
\end{remark}

\section{Gas dynamics and MHD}\label{s:gas}
Finally, we comment briefly on the cases of gas dynamics or MHD,
to which our analysis does not apply.
In Eulerian coordinates, the 2-D
compressible Navier--Stokes equations are \cite{Ba,Da,Sm}:
\ba\label{eq:ns}
\rho_t+ (\rho u)_x +(\rho v)_y&=0,\\
(\rho u)_t+ (\rho u^2+p)_x + (\rho uv)_y&=
(2\mu +\eta) u_{xx}+ \mu u_{yy} +(\mu+ \eta )v_{xy},\\
(\rho v)_t+ (\rho uv)_x + (\rho v^2+p)_y&=
\mu v_{xx}+ (2\mu +\eta) v_{yy} + (\mu+\eta) u_{yx},\\
(\rho E)_t+ (\rho uE+up)_x + (\rho vE+vp)_y
&=\Big( \kappa T_x + (2\mu+\eta)uu_x + \mu v(v_x+u_y) + \eta uv_y\Big)_x \\
&\quad +\Big( \kappa T_y+ (2\mu+\eta)vv_y + \mu u(v_x+u_y) + \eta vu_x\Big)_y,
\ea
where $\rho$ is density, $u$ and $v$ are the fluid velocities in $x$ and $y$
directions, $p$ is pressure, $T$ is temperature,
$E=e+\frac{u^2}{2} +\frac{v^2}{2}$
is specific energy,
$e$ is specific internal  energy,
$\frac{u^2}{2} +\frac{v^2}{2}$ is kinetic energy,
and the constants $\mu>|\eta|\ge0$ and $\kappa>0$
are coefficients of first (``dynamic'')
and second viscosity and heat conductivity.
The equations are closed by equations of state
\begin{equation}\label{eq:general_eos}
p=p(\rho,T),\quad e=e(\rho,T).
\end{equation}

Here, $x$ and $y$, are coordinates in a rest frame, and $t\ge 0$
is time.
For common fluids and gases in normal conditions,
the polytropic gas laws $p=\Gamma\rho e$, $e=C_v T$
give a good fit to experimental observations,
where $\Gamma>0$ and $C_v>0$ are constants depending on the gas
\cite{Ba}.
The MHD equations feature an additional coupling to a magnetic field vector;
our discussion applies also in that case.

\subsection{Lagrangian formulation}
Equations \eqref{eq:ns} may be converted to Lagrangian coordinates
as follows \cite{A,Da,DM,HT}.
Let $X, Y$ denote a reference configuration, and $T=t$, and
define particle paths $(x,y)(X,Y,t)$ by
$\partial_t (x,y)= (u,v)$,
$(x,y)|_{t=0}=(x_0,y_0)(X,Y)$ for some choice of $(x_0,y_0)(\cdot)$
Denote by $\chi_{jk}$ the entries of
$$
\Omega:=\frac{\partial (x,y)}{\partial (X,Y)}:=\bp \frac{\partial x}{\partial X} & \frac{\partial x}{\partial Y}\\
\frac{\partial y}{\partial X} & \frac{\partial y}{\partial Y} \ep.
$$
A great simplification is obtained if one can choose
$(x_0,y_0)$ so that $\det \Omega|_{t=0}\equiv (1/\rho)|_{t=0}$,
as can always be done for $L^1_{\mathrm{loc}}$ data in 1-D or an
$L^1$ multi-D perturbation thereof- in particular for the perturbed planar shocks considered here.
In this case, the values $(x,y)(X,Y,t)$ may be considered as deformations of an initially uniform-density rest configuration, and equations \eqref{eq:ns}
reduce to a special case of thermoviscoelasticity, with $e$ depending
only on $\tau:=\det\Omega=1/\rho$ and entropy $S$:
\ba\label{eq:tvisc}
(\Omega_{11})_t- u_X&=0,\\
(\Omega_{12})_t- u_Y&=0 ,\\
(\Omega_{21})_t- v_X&=0 ,\\
(\Omega_{22})_t- v_Y&=0,\\
u_t + (\Omega_{22} p)_X-(\Omega_{21} p)_Y &=  \hbox{\rm $2$nd-order derivative terms},\\
v_t -(\Omega_{12} p)_X +(\Omega_{11} p)_Y &= \hbox{\rm $2$nd-order derivative terms},\\
E_t -(\Omega_{12} vp)_X +(\Omega_{11} vp)_Y + (\Omega_{22} up)_X-(\Omega_{21} up)_Y
&= \hbox{\rm $2$nd-order derivative terms},\\
\ea
together with the compatibility conditions (preserved by time evolution):
\ba\label{dfree2}
\partial_Y \Omega_{11}-\partial_X \Omega_{12}&=0
;\quad
\partial_Y \Omega_{21}-\partial_X \Omega_{22}&=0.\\
\ea
Here, we have omitted the description of the
complicated (but divergence-form) righthand sides involving second-order derivative terms involving transport effects, not needed
for our discussion.

\subsection{Coordinate-ambiguity}\label{s:ambiguity}
The augmented system approach followed in this paper
works for the difficult case of thermoviscoelasticity, {\it but it does not work for the apparently simpler case of gas dynamics.}
The reason: for gas dynamics, the ``contingent'' entropy of the enlarged system is only nonstrictly convex and the equations fail to be symmetric, etc.
At an operational level, this is because
there is no penalty on shear strains, giving neutral directions in the
associated entropy function.
In fact, the problem is deeper than that- it reflects the fact that Lagrangian gas dynamic equations have an infinite-dimensional family of invariances consisting of all volume preserving maps of the spatial coordinate.  (Since pressure depends only on
density= reciprocal of the Jacobian of the deformation map, there can be
no change under Jacobian-preserving transformations.)

This massive ambiguity in Lagrangian coordinatization means that there is a corresponding infinite-dimensional family of neutral perturbations in the stress tensor $\Omega$ that do not decay to zero time-asymptotically, but remain constant
without affecting the evolution of other, gas-dynamical, variables, and,
as a consequence, there can be no coordinate system, extended or otherwise,
in which perturbations of $\Omega$ decay.
Thus, the strategy followed here will fail; a side-consequence is that gas-dynamical shocks are {\it never} asymptotically orbitally stable in Lagrangian
coordinates, a fundamental distinction between Eulerian and Lagrangian formulations.

\subsection{Possible remedies}\label{s:remedies}
The treatment of gas dynamics/MHD is an important direction for future
investigation,
perhaps by ``factoring out'' invariances in linearized estimates, then trying to show periodicity modulo these invariant transformations.
Another approach might be to treat the problem instead by spatial dynamics techniques
plus the standard $O(2)$ reduction argument on the space of time-periodic functions.
Here, one must confront similar issues of regularity as faced here,
but without having time-evolutionary stability machinery as a guide.
A further idea
is to carry out a Nash-Moser type iteration in Eulerian coordinates, dealing
with loss of regularity in the quasilinear hyperbolic modes by the iteration scheme instead of re-coordinatization.

Finally, an alternative approach modifying our present method would be
to establish a ``Korn-type'' inequality showing that there exists a
volume-preserving transformation under which $|\Omega|$ is controlled by
$|\det \Omega|$ in $H^s$ norms.
For, we could then carry out our bifurcation analysis
in the usual (Eulerian) gas dynamics variables, including $\tau=\det \Omega$,
choosing an optimal Lagrangian coordinatization in order to control nonlinear
variational estimates on the time-$T$ evolution map.
However, we have been unable to establish such an inequality, and suspect that
one does not hold.
The resolution of this question seems an interesting mathematical problem in its own right.

\appendix
\section{Jacobian computations for the truncated cubic system}\label{sec:jacobian}

Relabeling slightly for notational convenience, we may write the
rescaled system \eqref{new-system-bif2} as
\ba\label{original}\left\{\begin{array}{ll}
(\varkappa\sgn(\eps)  + \rmi\chi\tmu + \Lambda |a|^2+ \Gamma |h|^2)a=|\eps|^{1/2}\Phi_1(\eps, \tmu,a,h),\\
(\varkappa\sgn(\eps) + \rmi\chi\tmu + \Lambda |h|^2+ \Gamma |a|^2)h=|\eps|^{1/2} \Phi_2(\eps, \tmu,a,h),\\
\end{array}\right.\ea
where $\varkappa,\chi$ is constant, $\tmu$ is real, $a$ and $h$ complex, and
$\Phi_j\in \CC$ are $C^1$ functions, taking
without loss of generality (using rotational invariance)
$a=(a_1,0)$, $h=(h_1,h_2)$
and assuming the genericity conditions
\begin{equation}\label{hyp}
\Lambda\ne\Gamma,\quad\mathrm{Re}(\Lambda+\Gamma)\ne0,\quad \mathrm{Re}\Lambda\ne 0.
\end{equation}

Expressed in real coordinates, these are:
\be\label{real}
\left\{\begin{array}{ll}
(\varkappa\sgn(\eps)  + \Re \Lambda |a|^2+ \Re \Gamma |h|^2)a_1=|\eps|^{1/2} \Re \Phi_1(\eps, \tmu,a,h),\\
(\chi\tmu + \Im \Lambda |a|^2+ \Im \Gamma |h|^2)a_1=|\eps|^{1/2}\Im \Phi_1(\eps, \tmu,a,h),\\
(\varkappa\sgn(\eps) +  \Re\Lambda |h|^2+ \Re \Gamma |a|^2)h_1 -
(\chi\tmu + \Im\Lambda |h|^2+ \Im\Gamma |a|^2)h_2 =|\eps|^{1/2}\Re \Phi_2(\eps, \tmu,a,h),\\
(\varkappa\sgn(\eps) +  \Re\Lambda |h|^2+ \Re \Gamma |a|^2)h_2 +
(\chi\tmu + \Im\Lambda |h|^2+ \Im\Gamma |a|^2)h_1 =|\eps|^{1/2}\Im \Phi_2(\eps, \tmu,a,h),\\
\end{array}\right.\ee

\subsection{Case $a_1\neq 0$, $h=0$}
Denoting the left-hand side of \eqref{real} by $H_1$, we find easily that
at a root $(a_1,\tmu, h_1,h_2)=(a_*,\tmu_*, 0,0)$ of \eqref{real}:
\be\label{0jacobian}
J_1:=
\det
\frac{\partial H_1}{\partial (a_1,\tmu,b_1,b_2)}|_{(a_*,\tmu_*,0,0,0)}=
\begin{vmatrix}
2(\Re\Lambda) a_*^2 & 0 & 0 & 0\\
2(\Im\Lambda) a_*^2 & \chi a_* & 0 & 0\\
0 & 0 & \Re C_* & -\Im C_*\\
0 & 0 & \Im C_* & \Re C_*\\
\end{vmatrix}
=
2\chi(\Re \Lambda)(\Lambda -\Gamma)a_*^7,
\ee
where $C_*:=\varkappa+i\chi\tmu_* + \Gamma a_*^2= (\Lambda -\Gamma)a_*^2$ since
$ \varkappa + \rmi\chi\tmu_* + \Lambda a_*^2=0$.
 Thus, under genericity assumptions $\Re \Lambda \neq 0$,
$\Lambda\neq \Gamma$, and since $\chi\ne 0$, by Lemma~\ref{l4.5}, we have $J_1\neq 0$
and we can conclude by the Implicit Function Theorem the desired existence
and uniqueness of solutions of the $|\eps|^{1/2}$-perturbed system nearby those
of the $\eps=0$ one.
The case $h_1\neq 0$, $a=0$ goes symmetrically.

\subsection{Case $h=a\neq 0$}
This case is trickier due to the obvious nonuniqueness induced by rotational invariance of the truncated cubic order
system (specifically, the additional rotational invariance in common direction, or $S^1$-symmetry).
This will require a little bit different handling.
Specifically, note that we may by rotational symmetry {\it plus invariance under forward evolution/approximate rotation}
take {\it both} $a$ and $h$ to be real, and work with three unknowns
$(a_1,h_1,\tmu)$ and only three of the equation \eqref{real}.
This is enough to give uniqueness of solutions by Implicit Function Theorem, but not existence; existence on the other hand follows by reflective symmetry guaranteeing that $a=h$ is always a solution.

Denoting the first three lines on the left-hand side of \eqref{real} by $H_2$,
we have at a root $(a_1,\tmu, h_1)=(a_\natural,\tmu_\natural, a_\natural)$ of \eqref{real}:
\be\label{ejacobian}
J_2:=
\det
\frac{\partial H_2}{\partial (a_1,\tmu,h_1)}|_{(a_\natural,\tmu_\natural, a_\natural)}=
\begin{vmatrix}
2(\Re \Lambda) a_\natural^2 & 0 & 2(\Re \Gamma) a_\natural^2  \\
2(\Im \Lambda) a_\natural^2 & \chi a_\natural & 2(\Im \Gamma) a_\natural^2 \\
2(\Re \Gamma) a_\natural^2 & 0 & 2(\Re \Lambda) a_\natural^2  \\
\end{vmatrix}
=
4\chi(\Re \Lambda+\Re \Gamma)(\Re \Lambda -\Re \Gamma) a_\natural^5.
\ee
Alternatively, denoting the first, second, and fourth lines of the left-hand side of \eqref{real} as $H_3$, we have
\be\label{ejacobian'}
J_3:=
\det
\frac{\partial H_3}{\partial (a_1,\tmu,h_1)}|_{(a_\natural,\tmu_\natural, a_\natural)}=
\begin{vmatrix}
2(\Re \Lambda) a_\natural^2 & 0 & 2(\Re \Gamma) a_\natural^2  \\
2(\Im \Lambda) a_\natural^2 & \chi a_\natural & 2(\Im \Gamma) a_\natural^2 \\
2(\Im \Gamma) a_\natural^2 & \chi a_\natural & 2(\Im \Lambda) a_\natural^2  \\
\end{vmatrix}
=
4\chi(\Re \Lambda+ \Re \Gamma)(\Im \Lambda -\Im \Gamma)  a_\natural^5.
\ee

Under genericity conditions \eqref{hyp}, one of $J_2$ or $J_3$ does not vanish,
or else $\Re(\Lambda + \Gamma)$ and $(\Lambda-\Gamma)$ would both vanish,
a contradiction.
But, either of $J_2\neq 0$ or $J_3\neq 0$ is sufficient to give uniqueness
by an application of the Implicit Function Theorem.

\br
The reason the $h=0$ case works in standard fashion is that the solution is rotationally invariant, so rotation does not induce nonuniqueness.
Note that in the $h=0$ case we get both existence- which we need, since it does not follow from invariance considerations- and uniqueness, while
for $h=a$ case we get only uniqueness--which is all we need, since existence guaranteed by reflectional/rotational symmetry (both required for that).
\er

\subsection{Case $a=h=0$}
Finally, we must treat the trivial solution $(a,h)=(0,0)$.
Passing to real and imaginary parts in \eqref{original} and denoting the left-hand side of this system by $H_4$, we find that:
\be\label{00jacobian}
J_4:=
\det
\frac{\partial H_4}{\partial (a_1,a_2,h_1,h_2)}|_{(0,0,0,0)}=
\begin{vmatrix}
\varkappa & -\chi\tmu&0&0\\
\chi\tmu & \varkappa&0&0\\
0 & 0 & \varkappa & -\chi\tmu\\
0 & 0 & \chi\tmu & \varkappa\\
\end{vmatrix}
=
(\varkappa^2+\chi^2\mu^2)^2\neq 0
\ee
for arbitrary $\tmu$, hence by the Implicit Function Theorem
there is a unique solution $(a,b)(|\eps|^{1/2}, \tmu)$
for each $(\eps, \tmu)$ near $(0,\tmu_*)$.  But, this is already accounted
for by the trivial solution $(a,b)=(0,0)$.
Thus, we can conclude again in this case the desired existence
and uniqueness of solutions of the $\eps$-perturbed system nearby those
of the $\eps=0$ one.
This completes the argument and the paper.

\end{document}